\theoremstyle{plain}
\newtheorem{thm}{Theorem}[section]
\newtheorem{proposition}[thm]{Proposition}
\newtheorem{lemma}[thm]{Lemma}
\newtheorem{corollary}[thm]{Corollary}
\theoremstyle{definition}
\newtheorem{definition}[thm]{Definition}
\newtheorem{example}[thm]{Example}
\newtheorem{remark}[thm]{Remark}
\def\Hom{\operatorname{Hom}}   \def\ri{\operatorname{ri}}
\def\Supp{\operatorname{Supp}} \def\Ann{\operatorname{Ann}}
\def\HF{\operatorname{HF}}     \def\HP{\operatorname{HP}}
\newcommand{\bbQ}{\ensuremath{\mathbb Q}}
\newcommand{\bbZ}{\ensuremath{\mathbb Z}}
\newcommand{\bbN}{\ensuremath{\mathbb N}}
\newcommand{\bbP}{\ensuremath{\mathbb P}}
\newcommand{\bbX}{\ensuremath{\mathbb X}}
\newcommand{\bbY}{\ensuremath{\mathbb Y}}
\newcommand{\bbW}{\ensuremath{\mathbb W}}
\newcommand{\fq}{\mathfrak{q}}
\newcommand{\fm}{\mathfrak{m}}
\newcommand{\fC}{\mathfrak{C}}
\newcommand{\calO}{{\mathcal{O}}}
\newcommand{\calZ}{{\mathcal{Z}}}
\begin{document}

\title[An application of Liaison theory to zero-dimensional schemes]
{An application of Liaison theory to zero-dimensional schemes}

\author{Martin Kreuzer}
\address[Martin Kreuzer]{Fakult\"{a}t f\"{u}r Informatik und Mathematik \\
Universit\"{a}t Passau, D-94030 Passau, Germany}
\email{martin.kreuzer@uni-passau.de}

\author{Tran~N.~K.~Linh}
\address[Tran~N.~K.~Linh]{Department of Mathematics,
Hue University of Education,
34 Le Loi, Hue, Vietnam}
\email{tnkhanhlinh141@gmail.com}

\author{Le~Ngoc~Long}
\address[Le~Ngoc~Long]{Fakult\"{a}t f\"{u}r Informatik und Mathematik \\
Universit\"{a}t Passau, D-94030 Passau, Germany \newline
\hspace*{.5cm} \textrm{and} Department of Mathematics,
Hue University of Education,
34 Le Loi, Hue, Vietnam}
\email{nglong16633@gmail.com}

\author{Nguyen~Chanh~Tu}
\address[Nguyen Chanh Tu]
{Center of Advanced Programs, Danang University of Technology \\
 54 Nguyen Luong Bang, Danang, Vietnam}
\email{nctudut@gmail.com}

\subjclass{Primary 13C40, 14M06, Secondary 13D40, 14N05}

\keywords{Zero-dimensional scheme, Cayley-Bacharach property,
Hilbert function, Liaison theory, Dedekind different}

\date{\today}

\dedicatory{}

\commby{Tran N.K. Linh, Le Ngoc Long, and Nguyen Chanh Tu}

\begin{abstract}
Given a 0-dimensional scheme $\bbX$ in a $n$-dimensional
projective space $\bbP^n_K$ over an arbitrary field $K$,
we use Liaison theory to characterize
the Cayley-Bacharach property of~$\bbX$.
Our result extends the result for sets of $K$-rational points
given in \cite{GKR}.
In addition, we examine and bound the Hilbert function
and regularity index of the Dedekind different of~$\bbX$
when $\bbX$ has the Cayley-Bacharach property.
\end{abstract}
\maketitle

\section{Introduction}

The theory of Liaison has been used very extensively
in the literature as a tool to study projective varieties
in the $n$-dimensional projective space $\bbP^n_K$.
The initial idea was to start with a projective variety,
and look at its residual variety in a complete intersection.
Since complete intersections are well understood in some sense,
one can get information about the variety from
its residual variety or vice versa, and so
it would be easier to pass to a ``simpler'' variety
instead of considering a complicated one.
This idea has been also generalized by allowing
links by arithmetically Gorenstein schemes
(see, e.g.,\cite{Mig}).
Currently, Liaison theory is an area of active research
\cite{BKM,Cho,DGO,FGM,GKR,KMMNP,MN,MR,PR},
and has many useful applications, for instance,
constructing interesting projective varieties \cite{BKM,MR,PR},
or computing invariants and establishing properties of
projective varieties \cite{Cho,DGO,FGM,GMN}.

In this paper we are interested in applying
the theory of Liaison to investigate
the geometrical structure of 0-dimensional subschemes
of the $n$-dimensional projective space $\bbP^n_K$ over
an arbitrary field~$K$. This approach was introduced by
Geramita \textit{et al.} \cite{GKR} 
in their study of finite sets of $K$-rational points with
the Cayley-Bacharach property. Classically,
a finite set of $K$-rational points $\bbX$ in~$\bbP^n_K$
is called a {\it Cayley-Bachrach scheme} if any hypersurface
of degree less than the regularity index of the
coordinate ring of~$\bbX$ which contains all points
of~$\bbX$ but one automatically contains the last point.
One of main results of~\cite{GKR} is stated as follows:

\begin{thm} \label{thmSec1.1}
Let $\bbW$ be a set of points in $\bbP^n_K$
which is a complete intersection, let $\bbX\subseteq \bbW$,
let $\bbY=\bbW\setminus\bbX$, and let
$I_\bbW$, $I_\bbX$ and $I_\bbY$ denote the homogeneous
vanishing ideals of $\bbW,\bbX$ and $\bbY$
in $P=K[X_0,...,X_n]$, respectively.
Set $\alpha_{\bbY/\bbW} =
\min\{i\in\bbN \mid (I_\bbY/I_\bbW)_i\ne\langle0\rangle\}$.
Then the following conditions are equivalent.
\begin{enumerate}
  \item[(a)] $\bbX$ is a Cayley-Bachrach scheme.
  \item[(b)] A generic element of $(I_\bbY)_{\alpha_{\bbY/\bbW}}$
  does not vanish at any point of~$\bbX$.
  \item[(c)] We have $I_\bbW:(I_\bbY)_{\alpha_{\bbY/\bbW}} = I_\bbX$.
\end{enumerate}
\end{thm}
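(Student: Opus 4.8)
\emph{Overview.} The plan is to prove the two equivalences (b)~$\Leftrightarrow$~(c) and (a)~$\Leftrightarrow$~(b); the first is essentially formal, while the second carries the liaison-theoretic content. Throughout I will write $r_\bbX$ and $r_\bbW$ for the regularity indices of $P/I_\bbX$ and $P/I_\bbW$, put $s=\deg\bbX$, and abbreviate $\alpha=\alpha_{\bbY/\bbW}$. I will assume $K$ infinite, so that ``generic'' in~(b) has its naive meaning; over a finite field one first extends scalars, which is harmless since $I_\bbW$, $I_\bbX$, $I_\bbY$ and the colon ideals occurring below are all compatible with flat base change. I will use two standing facts. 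First, since $\bbW$ is the disjoint union of $\bbX$ and $\bbY$, we have $I_\bbW=I_\bbX\cap I_\bbY$. Second, since $\bbW$ is a complete intersection it is arithmetically Gorenstein, so $\bbX$ and $\bbY$ are geometrically linked, giving $I_\bbW:I_\bbY=I_\bbX$ and $I_\bbW:I_\bbX=I_\bbY$. I will also invoke the standard Hilbert-function formula for zero-dimensional schemes linked inside $\bbW$, namely
\[
  \dim_K(I_\bbY/I_\bbW)_i \;=\; s-\HF_\bbX(r_\bbW-1-i)\qquad\text{for all } i\in\bbZ,
\]
which follows from the self-duality of the Artinian reduction of $P/I_\bbW$ (equivalently, from the identification of $I_\bbY/I_\bbW$ with an appropriate twist of the canonical module $\omega_{P/I_\bbX}$); in particular $\alpha=r_\bbW-r_\bbX$.

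\emph{(b)~$\Leftrightarrow$~(c).} For $F\in(I_\bbY)_\alpha$ I will set $\bbX_F=\{P\in\bbX\mid F(P)\ne0\}$. Since $F\in I_\bbY$ one has $I_\bbW:F=(I_\bbX\cap I_\bbY):F=I_\bbX:F$, and as $\bbX$ is reduced this colon ideal equals $I_{\bbX_F}$; intersecting over a $K$-basis of $(I_\bbY)_\alpha$ then yields $I_\bbW:(I_\bbY)_\alpha=\bigcap_F I_{\bbX_F}=I_{\,\bigcup_F\bbX_F}$. Hence (c) is equivalent to $\bigcup_F\bbX_F=\bbX$, i.e.\ to: for every $P\in\bbX$ there is an $F\in(I_\bbY)_\alpha$ with $F(P)\ne0$. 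Over an infinite field this is in turn equivalent to~(b): the forms in $(I_\bbY)_\alpha$ vanishing at a fixed $P$ constitute the subspace $(I_{\bbY\cup\{P\}})_\alpha$, which is proper precisely when the displayed condition holds at $P$, and a vector space over an infinite field is never a finite union of proper subspaces.

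\emph{(a)~$\Leftrightarrow$~(b).} I would first recast the Cayley-Bacharach property. From $I_\bbX=I_{\bbX\setminus\{P\}}\cap I_P$ one gets, for $i\ge0$, the identity $\HF_\bbX(i)-\HF_{\bbX\setminus\{P\}}(i)=1-\HF_{P/(I_{\bbX\setminus\{P\}}+I_P)}(i)$; the quotient on the right is a standard graded algebra whose Hilbert function is a nonincreasing $\{0,1\}$-sequence, so $\HF_\bbX-\HF_{\bbX\setminus\{P\}}$ is a nondecreasing $\{0,1\}$-valued function that is eventually $1$. Consequently $\bbX$ is a Cayley-Bacharach scheme if and only if $\HF_{\bbX\setminus\{P\}}(r_\bbX-1)=\HF_\bbX(r_\bbX-1)$ for every $P\in\bbX$. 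Now $\bbX\setminus\{P\}$ and $\bbY\cup\{P\}$ are again linked inside the same $\bbW$, so applying the Hilbert-function formula to the pairs $(\bbX,\bbY)$ and $(\bbX\setminus\{P\},\bbY\cup\{P\})$ in degree $\alpha$, subtracting, and using $r_\bbW-1-\alpha=r_\bbX-1$, I obtain
\[
  \dim_K(I_\bbY)_\alpha-\dim_K(I_{\bbY\cup\{P\}})_\alpha \;=\; 1-\bigl(\HF_\bbX(r_\bbX-1)-\HF_{\bbX\setminus\{P\}}(r_\bbX-1)\bigr).
\]
The left side is $1$ exactly when some form of degree $\alpha$ vanishing on $\bbY$ fails to vanish at $P$; the right side is $1$ exactly when $\HF_{\bbX\setminus\{P\}}(r_\bbX-1)=\HF_\bbX(r_\bbX-1)$. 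Letting $P$ range over $\bbX$ and combining with the equivalence established in the previous paragraph identifies~(a) with~(b), and hence with~(c).

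The main obstacle is the Hilbert-function formula for linked zero-dimensional schemes used above: proving it, and in particular getting the degree shift right so that $r_\bbW-1-\alpha$ collapses to $r_\bbX-1$, needs some care with the Gorenstein duality on $P/I_\bbW$ and the normalization of the canonical module of $P/I_\bbX$ --- the rest is bookkeeping. A secondary point requiring attention is the meaning of ``generic'' over a non-algebraically-closed (in particular finite) field in~(b), which is handled by the flat base-change remark made at the outset.
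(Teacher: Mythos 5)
Your proof is correct. Note, however, that the paper never proves Theorem~\ref{thmSec1.1} itself --- it is quoted from \cite{GKR} --- and the paper's actual work (Theorems~\ref{thmSec3.4} and~\ref{thmSec3.8}) is a generalization to arbitrary $0$-dimensional schemes, proved with different machinery: maximal $p_j$-subschemes, standard sets of separators, the embedding $\tilde{\imath}$ into $\prod_j\calO_{\bbX,p_j}[T_j,T_j^{-1}]$, and local socle arguments (Lemma~\ref{lemSec3.4}, Proposition~\ref{propSec2.10}). Your route is the elementary one: everything is reduced to the linkage Hilbert-function formula $\dim_K(I_\bbY/I_\bbW)_i=\deg(\bbX)-\HF_\bbX(r_\bbW-1-i)$ (which is Proposition~\ref{propSec2.5}(c) of the paper) applied to the two linked pairs $(\bbX,\bbY)$ and $(\bbX\setminus\{P\},\bbY\cup\{P\})$, together with the identity $I_\bbX:F=I_{\bbX_F}$ and the monotone $\{0,1\}$-valued difference $\HF_\bbX-\HF_{\bbX\setminus\{P\}}$. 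This is clean and close in spirit to the original GKR argument; what it buys is brevity and transparency, and what it costs is that every step leans on reducedness and $K$-rationality --- $I_P:F=I_P$ needs $I_P$ prime, the $\{0,1\}$-valued difference needs $\kappa(P)=K$, and the degree-$(\deg\bbX-1)$ subscheme $\bbX\setminus\{P\}$ may simply not exist in the non-reduced or non-rational case (cf.\ the paper's example $\calZ(2X_0^4+X_0^2X_1^2-X_1^4)\subseteq\bbP^1_{\bbQ}$); the paper's separator formalism exists precisely to replace these steps. Two minor points you should make explicit: that the residual of $\bbX$ in $\bbW$ in the sense of Definition~\ref{defnSec2.4} really is $\bbW\setminus\bbX$ when $\bbW$ is reduced (so that Proposition~\ref{propSec2.5}(c) applies to both pairs), and, in your base-change remark for finite $K$, the one-line check that conditions (a) and (c) are insensitive to extension of the base field.
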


This result nicely leads to an efficient algorithm for
checking whether a given set $\bbX$ is
a Cayley-Bacharach scheme.
Later investigations of the Cayley-Bacharach property
have included the work of Fouli, Polini, and Ulrich \cite{FPU},
Robbiano \cite{KR1995}, Gold, Little, and Schenck \cite{GLS}, 
and Guardo~\cite{Gua2000}.
Moreover, this property has also been extended
for 0-dimensional schemes in~$\bbP^n_K$ (see \cite{KLL,Kre,KLR,Lon}).
When $\bbX\subseteq \bbP^n_K$ is a 0-dimensional scheme
over an algebraically closed field $K$, Robbiano and 
the first author \cite{KR1995} considered subschemes
of~$\bbX$ of degree $\deg(\bbX)-1$ to show that
the conditions (a) and~(c) of Theorem~\ref{thmSec1.1} are
still equivalent. However, we get no further information
for a generalization of condition (b) in this case.
It is worth noting here that if $K$ is not algebraically closed
then the scheme $\bbX$ may have no subschemes of degree $\deg(\bbX)-1$.
For example, the 0-dimensional scheme
$\bbX = \calZ(2X_0^4+X_0^2X_1^2-X_1^4)\subseteq \bbP^1_{\bbQ}$
is of degree~4, but it has no subscheme of degree~3.

Our focus in this paper is to look at an extension of the
Cayley-Bacharach property and to generalize the above theorem
for 0-dimensional schemes~$\bbX$ in~$\bbP^n_K$
over an arbitrary field~$K$.
In particular, we will look closely at the natural question
whether conditions (a) and (b) of the above theorem are equivalent
for our more general setting. Our approach is to use the notion
of maximal $p_j$-subschemes of~$\bbX$ which are introduced and
studied in the papers~\cite{KL,KLL}.
Also, we discuss a characterization
of the Cayley-Bacharach property of degree $d$ with $d\in\bbN$
in terms of the canonical module of the coordinate ring of~$\bbX$
and apply this result to bound the Hilbert function of the
Dedekind different of~$\bbX$ and determine its regularity index
in some special cases.

This paper is structured as follows. In Section~\ref{Section2},
we introduce the relevant information about Hilbert function,
maximal $p_j$-subschemes, standard set of sep\-arators,
and Liaison techniques.  Especially, we give
an explicit description of the residual scheme in a
0-dimensional arithmetically Gorenstein scheme of a
maximal $p_j$-subscheme of~$\bbX$.
In Section~\ref{Section3}, we prove
the generalization of the results mentioned above
(see Theorems~\ref{thmSec3.4} and \ref{thmSec3.8}).
We also give Example~\ref{examS3.7} to show that
the condition~(b) in Theorem~\ref{thmSec1.1}
is, in general, only a sufficient condition,
not a necessary condition, for $\bbX$
being a Cayley-Bacharach scheme.
In the final section, we characterize
the Cayley-Bacharach property of degree $d$ using
the canonical module of the coordinate ring of~$\bbX$,
and then look at the Hilbert function of the
Dedekind different of~$\bbX$ and its regularity index
when $\bbX$ has the Cayley-Bacharach property of degree $d$.

All examples in this paper were calculated by using
the computer algebraic system ApCoCoA (see~\cite{ApC}).

\medskip\bigbreak
\section{Basic Facts and Notation}
\label{Section2}

Throughout the paper, we work over an arbitrary field $K$.
The $n$-dimensional projective space over $K$ is denoted
by~$\bbP^n_K$ and its homogeneous coordinate ring is the polynomial ring
$P=K[X_0,...,X_n]$ equipped with the standard grading.
Our object of interest is a 0-dimensional subscheme $\bbX$
of~$\bbP^n_K$. Its homogeneous vanishing ideal in~$P$ is denoted
by~$I_\bbX$ and its homogeneous coordinate ring is
given by~$R_\bbX=P/I_\bbX.$
The set of closed points of~$\bbX$ is called the {\it support}
of~$\bbX$ and is denoted by $\Supp(\bbX) = \{p_1,\dots,p_s\}$.
We always assume that $\Supp(\bbX)\cap\calZ(X_0) = \emptyset$.
Under this assumption, the image $x_0$ of~$X_0$ in~$R_\bbX$
is a non-zerodivisor, and hence $R_\bbX$ is a 1-dimensional
Cohen-Macaulay ring.
To each point $p_j \in \Supp(\bbX)$ we have the associated
local ring~$\calO_{\bbX,p_j}$. Its maximal ideal is denoted
by $\fm_{\bbX,p_j}$, and the residue field of~$\bbX$ at~$p_j$
is denoted by~$\kappa(p_j)$.
The {\it degree} of~$\bbX$ is defined as
$\deg(\bbX) = \sum_{j=1}^s \dim_K(\calO_{\bbX,p_j})$.

Given any finitely generated graded $R_\bbX$-module $M$,
the {\it Hilbert function} of $M$ is a map
$\HF_M: \bbZ \rightarrow \bbN$ given by $\HF_M(i)=\dim_K(M_i)$.
The unique polynomial $\HP_M(z) \in \mathbb{Q}[z]$ for which
$\HF_M(i)=\HP_M(i)$ for all $i\gg 0$ is called the
{\it Hilbert polynomial} of~$M$. The number
$$
\ri(M) = \min\{i\in\bbZ\mid \HF_{M}(j)=\HP_M(j)\
\textrm{for all} \ j\geq i\}
$$
is called the {\it regularity index} of $M$ (or of $\HF_M$).
Whenever $\HF_M(i)=\HP_M(i)$ for all $i\in\bbZ$,
we let $\ri(M)=-\infty.$
Instead of $\HF_{R_\bbX}$ we also write $\HF_{\bbX}$
and call it the Hilbert function of $\bbX$.
Its regularity index is denoted by~$r_{\bbX}.$
Note that $\HF_{\bbX}(i)=0$ for $i<0$ and
$$
1=\HF_{\bbX}(0)<\HF_{\bbX}(1)<\cdots<\HF_{\bbX}(r_{\bbX}-1)< \deg(\bbX)
$$
and $\HF_{\bbX}(i)=\deg(\bbX)$ for $i\geq r_{\bbX}$.

\begin{definition}
Let $1\le j\le s$. A subscheme $\bbX'\subsetneq \bbX$ is called
a {\it $p_j$-subscheme} if the following conditions are
satisfied:
\begin{enumerate}
  \item[(a)]
  $\calO_{\bbX',p_k} = \calO_{\bbX,p_k}$ for $k \ne j$.
  \item[(b)] The map
  $\calO_{\bbX,p_j}\twoheadrightarrow\calO_{\bbX',p_j}$
  is an epimorphism.
\end{enumerate}

A $p_j$-subscheme $\bbX'\subseteq\bbX$ is called {\it maximal}
if $\deg(\bbX') = \deg(\bbX)-\dim_K \kappa(p_j)$.
\end{definition}

In case $\bbX$ has $K$-rational support (i.e., all points
$p_1,...,p_s$ are $K$-rational), a maximal $p_j$-subscheme
of~$\bbX$ is nothing but a subscheme $\bbX'\subseteq\bbX$
of degree $\deg(\bbX')=\deg(\bbX)-1$ with
$\calO_{\bbX',p_j} \ne \calO_{\bbX,p_j}$.
According to \cite[Proposition~3.2]{KL}, there is
a 1-1 correspondence between a maximal $p_j$-subscheme $\bbX'$
and an ideal $\langle s_j\rangle$ in~$\calO_{\bbX,p_j}$,
where $s_j$ is an element in the socle
$\Ann_{\calO_{\bbX,p_j}}(\fm_{\bbX,p_j})$ of~$\calO_{\bbX,p_j}$.
The vanishing ideal of the scheme~$\bbX'$ in~$R_\bbX$ is denoted
by~$I_{\bbX'/\bbX}$ and its initial degree is given by
$\alpha_{\bbX'/\bbX} = \min \{\, i\in\bbN \mid
(I_{\bbX'/\bbX})_i \ne \langle 0\rangle \,\}$.
We find a non-zero element
$f_\bbX' \in (I_{\bbX'/\bbX})_i$, $i \ge \alpha_{\bbX'/\bbX}$,
such that
$\tilde{\imath}(f_\bbX') = (0,\dots,0, s_jT_j^i,0,\dots,0)$,
where the map
$$
\tilde{\imath}: R_\bbX \rightarrow Q^h(R_\bbX) \cong
{\textstyle\prod\limits_{j=1}^s}\calO_{\bbX,p_j}[T_j,T_j^{-1}]
$$
is the injection given by
$\tilde{\imath}(f) = (f_{p_1}T_1^i,\dots, f_{p_s}T_s^i)$,
for $f \in (R_\bbX)_i$ with $i\ge 0$, where $f_{p_j}$
is the germ of~$f$ at~$p_j$. Here the ring $Q^h(R_\bbX)$ is
the {\it homogeneous ring of quotients} of~$R_\bbX$ defined as
the localization of~$R_\bbX$ with respect to the set of
all homogeneous non-zerodivisors of~$R_\bbX$
(cf.~\cite[Section~3]{KL}).

Let $\varkappa_j := \dim_K \kappa(p_j)$, and let
$\{e_{j1}, \dots, e_{j\varkappa_j}\}\subseteq \calO_{\bbX,p_j}$
be elements whose residue classes form a $K$-basis of $\kappa(p_j)$.
For $a \in \calO_{\bbX,p_j}$, we set
$$
\mu(a) := \min\{i\in\bbN \,\mid\,
(0,\dots,0,aT_j^i,0,\dots,0) \in \tilde{\imath}(R_\bbX)\}.
$$
Since the restriction map
$\tilde{\imath}|_{(R_\bbX)_{r_\bbX}}:(R_\bbX)_{r_\bbX}
\rightarrow (\prod_{j=1}^s\calO_{\bbX,p_j}[T_j,T_j^{-1}])_{r_\bbX}$
is an isomor\-phism of K-vector spaces, we have
$\mu(a)\le r_\bbX$ for all $a\in \calO_{\bbX,p_j}$.
Using this notation, we recall from \cite[Section~1]{KLL}
the following notion of separators.

\begin{definition}
Let $\bbX'$ be a maximal $p_j$-subscheme as above, and let
$$
f^*_{jk_j} := \tilde{\imath}^{-1}((0,\dots,0,
e_{jk_j}s_{j}T_j^{\mu(e_{jk_j}s_{j})},0,\dots,0))
$$
and $f_{jk_j}=x_0^{r_{\bbX}-\mu(e_{jk_j}s_{j})}f^*_{jk_j}$
for $k_j = 1, \dots, \varkappa_j$.
\begin{enumerate}
 \item[(a)] The set $\{f^*_{j1},\dots,f^*_{j\varkappa_j}\}$
 is called the
 {\it set of minimal separators of $\bbX'$ in $\bbX$}
  with respect to $s_j$ and $\{e_{j1},\dots,e_{j\varkappa_j}\}$.

 \item[(b)] The set $\{f_{j1},\dots,f_{j\varkappa_j}\}$
 is called the
 {\it standard set of separators of $\bbX'$ in~$\bbX$}
 with respect to $s_j$ and $\{e_{j1},\dots,e_{j\varkappa_j}\}$.

 \item[(c)] The number
   $$
   \mu_{\bbX'/\bbX}:=\max\{\, \deg(f^*_{jk_j})
   \mid k_j = 1, \dots, \varkappa_j \,\}
   $$
   is called the {\it maximal degree of a minimal
   separator of~$\bbX'$ in~$\bbX$}.
\end{enumerate}
\end{definition}

\begin{remark} \label{remSec2.3}
Let $\bbX'$ be a maximal $p_j$-subscheme of~$\bbX$.
\begin{enumerate}
  \item[(a)]
  The maximal degree of a minimal separator of~$\bbX'$ in~$\bbX$
  depends neither on the choice of the socle element~$s_j$
  nor on the specific choice of~$\{e_{j1},\dots,e_{j\varkappa_j}\}$
  (see \cite[Lemma~4.4]{KL}).
  Moreover, we have $\mu_{\bbX'/\bbX}\le r_\bbX$.

  \item[(b)]
  For $k_j =1,\dots,\varkappa_j$, let $F_{jk_j}$
  (respectively, $F^*_{jk_j}$) be a representative
  of $f_{jk_j}$ (respectively, $f^*_{jk_j}$) in $P$.
  We also say that the set $\{F_{j1},\dots,F_{j\varkappa_j}\}$
  is a standard set of separators of~$\bbX'$ in~$\bbX$
  and the set $\{F^*_{j1},\dots,F^*_{j\varkappa_j}\}$
  is a set of minimal separators of~$\bbX'$ in~$\bbX$.

  \item[(c)]
  According to \cite[Proposition~2.5(c)]{KLL},
  one may choose a set of minimal separators
  $\{f^*_{j1},\dots,f^*_{j\varkappa_j}\}$ of~$\bbX'$ in~$\bbX$
  such that
  $$
  (I_{\bbX'/\bbX})_i = \big\langle\,
  x_0^{i-\deg(f^*_{jk_j})}f^*_{jk_j} \,\mid\, \deg(f^*_{jk_j})\le i
  \,\big\rangle_K
  $$
  for all $i\ge 0$.
\end{enumerate}
\end{remark}

Recall that a 0-dimensional scheme $\bbX$ is called a
{\it complete intersection} if $I_\bbX$ can be generated
by $n$ homogeneous polynomials in~$P$, and it is called
an {\it arithmetically Gorenstein scheme} if $R_\bbX$
is a Gorenstein ring. Note that every complete intersections
are arithmetically Gorenstein, however, except for the
case $n=2$, an arithmetically Gorenstein scheme is
not a complete intersection in general
(see~\cite[Example~2.12]{KL}).

In what follows, we let $\bbW \subseteq \bbP^n_K$ be
a 0-dimensional arithmetically Gorenstein scheme,
let $\bbX$ be a subscheme of~$\bbW$, and let
$I_{\bbX/\bbW}$ be the ideal of $\bbX$ in $R_\bbW$.
Then the homogeneous ideal
$\Ann_{R_\bbW}(I_{\bbX/\bbW})\subseteq R_\bbW$
is saturated and defines a 0-dimensional subscheme~$\bbY$ of~$\bbW$.

\begin{definition}\label{defnSec2.4}
\begin{enumerate} 
  \item[(a)]
  The subscheme $\bbY \subseteq \bbW$
  which is defined by the saturated homogeneous ideal
  $I_{\bbY/\bbW} = \Ann_{R_\bbW}(I_{\bbX/\bbW})$
  is said to be the {\it residual scheme}
  of~$\bbX$ in~$\bbW$. We also say that
  $\bbX$ and $\bbY$ are {\it (algebraically) linked}
  by~$\bbW$.

  \item[(b)]
  Two linked schemes $\bbX$ and $\bbY$ by~$\bbW$
  are said to be {\it geometrically linked} by~$\bbW$
  if they have no common irreducible component.
\end{enumerate}
\end{definition}

\begin{remark}
From the point of view of the saturated ideals,
the schemes $\bbX$ and $\bbY$ are geometrically linked
by~$\bbW$ if and only if $I_{\bbW} = I_{\bbX} \cap I_{\bbY}$
and neither $I_{\bbX}$ nor $I_{\bbY}$ is contained in
any associated prime of the other (see~\cite[Section 5.2]{Mig}).
In this case, if we write $\Supp(\bbX) = \{p_1, \dots, p_s\}$ and
$\Supp(\bbY) = \{p'_{1}, \dots, p'_{t}\}$, then we have
$\Supp(\bbW) = \{p_1, \dots, p_s, p'_{1}, \dots, p'_{t}\}$
and $\Supp(\bbX)\cap\Supp(\bbY) = \emptyset$.
In particular, we have $\calO_{\bbW,p_j}=\calO_{\bbX,p_j}$
for $j=1,\dots,s$ and $\calO_{\bbW,p'_j}=\calO_{\bbY,p'_j}$
for $j = 1, \dots, t$.
\end{remark}

First we collect some useful results about the linked schemes
$\bbX$ and $\bbY$ by the arithmetically Gorenstein scheme $\bbW$.

\begin{proposition}\label{propSec2.5}
\begin{enumerate}
  \item[(a)]
  \!We have $I_{\bbX/\bbW} = \Ann_{R_\bbW}(I_{\bbY/\bbW})$.

  \item[(b)]
  We have $\deg(\bbW) = \deg(\bbX) + \deg(\bbY)$
  and $r_{\bbW} = r_{\bbX}+\alpha_{\bbY/\bbW}
  = r_{\bbY}+\alpha_{\bbX/\bbW}$.

  \item[(c)] The Hilbert function of $I_{\bbY/\bbW}$ satisfies
  $$
  \HF_{I_{\bbY/\bbW}}(i) = \deg(\bbX)-\HF_{\bbX}(r_\bbW-i-1)
  \quad \mbox{for all $i\in\bbZ$.}
  $$
\end{enumerate}
\end{proposition}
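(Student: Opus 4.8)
The plan is to deduce everything from the basic duality of linkage by an arithmetically Gorenstein scheme, using the canonical module of $R_\bbW$. Since $\bbW$ is arithmetically Gorenstein and $0$-dimensional, $R_\bbW$ is a $1$-dimensional Gorenstein ring, so its canonical module $\omega_{R_\bbW}$ is a shifted copy of $R_\bbW$ itself; concretely $\omega_{R_\bbW}\cong R_\bbW(r_\bbW - 1)$ after identifying the a-invariant. The key linkage fact I would invoke (this is standard, e.g.\ \cite{Mig}, but I would state it carefully) is that applying $\Hom_{R_\bbW}(-,\omega_{R_\bbW})$ to the short exact sequence $0\to I_{\bbX/\bbW}\to R_\bbW\to R_\bbX\to 0$ identifies $\omega_{R_\bbX}$ with $\Hom_{R_\bbW}(R_\bbX,\omega_{R_\bbW})\cong \Ann_{R_\bbW}(I_{\bbX/\bbW})(r_\bbW-1) = I_{\bbY/\bbW}(r_\bbW-1)$, and likewise with the roles of $\bbX$ and $\bbY$ exchanged. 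In particular $I_{\bbY/\bbW}$, being isomorphic to a canonical module of a $1$-dimensional CM ring, is itself a maximal Cohen--Macaulay $R_\bbW$-module, hence a saturated ideal defining a genuine $0$-dimensional scheme, which retroactively justifies the setup.

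For part (a): I would first show $I_{\bbX/\bbW}\subseteq \Ann_{R_\bbW}(I_{\bbY/\bbW})$, which is immediate from $I_{\bbY/\bbW}=\Ann_{R_\bbW}(I_{\bbX/\bbW})$. For the reverse inclusion, the cleanest route is to note that both ideals are saturated (the right-hand side is an annihilator, hence saturated, and MCM as noted above) and then compare Hilbert functions; alternatively, one uses that $R_\bbW$ is Gorenstein so the biduality map $M\to \Hom(\Hom(M,\omega),\omega)$ is an isomorphism for MCM modules $M$, applied to $M=R_\bbX$: this gives $\Ann(\Ann(I_{\bbX/\bbW}))=I_{\bbX/\bbW}$ on the nose. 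I prefer the biduality argument since it is the source of the Hilbert-function identities as well.

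For part (b): the degree formula $\deg(\bbW)=\deg(\bbX)+\deg(\bbY)$ follows from the exact sequence $0\to I_{\bbX/\bbW}\to R_\bbW\to R_\bbX\to 0$ together with the identification $I_{\bbX/\bbW}\cong \omega_{R_\bbY}(\text{shift})$, on passing to Hilbert polynomials (all three have constant Hilbert polynomials equal to their degrees). For the regularity-index formula, I would compute the a-invariant (top nonvanishing degree of local cohomology, equivalently the degree where $\HF$ and $\HP$ start to agree from) on both sides: for a $1$-dimensional CM graded ring $R_\bbX$ one has $\ri(R_\bbX)=a(R_\bbX)+1$, and the isomorphism $\omega_{R_\bbX}\cong I_{\bbY/\bbW}(r_\bbW-1)$ translates the known a-invariant of $R_\bbX$ into the initial degree $\alpha_{\bbY/\bbW}$ of $I_{\bbY/\bbW}$, yielding $r_\bbW = r_\bbX + \alpha_{\bbY/\bbW}$; by symmetry $r_\bbW = r_\bbY + \alpha_{\bbX/\bbW}$.

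For part (c): this is the Hilbert-function incarnation of the same duality. From $\omega_{R_\bbX}\cong I_{\bbY/\bbW}(r_\bbW-1)$ one gets $\HF_{I_{\bbY/\bbW}}(i)=\HF_{\omega_{R_\bbX}}(i-r_\bbW+1)=\dim_K(\omega_{R_\bbX})_{i-r_\bbW+1}$. Local duality for the $1$-dimensional CM ring $R_\bbW$ (with $R_\bbX$ an MCM $R_\bbW$-module) gives $\dim_K(\omega_{R_\bbX})_j = \dim_K H^1_{\fm}(R_\bbX)_{-j}$, and since $R_\bbX$ is $1$-dimensional CM with $\HP_{R_\bbX}=\deg(\bbX)$ one has $\dim_K H^1_{\fm}(R_\bbX)_{-j} = \deg(\bbX) - \HF_\bbX(-j)$ for all $j$ (this is the standard relation $\HF_\bbX(m) - \HP_\bbX = -\dim_K H^1_\fm(R_\bbX)_m$, valid because $H^0_\fm(R_\bbX)=0$ as $x_0$ is a nonzerodivisor). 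Substituting $j=i-r_\bbW+1$ gives $\HF_{I_{\bbY/\bbW}}(i)=\deg(\bbX)-\HF_\bbX(r_\bbW-i-1)$, as claimed.

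The main obstacle is bookkeeping the grading shift: pinning down that $\omega_{R_\bbW}\cong R_\bbW(r_\bbW-1)$ with exactly this twist (equivalently $a(R_\bbW)=r_\bbW-1$ via $a(R_\bbW)=\ri(R_\bbW)-1$), and then carrying that shift consistently through $\Hom_{R_\bbW}(R_\bbX,\omega_{R_\bbW})$ so that the twist landing on $I_{\bbY/\bbW}$ is precisely $r_\bbW-1$ and not off by one. Everything else is a formal consequence of Gorenstein duality plus the elementary local-cohomology description of the Hilbert function of a $1$-dimensional CM ring.
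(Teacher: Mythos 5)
Your proposal is correct and takes essentially the same route as the paper: both arguments hinge on the identification $\omega_{R_\bbX}\cong \Hom_{R_\bbW}(R_\bbX,\omega_{R_\bbW})\cong I_{\bbY/\bbW}(r_\bbW-1)$ coming from $\omega_{R_\bbW}\cong R_\bbW(r_\bbW-1)$, with the same grading bookkeeping. The only cosmetic differences are that the paper simply cites Davis--Geramita--Orecchia for parts (a) and (b) rather than re-deriving them from biduality and $a$-invariants, and for part (c) it reads off the Hilbert function from the elementary $K[x_0]$-dual $\Hom_{K[x_0]}(R_\bbX,K[x_0])$ (whose Hilbert function is $\deg(\bbX)-\HF_\bbX(-i-1)$) instead of invoking graded local duality and local cohomology.
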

\begin{proof}
Claims (a) and (b) follow from~\cite{DGO}.
To prove (c), we use (a) and (b) and~\cite[Proposition~2.2.9]{GW}
to get the following sequence of isomorphism of graded $R_\bbW$-modules
$$
\begin{aligned}
I_{\bbY/\bbW}(\alpha_{\bbY/\bbW})
&= \Ann_{R_\bbW}(I_{\bbX/\bbW})(\alpha_{\bbY/\bbW})
\cong \Hom_{R_\bbW}(R_\bbW/I_{\bbX/\bbW},R_\bbW)(r_\bbW-r_\bbX)\\
&\cong \Hom_{R_\bbW}(R_\bbW/I_{\bbX/\bbW},
 R_\bbW(r_\bbW))(-r_\bbX) \\
&\cong \Hom_{R_\bbW}(R_\bbX,
\Hom_{K[x_0]}(R_\bbW,K[x_0]))(-r_\bbX)\\
&\cong \Hom_{K[x_0]}(R_\bbX,K[x_0])(-r_{\bbX}).
\end{aligned}
$$
It is well known (cf.~\cite[Lemma~1.3]{Kre}) that
$\HF_{\Hom_{K[x_0]}(R_\bbX,K[x_0])}(i)=\deg(\bbX)-\HF_\bbX(-i-1)$
for all $i\in \bbZ$. Hence we get the desired formula
for the Hilbert function of~$I_{\bbY/\bbW}$ and claim (c)
follows.
\end{proof}

In the following we shall use ``$^{\overline{\;\;\,}}$''
to denote residue classes modulo~$X_0$.

\begin{lemma} \label{lemSec2.6}
For every $d \in \{1, \dots, r_{\bbX}\}$, we have
$(\overline{I}_\bbW)_{r_{\bbW}}:
(\overline{I}_\bbY)_{\alpha_{\bbY/\bbW}+(r_{\bbX}-d)}
= (\overline{I}_\bbX)_d$.
\end{lemma}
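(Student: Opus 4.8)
The plan is to pass to the Artinian reduction of~$R_\bbW$ and invoke Gorenstein duality there. First I would set $\overline{P}=K[X_1,\dots,X_n]$ and $A:=\overline{P}/\overline{I}_\bbW=R_\bbW/x_0R_\bbW$. Since $x_0$ is a non-zerodivisor on the $1$-dimensional Cohen--Macaulay ring~$R_\bbW$, the standard graded $K$-algebra~$A$ is Artinian Gorenstein; moreover, from $\HF_A(i)=\HF_\bbW(i)-\HF_\bbW(i-1)$ together with the elementary properties of~$\HF_\bbW$ recalled above, the socle of~$A$ lies in degree $m:=r_\bbW$, so that $A_m\cong K$, $A_i=\langle0\rangle$ for $i>m$, and every multiplication map $A_i\times A_{m-i}\to A_m\cong K$ is a perfect pairing. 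From this I would deduce, for every homogeneous ideal $J\subseteq A$ and every $k\in\bbZ$, the identity
$$
(\Ann_A J)_k=\{\,g\in A_k\mid g\cdot J_{m-k}=\langle0\rangle\ \text{in}\ A_m\,\}.
$$
Indeed the inclusion ``$\subseteq$'' is clear, and if $g\in A_k$ annihilates~$J_{m-k}$, then for each~$\ell$ we have $A_{m-k-\ell}\cdot(gJ_\ell)\subseteq gJ_{m-k}=\langle0\rangle$ because $J$ is an ideal, so perfectness of the pairing $A_{k+\ell}\times A_{m-k-\ell}\to A_m$ forces $gJ_\ell=\langle0\rangle$, i.e. $g\in\Ann_A J$.

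Next I would pin down the ideals involved. Let $J_\bbX$ and~$J_\bbY$ be the images of~$I_\bbX$ and~$I_\bbY$ in~$A$, so that $J_\bbX=\overline{I}_\bbX/\overline{I}_\bbW$ and $J_\bbY=\overline{I}_\bbY/\overline{I}_\bbW$. Since $x_0$ is a non-zerodivisor on~$R_\bbX$ and on~$R_\bbY$, one checks that $I_{\bbX/\bbW}\cap x_0R_\bbW=x_0I_{\bbX/\bbW}$ and likewise for~$\bbY$, whence $J_\bbX\cong I_{\bbX/\bbW}/x_0I_{\bbX/\bbW}$ and $J_\bbY\cong I_{\bbY/\bbW}/x_0I_{\bbY/\bbW}$; in particular $\HF_{J_\bbY}$ is the first difference of~$\HF_{I_{\bbY/\bbW}}$ and $A/J_\bbX\cong R_\bbX/x_0R_\bbX$. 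I then claim that $J_\bbX=\Ann_A(J_\bbY)$. The inclusion ``$\subseteq$'' is immediate from $I_{\bbX/\bbW}\cdot I_{\bbY/\bbW}=\langle0\rangle$ in~$R_\bbW$ (Proposition~\ref{propSec2.5}(a)), and for equality I would compare Hilbert functions degree by degree: the displayed identity gives $\dim_K(\Ann_A J_\bbY)_d=\dim_K A_d-\dim_K(J_\bbY)_{m-d}$, while $\dim_K(J_\bbX)_d=\dim_K A_d-\bigl(\HF_\bbX(d)-\HF_\bbX(d-1)\bigr)$, and Proposition~\ref{propSec2.5}(b),(c) yield $\dim_K(J_\bbY)_{m-d}=\HF_{I_{\bbY/\bbW}}(r_\bbW-d)-\HF_{I_{\bbY/\bbW}}(r_\bbW-d-1)=\HF_\bbX(d)-\HF_\bbX(d-1)$, so the two dimensions agree.

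Finally I would translate the colon in the statement. Using $\alpha_{\bbY/\bbW}+r_\bbX=r_\bbW=m$ (Proposition~\ref{propSec2.5}(b)), a form $g\in\overline{P}_d$ lies in $(\overline{I}_\bbW)_{r_\bbW}:(\overline{I}_\bbY)_{\alpha_{\bbY/\bbW}+(r_\bbX-d)}$ if and only if $g\cdot(\overline{I}_\bbY)_{m-d}\subseteq(\overline{I}_\bbW)_m$, i.e. if and only if the residue class $\bar g\in A_d$ satisfies $\bar g\cdot(J_\bbY)_{m-d}=\langle0\rangle$ in~$A_m$, i.e. (by the displayed identity applied to $J=J_\bbY$ and $k=d$) if and only if $\bar g\in(\Ann_A J_\bbY)_d=(J_\bbX)_d$. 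Since $\overline{I}_\bbW\subseteq\overline{I}_\bbX$, the preimage of $(J_\bbX)_d=(\overline{I}_\bbX)_d/(\overline{I}_\bbW)_d$ under $\overline{P}_d\to A_d$ is exactly $(\overline{I}_\bbX)_d$, which gives the asserted equality.

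The step I expect to be the main obstacle is establishing $\Ann_A(J_\bbY)=J_\bbX$, i.e. that the liaison between~$\bbX$ and~$\bbY$ is inherited by the Artinian reduction~$A$: this is the point at which the behaviour of annihilators under reduction by the regular element~$x_0$ has to be married with the numerical input of Proposition~\ref{propSec2.5}(c). Everything else amounts to bookkeeping with degrees together with the perfect-pairing property of graded Artinian Gorenstein algebras.
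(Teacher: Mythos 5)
Your proof is correct and follows essentially the same route as the paper: reduce modulo the non-zerodivisor $x_0$ to the Artinian Gorenstein algebra $\overline{R}_\bbW$ with one-dimensional socle in degree $r_\bbW$, use the perfect pairing to show that annihilating the single graded piece $(\overline{I}_{\bbY/\bbW})_{r_\bbW-d}$ is the same as annihilating all of $\overline{I}_{\bbY/\bbW}$, and identify that annihilator with $\overline{I}_{\bbX/\bbW}$. The only difference is that you supply in full (via the Hilbert-function comparison from Proposition~\ref{propSec2.5}) the details the paper delegates to Lemma~4.1 and Proposition~4.3.a of \cite{GKR}.
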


\begin{proof}
Clearly, we have $I_{\bbX}\cdot I_{\bbY} \subseteq I_{\bbW}$.
This implies
$(\overline{I}_\bbX)_d \subseteq (\overline{I}_\bbW)_{r_\bbW}
\!:\! (\overline{I}_\bbY)_{\alpha_{\bbY/\bbW}+(r_\bbX-d)}$.
For the other inclusion, let $f \in (\overline{I}_\bbW)_{r_\bbW}:
(\overline{I}_\bbY)_{\alpha_{\bbY/\bbW}+(r_\bbX-d)}$.
In~$\overline{R}_\bbW = R_\bbW / \langle x_0 \rangle$,
we have $\overline{f} \in (\Ann_{\overline{R}_\bbW}
((\overline{I}_{\bbY/\bbW})_{\alpha_{\bbY/\bbW}+(r_{\bbX}-d)}))_d$.
Since $\bbW$ is arithmetically Gorenstein, the ring $\overline{R}_\bbW$
is a $0$-dimensional local Gorenstein ring with socle
$(\overline{R}_\bbW)_{r_\bbW} \cong K$.
Thus we can argue in the same way as Lemma~4.1 and Proposition~4.3.a
of~\cite{GKR} to get
$$
\begin{aligned}
(\Ann_{\overline{R}_\bbW}((\overline{I}_{\bbY/\bbW}
)_{\alpha_{\bbY/\bbW}+(r_{\bbX}-d)}))_d
&= (\Ann_{\overline{R}_\bbW}((\overline{I}_{\bbY/\bbW})_{r_\bbW-d}))_d\\
&= (\Ann_{\overline{R}_\bbW}(\overline{I}_{\bbY/\bbW}))_d
= (\overline{I}_{\bbX/\bbW})_d.
\end{aligned}
$$
Consequently, we have
$\overline{f} \in (\overline{I}_{\bbX/\bbW})_d$,
and hence $f \in (\overline{I}_{\bbX})_d$, as desired.
\end{proof}

The next lemma follows for instance from
\cite[3.15 and 16.38-40]{Lam1998}.

\begin{lemma} \label{lemSec2.9}
Let $A/K$ be a finite Gorenstein algebra.
\begin{enumerate}
  \item[(a)] There is a non-degenerate $K$-bilinear form
  $\Phi : A\times A \rightarrow K$
   with the property that $\Phi(xy,z)=\Phi(x,yz)$
   for all $x,y,z \in A$.

  \item[(b)] Let $I$ be a non-zero ideal of~$A$, and let
   $I^0 = \{\, x \in A \, \mid \, \Phi(I,x)=0 \,\}$.
   Then we have $\Ann_A(I) = I^0$ and
   $\dim_KI + \dim_K \Ann_A(I) = \dim_K A$.
\end{enumerate}
\end{lemma}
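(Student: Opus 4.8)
The plan is to build the bilinear form in part~(a) from a single linear functional $\lambda\colon A\to K$, by setting $\Phi(x,y):=\lambda(xy)$, and then to deduce part~(b) by a rank--nullity count applied to the restriction-of-functionals map.

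For part~(a) I would first recall the characterization of finite Gorenstein algebras behind the cited results of \cite{Lam1998}: the commutative finite-dimensional $K$-algebra $A$ is Gorenstein precisely when the $A$-module $\Hom_K(A,K)$, with $A$ acting through $(a\cdot\varphi)(x)=\varphi(ax)$, is free of rank one. Fixing an $A$-module isomorphism $\psi\colon A\xrightarrow{\sim}\Hom_K(A,K)$ and putting $\lambda:=\psi(1_A)$, one gets $\psi(a)=a\cdot\lambda$ for all $a$; that is, $\psi(a)$ is the functional $x\mapsto\lambda(ax)$. Hence for $\Phi(x,y):=\lambda(xy)$ we have $\Phi(x,\,\cdot\,)=\psi(x)$ and, using commutativity of $A$, $\Phi(\,\cdot\,,y)=\psi(y)$. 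Since $\psi$ is injective, both the left and the right radical of $\Phi$ vanish, so $\Phi$ is non-degenerate, and the identity $\Phi(xy,z)=\lambda(xyz)=\Phi(x,yz)$ is immediate. This settles~(a).

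For part~(b) I would first identify $I^0$ with $\Ann_A(I)$. The inclusion $\Ann_A(I)\subseteq I^0$ is clear, since $Ix=0$ gives $\lambda(Ix)=0$. Conversely, suppose $x\in I^0$, i.e.\ $\lambda(yx)=0$ for every $y\in I$. Fix $y\in I$; for any $a\in A$ we have $ay\in I$ because $I$ is an ideal, hence $\Phi(a,yx)=\lambda\big(a(yx)\big)=\lambda\big((ay)x\big)=0$. As this holds for all $a\in A$, non-degeneracy of $\Phi$ forces $yx=0$, so $x\in\Ann_A(I)$. For the dimension identity I would consider the composite
$$
A\ \xrightarrow{\psi}\ \Hom_K(A,K)\ \xrightarrow{\mathrm{res}}\ \Hom_K(I,K),
$$
where $\mathrm{res}$ restricts a functional on $A$ to the subspace $I$. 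The map $\mathrm{res}$ is surjective (any functional on $I$ extends to $A$) and $\psi$ is an isomorphism, so the composite is surjective; it sends $a$ to the functional $y\mapsto\lambda(ay)$ on $I$, and hence has kernel $\{a\in A\mid\lambda(aI)=0\}=I^0=\Ann_A(I)$. Rank--nullity now gives
$$
\dim_K A=\dim_K\Ann_A(I)+\dim_K\Hom_K(I,K)=\dim_K\Ann_A(I)+\dim_K I .
$$

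The only genuinely non-formal step is the Frobenius description of finite Gorenstein algebras used to produce $\lambda$ at the start of~(a); everything after that is elementary linear algebra. If one prefers to stay self-contained, one can instead take the $A$-module isomorphism $A\cong\Hom_K(A,K)$ — one of the standard equivalent formulations of the Gorenstein property for an Artinian $K$-algebra — as the starting point, arriving at the same functional $\lambda$.
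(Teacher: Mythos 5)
Your proof is correct, and it is essentially the argument the paper delegates to its reference: the paper gives no proof beyond citing Lam's results on Frobenius algebras, and your construction of $\Phi(x,y)=\lambda(xy)$ from an $A$-module isomorphism $A\cong\Hom_K(A,K)$ is exactly the standard content of those cited facts. The one non-formal ingredient you rely on --- that a finite commutative $K$-algebra is Gorenstein if and only if $\Hom_K(A,K)$ is a free $A$-module of rank one --- is valid here (componentwise on the local factors, $\Hom_K(A,K)$ is the injective hull of the residue field), and you correctly flag it as the step to be sourced; the rest is the clean linear-algebra count one would want.
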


A concrete description of the residual scheme in~$\bbW$
of a maximal $p_j$-subscheme of~$\bbX$ is given by the
following proposition.

\begin{proposition} \label{propSec2.10}
Let $\bbW\subseteq\bbP^n_K$ be a $0$-dimensional arithmetically
Gorenstein scheme, let $\bbX$ and $\bbX'$ be subschemes
of~$\bbW$, let $\bbY$ and $\bbY'$ be the residual schemes
of~$\bbX$ and $\bbX'$ in~$\bbW$ respectively,
and let $p_j \in \Supp(\bbX)$.
Then $\bbX'$ is a (maximal) $p_j$-subscheme of~$\bbX$ if and only if
$\bbY'$ contains $\bbY$ as a (maximal) $p_j$-subscheme.
\end{proposition}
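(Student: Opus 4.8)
The plan is to translate everything to the level of local rings and use the duality built into the linkage $\Ann_{R_\bbW}(-)$. The statement is symmetric in the sense of Proposition~\ref{propSec2.5}(a), so it suffices to prove one implication, say that $\bbX'$ being a $p_j$-subscheme of~$\bbX$ forces $\bbY'$ to contain $\bbY$ as a $p_j$-subscheme (with maximality controlled by a degree count). The key observation is that, since $\Supp(\bbX)\cap\Supp(\bbY)=\emptyset$ and $\calO_{\bbW,p_j}=\calO_{\bbX,p_j}$ for $p_j\in\Supp(\bbX)$, a point $p_j$ in the support of $\bbX$ is \emph{not} in the support of $\bbY$; so "$\bbY'$ contains $\bbY$ as a $p_j$-subscheme" really means $\bbY\subseteq\bbY'$, the schemes $\bbY$ and $\bbY'$ agree at every point $p_k\ne p_j$, and $\bbY'$ may have extra length concentrated at $p_j$. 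The first thing I would do is make this precise by unwinding the two definitions in terms of the conditions on local rings $\calO_{-,p_k}$.

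Next I would localize the ideal-theoretic identities. Working in the Artinian Gorenstein ring $\overline R_\bbW = R_\bbW/\langle x_0\rangle$ (or, more invariantly, passing to the punctured-spectrum decomposition $\prod_j \calO_{\bbW,p_j}$ after inverting $x_0$, as in the $Q^h$ description recalled in Section~\ref{Section2}), the annihilator operation $\Ann_{R_\bbW}(-)$ becomes, by Lemma~\ref{lemSec2.9}(b), the perpendicular with respect to a non-degenerate trace form, and it is compatible with the product decomposition over the points. Thus $I_{\bbY/\bbW}$ localizes at $p_k$ to $\Ann_{\calO_{\bbW,p_k}}((I_{\bbX/\bbW})_{p_k})$, and similarly for $\bbY',\bbX'$. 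Now the hypothesis that $\bbX'$ is a $p_j$-subscheme says exactly that $(I_{\bbX'/\bbW})_{p_k}=(I_{\bbX/\bbW})_{p_k}=0$ for $k\ne j$ (since $\calO_{\bbX',p_k}=\calO_{\bbX,p_k}$), while $(I_{\bbX'/\bbW})_{p_j}\supseteq(I_{\bbX/\bbW})_{p_j}$ with the quotient $\calO_{\bbX,p_j}\twoheadrightarrow\calO_{\bbX',p_j}$ an epimorphism. Taking $\Ann$ point by point immediately gives $(I_{\bbY'/\bbW})_{p_k}=(I_{\bbY/\bbW})_{p_k}$ for $k\ne j$ and $(I_{\bbY'/\bbW})_{p_j}\subseteq(I_{\bbY/\bbW})_{p_j}$ — i.e. $\bbY\subseteq\bbY'$ away from $p_j$ they coincide, and at $p_j$ the inclusion of ideals means $\calO_{\bbW,p_j}/(I_{\bbY/\bbW})_{p_j}\twoheadrightarrow\calO_{\bbW,p_j}/(I_{\bbY'/\bbW})_{p_j}$, which is precisely $\calO_{\bbY',p_j}\twoheadrightarrow\calO_{\bbY,p_j}$ being an epimorphism. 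That is the assertion that $\bbY$ is a $p_j$-subscheme of~$\bbY'$.

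For the maximality addendum I would use Proposition~\ref{propSec2.5}(b), namely $\deg(\bbW)=\deg(\bbX)+\deg(\bbY)=\deg(\bbX')+\deg(\bbY')$. Hence $\deg(\bbY')-\deg(\bbY)=\deg(\bbX)-\deg(\bbX')$, and since both differences are supported entirely at $p_j$, $\bbX'$ is a \emph{maximal} $p_j$-subscheme of $\bbX$ (i.e. $\deg(\bbX)-\deg(\bbX')=\dim_K\kappa(p_j)$) if and only if $\deg(\bbY')-\deg(\bbY)=\dim_K\kappa(p_j)$, which is exactly the condition that $\bbY$ is a maximal $p_j$-subscheme of $\bbY'$. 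The converse direction is identical after replacing $\bbX\leftrightarrow\bbY$, $\bbX'\leftrightarrow\bbY'$ and invoking Proposition~\ref{propSec2.5}(a) to know that $\bbX,\bbX'$ are recovered as the residual schemes of $\bbY,\bbY'$.

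The main obstacle, I expect, is the bookkeeping at the bad point $X_0=0$ and making the "localize $\Ann_{R_\bbW}$" step rigorous: $R_\bbW$ is one-dimensional, not Artinian, so one cannot apply Lemma~\ref{lemSec2.9} to it directly. The clean fix is either to work in $\overline R_\bbW$ after checking that $x_0$ is a non-zerodivisor on all the relevant cyclic modules (so that annihilators are preserved under the reduction, which is why Lemma~\ref{lemSec2.6} and Proposition~\ref{propSec2.5}(c) were set up), or to note that all of $\bbX,\bbX',\bbY,\bbY'$ are zero-dimensional so that after inverting $x_0$ the ring $R_\bbW[x_0^{-1}]_0\cong\prod_j\calO_{\bbW,p_j}$ is a finite product of Artinian local rings and the saturated ideals $I_{-/\bbW}$ are determined by, and the annihilator operation is computed factorwise via, Lemma~\ref{lemSec2.9}(b) applied to each finite Gorenstein algebra $\calO_{\bbW,p_j}$. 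Once that compatibility is in hand, everything else is the short diagram-chase above.
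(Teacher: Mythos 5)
Your overall strategy --- localize at the points of $\Supp(\bbW)$, compute the linkage annihilator factorwise via the non-degenerate pairing of Lemma~\ref{lemSec2.9}, and settle maximality by the degree count $\deg(\bbW)=\deg(\bbX)+\deg(\bbY)=\deg(\bbX')+\deg(\bbY')$ --- is exactly the paper's strategy. But your ``key observation'' contains a genuine error: you assert that $\Supp(\bbX)\cap\Supp(\bbY)=\emptyset$, so that a point of $\Supp(\bbX)$ cannot lie in $\Supp(\bbY)$. That is true only when $\bbX$ and $\bbY$ are \emph{geometrically} linked, which the proposition does not assume; here they are merely algebraically linked, and the supports may overlap. (Simplest instance: $\bbW$ a non-reduced local Gorenstein scheme of degree $2$ at a point $p$ and $\bbX$ the reduced point; then $I_{\bbY/\bbW}=\Ann_{R_\bbW}(I_{\bbX/\bbW})$ again cuts out the reduced point, so $\Supp(\bbX)=\Supp(\bbY)=\{p\}$.) The paper's proof is forced to split into two cases precisely for this reason: $p_j\in\Supp(\bbX)\setminus\Supp(\bbY)$, where $\calO_{\bbY,p_j}=\langle 0\rangle$ and $\bbY'$ acquires new length at $p_j$, versus $p_j\in\Supp(\bbX)\cap\Supp(\bbY)$, where $\fq_j\ne\langle 0\rangle$ and one must compare $\Ann_{\calO_{\bbW,p_j}}(\fq'_j)\subsetneq\Ann_{\calO_{\bbW,p_j}}(\fq_j)$ inside a nonzero local ring. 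Your reading of the conclusion as ``$\bbY'$ is $\bbY$ plus extra length at a point where $\bbY$ was empty'' only describes the first case. Relatedly, your claim that $(I_{\bbX'/\bbW})_{p_k}=(I_{\bbX/\bbW})_{p_k}=0$ for $k\ne j$ is false: this localization is $\calO_{\bbW,p_k}$ for $p_k\notin\Supp(\bbX)$ and is a nonzero proper ideal $\fq_k$ at overlap points; only the equality of the two localizations is true and needed.

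The good news is that your ideal-theoretic chase in the second paragraph never actually uses disjointness, so the repair is to delete the first paragraph's reduction and run the argument uniformly: for $k\ne j$ the local ideals of $\bbX'$ and $\bbX$ agree, hence so do their annihilators; at $p_j$ one has $\fq'_j\supsetneq\fq_j$ (properly, whether or not $\fq_j=\langle 0\rangle$), hence $\Ann(\fq'_j)\subsetneq\Ann(\fq_j)$ properly by the dimension identity of Lemma~\ref{lemSec2.9}(b), giving the surjection $\calO_{\bbY',p_j}\twoheadrightarrow\calO_{\bbY,p_j}$ with nontrivial kernel. One further point you flag but do not execute: the identification of $\theta(I_{\bbY/\bbW})$ with the product of local annihilators requires an argument, and the containment $\Lambda\subseteq\theta(I_{\bbY/\bbW})$ plus the dimension count $\dim_K\Lambda=\deg(\bbW)-\deg(\bbY)=\dim_K\theta(I_{\bbY/\bbW})$ is how the paper closes it; your sketch of this step is plausible but should be written out, since it is where the Gorenstein hypothesis on $\bbW$ and the saturation of $I_{\bbY/\bbW}$ actually enter.
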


\begin{proof}
As sets, we have $\Supp(\bbW)=\Supp(\bbX) \cup \Supp(\bbY)$
by~\cite[Proposition~5.2.2]{Mig}. Let us write
$\Supp(\bbX) = \{p_1, \dots, p_t\}$,
$\Supp(\bbY) = \{p_{s+1}, \dots, p_u\}$, and
$\Supp(\bbW) =
\{p_1, \dots, p_s, p_{s+1}, \dots, p_t, p_{t+1}, \dots, p_u\}$
with $s\le t\le u$.  Then there exist ideals
$\fq_{s+1} \subseteq \calO_{\bbW, p_{s+1}}$, $\dots$,
$\fq_{t} \subseteq \calO_{\bbW, p_{t}}$ such that
$$
\calO_{\bbX,p_j} =
\begin{cases}
\calO_{\bbW, p_j} & \mbox{ for } j = 1, \dots, s, \\
\calO_{\bbW, p_j}/\fq_j & \mbox{ for } j=s+1,\dots,t,\\
\langle0\rangle & \mbox{ for } j = t+1, \dots, u. \\
\end{cases}
$$
Consider the map
$\theta : R_\bbW \rightarrow \prod_{j=1}^u \calO_{\bbW, p_j}$
given by $f \mapsto (f_{p_1},\dots,f_{p_u})$.
According to~\cite[Lemma~1.1]{Kre}, the restriction
$\theta |_{(R_\bbW)_i}$ is an injection for $0\le i<r_{\bbW}$
and is an isomorphism for all $i \ge r_{\bbW}$.
By Proposition~\ref{propSec2.5}(b), we have
$r_\bbX \le r_\bbW$ and
$\HF_{I_{\bbX/\bbW}}(i)=\deg(\bbW)-\deg(\bbX) =\deg(\bbY)$
for $i\ge r_\bbW$. Consequently, we get
$$
\theta(I_{\bbX/\bbW}) = \theta((I_{\bbX/\bbW})_{r_{\bbW}}) =
\langle0\rangle \times\cdots\times\langle0\rangle
\times \fq_{s+1}\times\cdots\times\fq_t
\times \calO_{\bbW, p_{t+1}}\times\cdots\times \calO_{\bbW, p_{u}}
$$
and $\dim_K \theta(I_{\bbX/\bbW}) = \deg(\bbY)$.
In $\prod_{j=1}^u \calO_{\bbW, p_j}$, we set
$$
\Lambda := \calO_{\bbW, p_{1}}\times\cdots\times \calO_{\bbW, p_{s}}
\times \Ann_{\calO_{\bbW, p_{s+1}}}(\fq_{s+1})\times\cdots\times
\Ann_{\calO_{\bbW, p_{t}}}(\fq_t)
\times \langle0\rangle\times\cdots\times\langle0\rangle.
$$
It follows from Lemma~\ref{lemSec2.9} that
$\deg(\bbW) = \dim_K \Lambda + \dim_K \theta(I_{\bbX/\bbW})$.
This implies $\dim_K \Lambda = \deg(\bbW) - \deg(\bbY)$.
Now we want to verify that $\theta(I_{\bbY/\bbW}) = \Lambda$.
Since $\dim_K \theta(I_{\bbY/\bbW})=\deg(\bbW)-\deg(\bbY)
=\dim_K \Lambda$, it suffices to show that
$\Lambda\subseteq \theta(I_{\bbY/\bbW})$.
Let $i\ge 0$, let $f \in (R_\bbW)_i\setminus\{0\}$ such that
$\theta(f) \in \Lambda$, and let
$g \in (I_{\bbX/\bbW})_{k}\setminus\{0\}$ with
$k \ge \alpha_{\bbX/\bbW}$.
Then we have $f\cdot g \in (R_\bbW)_{i+k}$ and
$\theta(f\cdot g) = \mathbf{0}$.
This implies $f\cdot g = 0$. Consequently, we have
$f \in \Ann_{R_\bbW}(I_{\bbX/\bbW})=I_{\bbY/\bbW}$,
and hence $\Lambda\subseteq \theta(I_{\bbY/\bbW})$.

Now we assume that $\bbX'$ is a $p_j$-subscheme of~$\bbX$.
Then we have $\calO_{\bbX',p_k} = \calO_{\bbX,p_k}$
for all $k \in \{1, \dots, u\}\setminus\{j\}$ and
$\calO_{\bbX',p_j} = \calO_{\bbW,p_j}/\fq'_j$.
We distinguish the following two cases.

\medskip
\noindent{\bf Case (${\bf a}$)}\quad
Suppose that $1 \le j \le s$.
We see that $\fq'_j \ne \langle 0 \rangle$ and
$$
\begin{aligned}
\theta(I_{\bbX'/\bbW}) &=
\{0\}\times\cdots\times \fq'_j \times\cdots\times \{0\}
\times \fq_{s+1}\times\cdots\times\fq_t
\times\calO_{\bbW, p_{t+1}}\times\cdots\times\calO_{\bbW, p_{u}} \\
\theta(I_{\bbY'/\bbW}) &=
\calO_{\bbW, p_{1}}\times\cdots\times\Ann_{\calO_{\bbW, p_{j}}}(\fq'_{j})
\times\cdots\times  \calO_{\bbW, p_{s}}
\times \Ann_{\calO_{\bbW, p_{s+1}}}(\fq_{s+1})\times\cdots \\
&\quad \times \Ann_{\calO_{\bbW, p_{t}}}(\fq_t)
\times \{0\}\times\cdots\times\{0\}.
\end{aligned}
$$
This implies $\calO_{\bbY', p_k} = \calO_{\bbY,p_k}$
for $k \ne j$ and
$\calO_{\bbY',p_j}=\calO_{\bbW,p_j}/\Ann_{\calO_{\bbW, p_{j}}}(\fq'_{j})
\ne \langle 0 \rangle = \calO_{\bbY,p_j}$.
Hence $\bbY$ is a $p_j$-subscheme of~$\bbY'$.

\medskip
\noindent{\bf Case (${\bf b}$)}\quad
Suppose that $s+1 \le j \le t$.
We have $\fq'_j \supsetneq \fq_j$ and
$$
\begin{aligned}
\theta(I_{\bbX'/\bbW}) &=
\{0\}\times\cdots\times \{0\}
\times \fq_{s+1}\times\cdots\times\fq'_j \times\cdots\times\fq_t
\times\calO_{\bbW, p_{t+1}}\times\cdots\times\calO_{\bbW,p_{u}}\\
\theta(I_{\bbY'/\bbW}) &=
\calO_{\bbW, p_{1}}\times\cdots\times \calO_{\bbW, p_{s}}
\times \Ann_{\calO_{\bbW, p_{s+1}}}(\fq_{s+1})\times \cdots
\times \Ann_{\calO_{\bbW, p_{j}}}(\fq'_{j}) \times\cdots \\
&\quad \times \Ann_{\calO_{\bbW, p_{t}}}(\fq_t)
\times \{0\}\times\cdots\times\{0\}.
\end{aligned}
$$
This implies that $\calO_{\bbY', p_k} = \calO_{\bbY,p_k}$ for $k \ne j$ and
$\calO_{\bbY',p_j} = \calO_{\bbW,p_j}/\Ann_{\calO_{\bbW, p_{j}}}(\fq'_{j})
\ne \calO_{\bbW,p_j}/\Ann_{\calO_{\bbW, p_{j}}}(\fq_{j}) = \calO_{\bbY,p_j}$.
Again the scheme $\bbY$ is a $p_j$-subscheme of~$\bbY'$.

Conversely, if $\bbY$ is a $p_j$-subscheme of~$\bbY'$,
where $p_j \in \Supp(\bbX)$, an analogous argument
as above yields that $\bbX'$ is a $p_j$-subscheme of~$\bbX$.
\end{proof}

\medskip\bigbreak
\section{Cayley-Bacharach Property and Liaison} \label{Section3}

In this section we use liaison techniques to characterize
the Cayley-Bacharach property of a 0-dimensional scheme
$\bbX$ in~$\bbP^n_K$.
First we recall the notions of the degree of a point
in~$\bbX$ and the Cayley-Bacharach property
(see \cite[Section~4]{KLL}).

\begin{definition}
Let $d \ge 0$, let $\bbX \subseteq \bbP^n_K$ be a $0$-dimensional
scheme, and let $\Supp(\bbX)=\{p_1,\dots,p_s\}$.
\begin{enumerate}
  \item[(a)]
  For $1\le j\le s$, the {\it degree of $p_j$ in $\bbX$}
  is defined as
  $$
  \deg_{\bbX}(p_j) := \min\big\{\, \mu_{\bbX'/\bbX} \; \big| \;
  \bbX' \ \textrm{is a maximal $p_j$-subscheme of $\bbX$} \,\big\},
  $$
  where $\mu_{\bbX'/\bbX}$ is the maximal degree of a minimal
  separator of~$\bbX'$ in~$\bbX$.

  \item[(b)]
  We say that $\bbX$ has the
  {\it Cayley-Bacharach property of degree $d$}
  (in short, $\bbX$ has CBP($d$)) if $\deg_{\bbX}(p_j)\ge d+1$
  for every $j\in\{1,\dots,s\}$.
  In the case that $\bbX$ has CBP($r_\bbX-1$) we also say that
  $\bbX$ is a {\it Cayley-Bacharach scheme}.
\end{enumerate}
\end{definition}

According to Remark~\ref{remSec2.3}(a), we have
$0\le \deg_{\bbX}(p_j)\le r_\bbX$. So,
the number $r_{\bbX}-1$ is the largest degree $d\ge0$
such that $\bbX$ can have CBP($d$).
Hence it suffices to consider the Cayley-Bacharach property
in degree $d \in \{0, \dots, r_{\bbX}-1\}$.
Using standard sets of separators of~$\bbX$, we can
characterize the Cayley-Bacharach property as follows
(see \cite[Proposition~4.3]{KLL}).

\begin{proposition}\label{propSec3.2}
Let $0 \le d <r_{\bbX}$,
let $\Supp(\bbX) = \{p_1, \dots, p_s\}$, and let
$\varkappa_j = \dim \kappa(p_j)$.
Then the following statements are equivalent.
\begin{enumerate}
  \item[(a)] The scheme $\bbX$ has CBP($d$).

  \item[(b)]
  If $\bbX' \subseteq \bbX$ is a maximal $p_j$-subscheme
  and $\{f_{j1},\dots,f_{j\varkappa_j}\}\subseteq R_\bbX$ is a standard set
  of separators of~$\bbX'$ in~$\bbX$, then there exists
  $k_j\in\{1 \dots, \varkappa_j\}$ such that
  $x_0^{r_{\bbX}-d} \nmid f_{jk_j}$.

  \item[(c)]
  If $\bbX' \subseteq \bbX$ is a maximal $p_j$-subscheme
  and $\{F_{j1},\dots,F_{j\varkappa_j}\}\subseteq P$
  is a standard set of separators of~$\bbX'$ in~$\bbX$,
  then there exists $k_j\in\{1 \dots, \varkappa_j\}$ such that
  $F_{jk_j}\notin\langle X_0^{r_{\bbX}-d},(I_\bbX)_{r_\bbX}\rangle_P$.

  \item[(d)]
  For all $p_j \in \Supp(\bbX)$, every maximal
  $p_j$-subscheme $\bbX' \subseteq \bbX$ satisfies
  $$
  \dim_K(I_{\bbX'/\bbX})_{d} < \varkappa_j.
  $$
\end{enumerate}
\end{proposition}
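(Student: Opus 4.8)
The plan is to reduce everything to a single inequality. Fix an index $j$ and a maximal $p_j$-subscheme $\bbX'\subseteq\bbX$, and write $\varkappa_j=\dim_K\kappa(p_j)$. By Remark~\ref{remSec2.3}(c) we may pick a set of minimal separators $\{f^*_{j1},\dots,f^*_{j\varkappa_j}\}$ of $\bbX'$ in $\bbX$ such that
$$
(I_{\bbX'/\bbX})_i=\big\langle\, x_0^{\,i-\mu_{jk_j}}\,f^*_{jk_j}\;\big|\;\mu_{jk_j}\le i\,\big\rangle_K
\qquad\text{for } 0\le i\le r_\bbX,
$$
where $\mu_{jk_j}:=\deg(f^*_{jk_j})=\mu(e_{jk_j}s_j)$; the associated standard separators are $f_{jk_j}=x_0^{\,r_\bbX-\mu_{jk_j}}f^*_{jk_j}$, each of degree $r_\bbX$, and $\mu_{\bbX'/\bbX}=\max_{k_j}\mu_{jk_j}$. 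I will show that, for $\bbX'$ fixed, each of the conditions appearing in (b), (c), (d) is equivalent to $\mu_{\bbX'/\bbX}\ge d+1$. Since $\deg_\bbX(p_j)=\min\{\mu_{\bbX'/\bbX}\mid\bbX'\text{ a maximal }p_j\text{-subscheme}\}$ and $\bbX$ has CBP($d$) if and only if $\deg_\bbX(p_j)\ge d+1$ for all $j$, this proves the proposition.

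The \emph{key claim} is that the spanning set in the display above is $K$-linearly independent, so that $\dim_K(I_{\bbX'/\bbX})_i=\#\{k_j\mid\mu_{jk_j}\le i\}$ for $0\le i\le r_\bbX$. To see this, multiply through by $x_0^{\,r_\bbX-i}$: this is injective on $R_\bbX$ (as $x_0$ is a nonzerodivisor) and carries $x_0^{\,i-\mu_{jk_j}}f^*_{jk_j}$ to $f_{jk_j}$, so it suffices to know that $\{f_{j1},\dots,f_{j\varkappa_j}\}$ is $K$-linearly independent; but taking $i=r_\bbX$ in the display (all $\mu_{jk_j}\le r_\bbX$, since $\mu(a)\le r_\bbX$ for every $a$) shows this set spans the $\varkappa_j$-dimensional space $(I_{\bbX'/\bbX})_{r_\bbX}$, hence is a basis. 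Two consequences follow. First, $x_0\nmid f^*_{jk_j}$ in $R_\bbX$, for otherwise (using that $I_{\bbX'/\bbX}$ is $x_0$-saturated, because $x_0$ is a unit in every local ring $\calO_{\bbX,p_k}$) we would have $f^*_{jk_j}\in x_0(I_{\bbX'/\bbX})_{\mu_{jk_j}-1}=\langle x_0^{\,\mu_{jk_j}-\mu_{jm}}f^*_{jm}\mid\mu_{jm}<\mu_{jk_j}\rangle_K$, contradicting the basis property in degree $\mu_{jk_j}$. Second, again by $x_0$-saturation, $x_0^{\,r_\bbX-d}$ divides $f_{jk_j}$ in $R_\bbX$ if and only if $f_{jk_j}\in x_0^{\,r_\bbX-d}(I_{\bbX'/\bbX})_d$, which by the first consequence holds if and only if $\mu_{jk_j}\le d$.

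Given the key claim, the equivalences are immediate. Condition (d) for $\bbX'$ says $\dim_K(I_{\bbX'/\bbX})_d<\varkappa_j$, which holds iff $\mu_{jk_j}>d$ for some $k_j$, i.e. iff $\mu_{\bbX'/\bbX}\ge d+1$; quantifying over $j$ and $\bbX'$ gives (a)$\Leftrightarrow$(d). Condition (b) for $\bbX'$ says $x_0^{\,r_\bbX-d}\nmid f_{jk_j}$ for some $k_j$; by the second consequence this again means $\mu_{jk_j}>d$ for some $k_j$, hence $\mu_{\bbX'/\bbX}\ge d+1$, and since this is independent of the chosen set of separators (Remark~\ref{remSec2.3}(a)) the quantifier over such sets is harmless, so (a)$\Leftrightarrow$(b). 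Finally (b)$\Leftrightarrow$(c) is the passage between $R_\bbX$ and $P$: if $F_{jk_j}\in P$ is a degree-$r_\bbX$ representative of $f_{jk_j}$, then $x_0^{\,r_\bbX-d}\mid f_{jk_j}$ in $R_\bbX$ iff $F_{jk_j}-X_0^{\,r_\bbX-d}G\in(I_\bbX)_{r_\bbX}$ for some $G\in P_d$, i.e. iff $F_{jk_j}\in X_0^{\,r_\bbX-d}P_d+(I_\bbX)_{r_\bbX}$, which is exactly the degree-$r_\bbX$ component of $\langle X_0^{\,r_\bbX-d},(I_\bbX)_{r_\bbX}\rangle_P$.

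The step I expect to be the main obstacle is the key claim --- equivalently, the statement $x_0\nmid f^*_{jk_j}$. Under $\tilde{\imath}$ it reduces to the $K$-linear independence in $\kappa(p_j)$ of the residues $\overline{x_{0,p_j}}^{\,r_\bbX-\mu_{jk_j}}\,\overline{e_{jk_j}}$, where $x_{0,p_j}$ is the germ of $x_0$ at $p_j$; for an arbitrary $K$-basis $\{e_{j1},\dots,e_{j\varkappa_j}\}$ of $\kappa(p_j)$ this may fail, since rescaling the vectors of such a basis by varying nonzero elements of a proper field extension of $K$ need not preserve $K$-independence --- precisely the phenomenon that does not occur for $K$-rational points, where the residues $\overline{x_{0,p_j}}^{\,m}$ are scalars. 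The argument therefore hinges on the judicious choice of minimal separators furnished by Remark~\ref{remSec2.3}(c) (i.e.\ \cite[Proposition~2.5]{KLL}); the remaining steps are bookkeeping with the definitions and with the injectivity of $\tilde{\imath}$.
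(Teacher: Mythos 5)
The paper does not actually prove this proposition: it is quoted verbatim from \cite[Proposition~4.3]{KLL}, so there is no in-text argument to compare against. Judged on its own, your reduction is correct and is essentially the canonical unwinding of the definitions: with the separators of Remark~\ref{remSec2.3}(c) one gets $\dim_K(I_{\bbX'/\bbX})_i=\#\{k_j\mid \mu_{jk_j}\le i\}$, every one of (b), (c), (d) becomes ``$\mu_{\bbX'/\bbX}\ge d+1$'' for the fixed $\bbX'$, and quantifying over $j$ and $\bbX'$ recovers $\deg_\bbX(p_j)\ge d+1$, i.e.\ (a). Two small points deserve tightening. First, your dimension count $\dim_K(I_{\bbX'/\bbX})_{r_\bbX}=\varkappa_j$ silently uses $r_{\bbX'}\le r_\bbX$; it is cleaner to get linear independence of $\{f_{j1},\dots,f_{j\varkappa_j}\}$ directly from injectivity of $\tilde{\imath}$ on $(R_\bbX)_{r_\bbX}$, since the germs $e_{jk_j}s_j$ are a $K$-basis of $\calO_{\bbX,p_j}s_j\cong\kappa(p_j)$. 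This also dissolves the worry in your last paragraph: in the convention of \cite{Kre,KL} the germ of $x_0$ at each $p_j$ is $1$ (multiplication by $x_0$ is multiplication by $T_j$ under $\tilde{\imath}$ --- otherwise the set defining $\mu(a)$ would not even be upward closed), so no rescaling by non-$K$-scalars occurs and the independence is unconditional. Second, conditions (b) and (c) quantify over \emph{all} standard sets of separators, and your appeal to Remark~\ref{remSec2.3}(a) alone is a bit quick; the clean statement is that for \emph{any} minimal separator one has $x_0\nmid f^*_{jk_j}$ by minimality of $\mu(e_{jk_j}s_j)$, whence the exact power of $x_0$ dividing $f_{jk_j}$ is $r_\bbX-\mu(e_{jk_j}s_j)$, and only then does the choice-independence of $\mu_{\bbX'/\bbX}$ finish the argument. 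With these repairs the proof is complete.
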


Now we give two useful lemmas that will be used in the
proof of results below.

\begin{lemma} \label{lemSec3.3}
Let $\bbW \subseteq \bbP^n_K$ be a $0$-dimensional
arithmetically Gorenstein scheme, let $\bbX$
be a subscheme of~$\bbW$ with its residual
scheme~$\bbY$, and let $0 \le d < r_{\bbX}$.
Furthermore, let $\bbX'\subseteq\bbX$ be a maximal
$p_j$-subscheme, and let
$\{F_{j1},\dots,F_{j\varkappa_j}\}\subseteq P_{r_{\bbX}}$
be a standard set of separators of~$\bbX'$ in~$\bbX$.
Suppose that
$\langle F_{j1}, \dots, F_{j\varkappa_j} \rangle_K \nsubseteq
\langle X_0^{r_{\bbX}-d}, (I_{\bbX})_{r_{\bbX}} \rangle_P$
and $\langle F_{j1}, \dots, F_{j\varkappa_j} \rangle_K \subseteq
\langle X_0^{r_{\bbX}-d-1}, (I_{\bbX})_{r_{\bbX}} \rangle_P$,
and write $F_{jk_j} = F'_{jk_j} + X_0^{r_{\bbX}-d-1}G_{jk_j}$ with
$F'_{jk_j} \in (I_{\bbX})_{r_{\bbX}}$ and $G_{jk_j}\in P_{d+1}$.

Then there is $k_j \in \{1, \dots, \varkappa_j\}$ such that
$G_{jk_j}\notin (I_{\bbW})_{r_{\bbW}}:(I_{\bbY})_{r_{\bbW}-d-1}$.
\end{lemma}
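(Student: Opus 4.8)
The plan is to argue by contradiction: assume that for every $k_j \in \{1,\dots,\varkappa_j\}$ we have $G_{jk_j} \in (I_{\bbW})_{r_{\bbW}} : (I_{\bbY})_{r_{\bbW}-d-1}$, and derive from this that $\langle F_{j1},\dots,F_{j\varkappa_j}\rangle_K \subseteq \langle X_0^{r_{\bbX}-d}, (I_{\bbX})_{r_{\bbX}}\rangle_P$, contradicting the first hypothesis. The bridge between the colon ideal condition on the $G_{jk_j}$ and membership in $\langle X_0^{r_{\bbX}-d},(I_{\bbX})_{r_{\bbX}}\rangle_P$ should be Lemma~\ref{lemSec2.6}, applied with the value $d+1$ in place of $d$ (this is legitimate since $1 \le d+1 \le r_{\bbX}$). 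Indeed, passing to residues modulo $X_0$, the hypothesis $\overline{G}_{jk_j} \in (\overline{I}_\bbW)_{r_\bbW} : (\overline{I}_\bbY)_{\alpha_{\bbY/\bbW}+(r_\bbX-(d+1))}$ together with Lemma~\ref{lemSec2.6} would give $\overline{G}_{jk_j} \in (\overline{I}_\bbX)_{d+1}$, i.e. $G_{jk_j} \in \langle X_0, (I_\bbX)_{d+1}\rangle_P$ (as polynomials of degree $d+1$).

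\smallskip

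\noindent\textbf{Carrying the degree count through.} The next step is bookkeeping on degrees. Writing $G_{jk_j} = X_0 G'_{jk_j} + H_{jk_j}$ with $H_{jk_j} \in (I_\bbX)_{d+1}$ and $G'_{jk_j} \in P_d$, one substitutes back into $F_{jk_j} = F'_{jk_j} + X_0^{r_\bbX - d - 1}G_{jk_j}$ to obtain
$$
F_{jk_j} = F'_{jk_j} + X_0^{r_\bbX - d}G'_{jk_j} + X_0^{r_\bbX-d-1}H_{jk_j}.
$$
Here $F'_{jk_j} \in (I_\bbX)_{r_\bbX}$ by hypothesis, the middle term lies in $\langle X_0^{r_\bbX-d}\rangle_P$, and the last term $X_0^{r_\bbX-d-1}H_{jk_j}$ lies in $(I_\bbX)_{r_\bbX}$ because $H_{jk_j}\in (I_\bbX)_{d+1}$ and $r_\bbX-d-1 \ge 0$. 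Hence $F_{jk_j} \in \langle X_0^{r_\bbX-d},(I_\bbX)_{r_\bbX}\rangle_P$ for every $k_j$, so the whole $K$-span $\langle F_{j1},\dots,F_{j\varkappa_j}\rangle_K$ is contained in that ideal — the sought contradiction. This proves the existence of the required index $k_j$.

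\smallskip

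\noindent\textbf{Main obstacle.} The delicate point is matching up the two descriptions of the relevant graded piece of the colon ideal, namely checking that $(I_{\bbW})_{r_\bbW} : (I_\bbY)_{r_\bbW-d-1}$ is governed by Lemma~\ref{lemSec2.6} with the right shift. Lemma~\ref{lemSec2.6} is stated for the rings modulo $X_0$ and for the degree of $\overline{I}_\bbY$ equal to $\alpha_{\bbY/\bbW}+(r_\bbX-d')$ with $d' \in \{1,\dots,r_\bbX\}$; one must verify that $r_\bbW - d - 1 = \alpha_{\bbY/\bbW}+(r_\bbX - (d+1))$, which is exactly the relation $r_\bbW = r_\bbX + \alpha_{\bbY/\bbW}$ from Proposition~\ref{propSec2.5}(b). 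One must also be a little careful that reducing modulo $X_0$ does not lose information: since $x_0$ is a non-zerodivisor on $R_\bbW$ and all elements involved sit in the correct homogeneous degrees, the colon ideal condition in $R_\bbW$ and the one in $\overline{R}_\bbW$ agree in the degrees that matter (here $d+1 < r_\bbX \le r_\bbW$, so one stays strictly below the socle degree). Modulo these compatibility checks the argument is a short chain of containments, so I expect the proof to be brief once the indices are aligned.
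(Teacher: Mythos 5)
Your proof is correct and follows essentially the same route as the paper's: argue by contradiction, reduce modulo $X_0$, apply Lemma~\ref{lemSec2.6} with $d+1$ in place of $d$ (using $r_\bbW = r_\bbX + \alpha_{\bbY/\bbW}$ to align the degrees), decompose each $G_{jk_j}$, and substitute back into $F_{jk_j}$ to contradict the hypothesis. The only difference is cosmetic, namely which summand of $G_{jk_j}$ is named $G'_{jk_j}$ and which $H_{jk_j}$.
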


\begin{proof}
Suppose that $G_{jk_j}\in(I_\bbW)_{r_\bbW}:(I_{\bbY})_{r_\bbW-d-1}$
for all $k_j = 1, \dots, \varkappa_j$.
By modulo $X_0$ we have
$\overline{G}_{jk_j}(\overline{I}_\bbY)_{r_\bbW-d-1}
\subseteq (\overline{I}_\bbW)_{r_\bbW}$.
Note that $r_\bbW = \alpha_{\bbY/\bbW}+r_\bbX$
by Proposition~\ref{propSec2.5}(b). Thus Lemma~\ref{lemSec2.6}
yields that $\overline{G}_{jk_j} \in (\overline{I}_\bbX)_{d+1}$.
This allows us to write $G_{jk_j} = G'_{jk_j} + X_0H_{jk_j}$
with $G'_{jk_j} \in (I_\bbX)_{d+1}$ and $H_{jk_j} \in P_{d}$.
It is clear that $H_{jk_j} \in (I_{\bbX'})_{d}$.
From this we get $F_{jk_j} =
(F'_{jk_j}+X_0^{r_\bbX-d-1}G'_{jk_j})+X_0^{r_\bbX-d}H_{jk_j}$
for all $k_j = 1, \dots, \varkappa_j$. It follows that
$F_{jk_j} \in \langle X_0^{r_\bbX-d},(I_\bbX)_{r_\bbX}\rangle_P$
for all $k_j=1,\dots,\varkappa_j$.
This is a contradiction to our hypothesis, and hence
the claim is completely proved.
\end{proof}

\begin{lemma}\label{lemSec3.4}
Let $A$ be a 0-dimensional local affine $K$-algebra
with maximal ideal $\fm$, let $\fq$ be a $\fm$-primary ideal,
let $R=A/\fq$, and let $\pi: A\rightarrow R$ be the
canonical epimorphism. Let $g\in A$ be an element such that
$\pi(g)\in \Ann_R(\pi(\fm))$ is a non-zero socle element of~$R$,
and suppose $h\in \Ann_A(\fq)$ and $gh\ne0$.
\begin{enumerate}
  \item[(a)] We have $gh \in \Ann_A(\fm)$ and
  $\langle0\rangle:_{\langle g\rangle}\langle h\rangle
  \subseteq \fq$.
  \item[(b)] Every element $f\in A$ with
  $\pi(f)\in \langle \pi(g)\rangle_R\setminus\{0\}$
  satisfies $fh \ne 0$.
  \item[(c)] Let $g_1,...,g_r\in A\setminus\{0\}$.
  If the set $\{\pi(g_1),...,\pi(g_r)\}\subseteq \langle\pi(g)\rangle_R$
  is $K$-linearly independent, then the set $\{g_1h,...,g_rh\}$
  is $K$-linearly independent.
\end{enumerate}
\end{lemma}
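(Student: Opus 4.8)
The plan is to prove the three parts in order, since each builds on the previous one. For part (a), I would first show $gh \in \Ann_A(\fm)$. The idea is that $\fm h \subseteq \fm \cdot \Ann_A(\fq)$; applying $\pi$, we get $\pi(\fm h) \subseteq \pi(\fm)\pi(h)$, and since $\pi(g)$ kills $\pi(\fm)$, for each $m \in \fm$ we have $\pi(g)\pi(mh) = \pi(m)\pi(gh)$. Here I would instead argue directly: $g(mh) = (gh)m$, and I want $(gh)m = 0$ for all $m \in \fm$. Applying $\pi$ gives $\pi(gh)\pi(m) = \pi(g)\pi(h)\pi(m) = \pi(g)\pi(hm)$; since $h \in \Ann_A(\fq)$ and $\fq \subseteq \fm$, we have $hm \in hA$, which need not vanish — so this needs more care. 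The cleaner route: $gmh$ as an element of $A$; note $\pi(g m h) = \pi(g)\cdot\pi(mh)$, and $\pi(mh) \in \pi(\fm)$, so $\pi(g)\pi(mh) = 0$, hence $gmh \in \fq$. But also $gmh = (mh)g$ and $mh \in \fm\cdot\Ann_A(\fq)$; more usefully, $gmh \in \langle g \rangle$ and $gmh \in \fq \cdot$ (nothing special). The correct argument is: $gmh \in \fq$ and simultaneously $h \in \Ann_A(\fq)$ forces $h \cdot (gm \fq) = 0$; but we want $gmh = 0$ itself. Observe $gm \in \fm$ (as $g \in A$, $m \in \fm$), so $gm \in \fm$; and we need $h \cdot \fm \cdot g$-type control. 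Actually the slick way: $gmh$ lies in $\langle g\rangle$; apply $\pi$ to see it lies in $\fq$; so $gmh \in \langle g \rangle$ with $\pi(gmh)=0$, i.e. $gmh \in \langle g \rangle \cap \fq$. Since $h\in\Ann_A(\fq)$ and $gmh$ — hmm, this still needs the bridge $\langle g\rangle \cap \fq \subseteq \Ann(h)$? That is exactly the second claim of (a) rephrased. So I would prove the two statements of (a) \emph{together}: first establish $\langle 0\rangle :_{\langle g\rangle} \langle h \rangle \subseteq \fq$ by taking $ag \in \langle g \rangle$ with $agh = 0$, applying $\pi$ to get $\pi(a)\pi(g)\pi(h) = 0$, and using part (b)'s mechanism (which I'd prove first, logically) that multiplication by $h$ is injective on $\langle \pi(g)\rangle_R \setminus\{0\}$ to conclude $\pi(a)\pi(g) = 0$, i.e. $\pi(ag) = 0$, i.e. $ag \in \fq$. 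Then $gh \in \Ann_A(\fm)$ follows: for $m \in \fm$, $(gh)m = g(hm)$; we have $hm \in h\fm$ and $\pi(g)\pi(hm) = \pi(g)\pi(h)\pi(m) = \pi(h)\cdot\pi(g)\pi(m) = 0$ since $\pi(g)\pi(m)=0$, so $g\cdot hm \in \fq$; but $g(hm) = (gh)m$ and $gh \in \Ann_A(\fq)$ gives nothing directly — instead note $hm \in \Ann_A(\fq)$ too (as $h \in \Ann_A(\fq)$ and $\Ann_A(\fq)$ is an ideal), so $g\cdot hm \in \langle g\rangle$, and $\pi(g\cdot hm) \in \pi(\langle g\rangle)$ with $\pi(ghm)=0$; applying the first claim of (a), $ghm \in \langle 0 \rangle :_{\langle g\rangle}\langle h\rangle$? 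Not quite the right form. Let me restructure: prove (b) first, then the containment in (a) via (b), then $gh \in \Ann_A(\fm)$ via the containment.

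So the logical order I would actually follow is: \textbf{(b) first}, then the second assertion of \textbf{(a)}, then the first assertion of \textbf{(a)}, then \textbf{(c)}. For (b): suppose $\pi(f) \in \langle \pi(g)\rangle_R \setminus \{0\}$, so $\pi(f) = c\,\pi(g)$ for some unit or nonzero $c$... more precisely $\pi(f) = \pi(g)\pi(a)$ for some $a \in A$ (since $\langle \pi(g)\rangle_R = \pi(g)R$), with $\pi(f) \ne 0$. Then $fh$ has $\pi(fh) = \pi(f)\pi(h) = \pi(g)\pi(a)\pi(h)$. Now here is the key point: $\pi(g)$ is a \emph{socle element}, so $\pi(g)\pi(a) = \lambda \pi(g)$ for some $\lambda \in K$ when $R$ is ... no — $\pi(g)R \cong R/\Ann_R(\pi(g))$ and since $\pi(g) \in \Ann_R(\pi(\fm))$, we get $\pi(\fm) \subseteq \Ann_R(\pi(g))$, so $\Ann_R(\pi(g))$ is either $\pi(\fm)$ or all of $R$; since $\pi(g) \ne 0$ it is $\pi(\fm)$, hence $\pi(g)R \cong R/\pi(\fm) \cong K$, i.e. $\langle\pi(g)\rangle_R$ is one-dimensional, spanned by $\pi(g)$. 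Therefore $\pi(f) = \lambda\pi(g)$ with $\lambda \in K^\times$, and $\pi(fh) = \lambda\,\pi(g)\pi(h) = \lambda\,\pi(gh)$. So (b) reduces to: $gh \ne 0 \implies \pi(gh) \ne 0$, OR handle the case $\pi(gh) = 0$ but $gh \ne 0$ separately. Hmm — if $\pi(gh) = 0$ then the $\pi$-argument fails and I need $fh \ne 0$ directly. In that case $fh$ and $gh$ differ: $f = \lambda g + q$ with $q \in \fq$, so $fh = \lambda gh + qh = \lambda gh + 0$ since $h \in \Ann_A(\fq)$ and $q \in \fq$! So $fh = \lambda gh$ with $\lambda \ne 0$, and $gh \ne 0$ by hypothesis, hence $fh \ne 0$. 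That is clean and avoids the $\pi(gh)$ case distinction entirely. So (b): write $f = \lambda g + q$, $q \in \fq$, $\lambda \in K^\times$ (using one-dimensionality of $\langle\pi(g)\rangle_R$), then $fh = \lambda gh \ne 0$.

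With (b) in hand, the second assertion of (a): if $ag \in \langle g\rangle$ with $(ag)h = 0 = a(gh)$, suppose $ag \notin \fq$, i.e. $\pi(ag) = \pi(a)\pi(g) \ne 0$, so $\pi(ag) \in \langle\pi(g)\rangle_R \setminus\{0\}$; by (b) applied with $f := ag$, we get $(ag)h \ne 0$, contradiction. Hence $ag \in \fq$, proving $\langle 0\rangle :_{\langle g\rangle}\langle h\rangle \subseteq \fq$. For $gh \in \Ann_A(\fm)$: take $m \in \fm$; then $(gh)m = (gm)h$, and $gm \in \fm \subseteq A$; I claim $(gm)h = 0$. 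Indeed $\pi(gm) = \pi(g)\pi(m) = 0$ since $\pi(g)$ kills $\pi(\fm)$, so $gm \in \fq$, and $h \in \Ann_A(\fq)$ gives $(gm)h = 0$. Done — and this is much shorter than my earlier fumbling. Finally (c): suppose $\sum_i \lambda_i (g_i h) = 0$ with $\lambda_i \in K$; then $\big(\sum_i \lambda_i g_i\big)h = 0$. Apply $\pi$: $\pi\big(\sum_i \lambda_i g_i\big) = \sum_i \lambda_i \pi(g_i) \in \langle \pi(g)\rangle_R$. If this is nonzero, then by (b) (with $f := \sum \lambda_i g_i$, whose image is a nonzero element of $\langle\pi(g)\rangle_R$) we'd get $fh \ne 0$, contradiction; so $\sum_i \lambda_i\pi(g_i) = 0$, and $K$-linear independence of $\{\pi(g_1),\dots,\pi(g_r)\}$ forces all $\lambda_i = 0$. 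The main obstacle throughout is getting the bookkeeping of "$\pi$ vs.\ lift to $A$" exactly right — in particular noticing early that $\langle \pi(g)\rangle_R$ is \emph{one-dimensional} over $K$ (because $\pi(g)$ is a nonzero socle element of the local ring $R$), which collapses all the would-be case distinctions; and then observing that multiplication by $h\in\Ann_A(\fq)$ annihilates the "$\fq$-part" of any lift, so that $fh$ depends only on the scalar $\lambda$ — that single observation drives (b), (c), and the second half of (a).
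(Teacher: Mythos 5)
There is a genuine gap, and it sits at the step you yourself identify as the linchpin: the claim that $\langle\pi(g)\rangle_R\cong R/\pi(\fm)\cong K$ is one-dimensional over $K$, so that $\pi(f)=\lambda\pi(g)$ with $\lambda\in K^\times$. You correctly get $\Ann_R(\pi(g))=\pi(\fm)$ and hence $\langle\pi(g)\rangle_R\cong R/\pi(\fm)$, but $R/\pi(\fm)\cong A/\fm$ is the \emph{residue field} of the local affine $K$-algebra $A$, which is only a finite extension of $K$, not $K$ itself (e.g.\ $A=\bbQ[x]/\langle x^2+1\rangle$; the paper is explicitly set up for non-$K$-rational points, with $\varkappa_j=\dim_K\kappa(p_j)>1$). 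A quick sanity check exposes the problem: if $\langle\pi(g)\rangle_R$ really were one-dimensional over $K$, then part (c) would be vacuous for $r\ge 2$, whereas in the paper it is applied with $r=\varkappa_j$ elements $(G_{j1})_{p_j},\dots,(G_{j\varkappa_j})_{p_j}$ that are $K$-linearly independent inside $\langle(G_{j1})_{p_j}\rangle$. Consequently your decomposition $f=\lambda g+q$ with $\lambda\in K$, $q\in\fq$, is not available in general, and the proof of (b) as written does not go through.

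The repair is short and brings you exactly to the paper's argument: from $\pi(f)=\pi(a)\pi(g)$ for some $a\in A$ with $\pi(f)\ne 0$, conclude that $a\notin\fm$ (otherwise $\pi(a)\pi(g)=0$ since $\pi(g)$ kills $\pi(\fm)$), i.e.\ $a$ is a \emph{unit} of $A$; then $f=ag+q$ with $q\in\fq$, so $fh=a(gh)+qh=a(gh)\ne 0$ because $gh\ne 0$ and $a$ is a unit. With that fix the rest of your plan is sound. Your logical ordering (b) $\Rightarrow$ colon-ideal part of (a) $\Rightarrow$ socle part of (a) $\Rightarrow$ (c) is a legitimate rearrangement: the paper instead proves the colon-ideal containment directly by showing $\Ann_A(gh)=\fm$ (using that $gh$ is a nonzero socle element of $A$ and units do not annihilate it), while you deduce it from (b); both are correct, and your derivations of $gh\in\Ann_A(\fm)$ (via $gm\in\fq$ for $m\in\fm$) and of (c) (apply (b) to $\sum_i\lambda_ig_i$) coincide with the paper's.
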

\begin{proof}
For (a), let $a\in\fm$ be a non-zero element.
In $R$ we have $\pi(a)\in\pi(\fm)$, and so we get
$\pi(ag)=\pi(a)\pi(g) =0$ or $ag\in\fq$.
It follows that $agh =0$. Hence $gh\in\Ann_A(\fm)$.
Moreover, for
$f\in\langle0\rangle:_{\langle g\rangle}\langle h\rangle$
we have $f=gf'$ for some $f'\in A$ and $gf'h=fh=0$.
Since $gh$ is a socle element of~$A$ and $bgh\ne 0$
for $b\in A\setminus\fm$, we have $\Ann_A(gh)=\fm$.
This implies $f'\in\fm$. Thus $f=gf' \in\fq$.

To prove (b), we consider an element $f\in A$
with $\pi(f)\in \langle \pi(g)\rangle_R\setminus\{0\}$.
Writing $\pi(f)=\pi(g)\pi(f')$ for some $f'\in A\setminus\{0\}$,
we see that $f'\notin \fm$ is a unit and $f= gf'+f''$
with $f''\in\fq$. So, we obtain
$fh = f'gh + f''h = f'gh \ne 0$.

Next, we prove (c).
Suppose that there are $a_1,...,a_r\in K$ such that
$a_1g_1h+\cdots+ a_rg_rh =(a_1g_1+\cdots+a_rg_r)h =0$.
Since $\pi(a_1g_1+\cdots+a_rg_r)\in\langle\pi(g)\rangle_R$,
it follows from (b) that
$\pi(a_1g_1+\cdots+a_rg_r)=a_1\pi(g_1)+\cdots+a_r\pi(g_r) =0$.
By assumption, we get $a_1=\cdots=a_r=0$.
\end{proof}

The first main result of this section is the following characterization
of the Cayley-Bacharach property, which is a generalization
of results for finite sets of $K$-rational points
or for the case that $K$ is an algebraically closed field
found in~\cite[Theorem~4.6]{GKR} and~\cite[Theorem~4.1]{KR1995}.
For $i\ge 0$ we write $F_p$ for the image in~$\calO_{\bbW,p}$
of $F\in P_i$ under the composition map
$P_i \rightarrow (R_\bbW)_i \rightarrow
\prod_{p \in \Supp(\bbW)} \calO_{\bbW,p}
\rightarrow \calO_{\bbW,p}$.
Notice that $F\in I_\bbW$ if and only if $F_p=0$ for all
$p\in \Supp(\bbW)$ (cf.~\cite[Lemma~1.1]{Kre}).

\begin{thm}\label{thmSec3.4}
Let $\bbW \subseteq \bbP^n_K$ be a $0$-dimensional
arithmetically Gorenstein scheme, let $\bbX$
be a subscheme of $\bbW$, let $\bbY$ be the residual
scheme of~$\bbX$ in~$\bbW$, and let $0 \le d \le r_{\bbX}-1$.
Then the following statements are equivalent.

\begin{enumerate}
  \item[(a)] The scheme $\bbX$ has CBP($d$).

  \item[(b)]
  Every subscheme $\bbY'\subseteq \bbW$ containing $\bbY$
  as a maximal $p_j$-subscheme, where $p_j\in\Supp(\bbX)$,
  satisfies $\HF_{I_{\bbY/\bbY'}}(r_\bbW-d-1)>0$.

  \item[(c)] We have $I_\bbW : (I_{\bbY})_{r_\bbW-d-1} = I_\bbX$.

  \item[(d)] We have
  $(I_\bbW)_{r_\bbW-1}:(I_\bbY)_{r_\bbW-d-1}=(I_\bbX)_{d}$.

  \item[(e)] For all $p_j\in\Supp(\bbX)$ and for every
  maximal $p_j$-subscheme $\bbX'\subseteq\bbX$ with standard
  set of separators $\{F_{j1},...,F_{j\varkappa_j}\}$
  there exists a homogeneous element
  $H_j\in(I_{\bbY})_{r_\bbW-d-1}$ such that
  $H_j\cdot\langle F_{j1},...,F_{j\varkappa_j}\rangle_K
  \nsubseteq I_\bbW$.
\end{enumerate}
\end{thm}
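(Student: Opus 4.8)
The plan is to prove the cycle $(a)\Rightarrow(e)\Rightarrow(c)\Rightarrow(d)\Rightarrow(a)$ and, separately, the equivalence $(a)\Leftrightarrow(b)$. Throughout I would use the standard consequences of $\bbW$ being arithmetically Gorenstein: $x_0$ is a non-zerodivisor on $R_\bbW$, every localization $\calO_{\bbW,p}$ is a Gorenstein Artinian local ring (so Lemma~\ref{lemSec2.9} applies to it, in particular $\fq=\Ann_{\calO_{\bbW,p}}(\Ann_{\calO_{\bbW,p}}(\fq))$ for ideals $\fq$), and ``$F\in I_\bbW$ iff $F_p=0$ for all $p\in\Supp(\bbW)$'' in every degree. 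I also use Proposition~\ref{propSec2.5} freely.

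The source of $(a)\Leftrightarrow(b)$ is a numerical identity. Given a maximal $p_j$-subscheme $\bbX'\subseteq\bbX$, let $\bbY'$ be its residual scheme in $\bbW$; by Proposition~\ref{propSec2.10} this is a bijection between such $\bbX'$ (with $p_j\in\Supp(\bbX)$) and the subschemes $\bbY'\subseteq\bbW$ containing $\bbY$ as a maximal $p_j$-subscheme. Applying Proposition~\ref{propSec2.5}(c) to the two linked pairs $(\bbX,\bbY)$ and $(\bbX',\bbY')$ in $\bbW$ (which share $r_\bbW$) and using $\deg(\bbX)-\deg(\bbX')=\varkappa_j$ gives
$$
\dim_K(I_{\bbY/\bbY'})_i=\varkappa_j-\dim_K(I_{\bbX'/\bbX})_{r_\bbW-i-1}\qquad(i\in\bbZ).
$$
Taking $i=r_\bbW-d-1$ and combining with Proposition~\ref{propSec3.2}(d) yields $(a)\Leftrightarrow(b)$, and, more importantly, the \emph{bridge} used below: CBP($d$) fails through a fixed $\bbX'$ exactly when $(I_\bbY)_{r_\bbW-d-1}=(I_{\bbY'})_{r_\bbW-d-1}$, i.e. exactly when every form in $(I_\bbY)_{r_\bbW-d-1}$ vanishes on $\bbY'$, so that then $(H_j)_{p_j}\in\ker(\calO_{\bbW,p_j}\to\calO_{\bbY',p_j})$ for every $H_j\in(I_\bbY)_{r_\bbW-d-1}$.

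For $(a)\Rightarrow(e)$ I would fix $p_j\in\Supp(\bbX)$ and a maximal $p_j$-subscheme $\bbX'$ with standard separators $F_{j1},\dots,F_{j\varkappa_j}$, and set $\mu:=\mu_{\bbX'/\bbX}$. Since $x_0^{r_\bbX-m}\mid f_{jk}$ in $R_\bbX$ precisely when $\deg(f^*_{jk})\le m$, Proposition~\ref{propSec3.2}(b) forces $\mu\ge d+1$, while $\langle F_{j1},\dots,F_{j\varkappa_j}\rangle_K\subseteq\langle X_0^{r_\bbX-\mu},(I_\bbX)_{r_\bbX}\rangle_P$ but is not contained in $\langle X_0^{r_\bbX-\mu+1},(I_\bbX)_{r_\bbX}\rangle_P$. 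Hence Lemma~\ref{lemSec3.3}, applied with its ``$d$'' set to $\mu-1$, provides an index $k$ and an $H\in(I_\bbY)_{r_\bbW-\mu}$ with $G_{jk}H\notin I_\bbW$, where $F_{jk}=F'_{jk}+X_0^{r_\bbX-\mu}G_{jk}$ with $F'_{jk}\in(I_\bbX)_{r_\bbX}$. Setting $H_j:=X_0^{\mu-d-1}H\in(I_\bbY)_{r_\bbW-d-1}$, the elements $H_jF_{jk}$ and $X_0^{r_\bbX-d-1}G_{jk}H$ differ by $X_0^{\mu-d-1}F'_{jk}H\in I_\bbX I_\bbY\subseteq I_\bbW$, while the latter is not in $I_\bbW$ ($x_0$ is a non-zerodivisor on $R_\bbW$), so $H_jF_{jk}\notin I_\bbW$; this is (e) for $\bbX'$. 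For $(e)\Rightarrow(c)$: one always has $I_\bbX\subseteq I_\bbW:(I_\bbY)_{r_\bbW-d-1}=:J$, and $J$ is saturated with $x_0$ a non-zerodivisor on $P/J$, so $J=I_{\bbX''}$ for a $0$-dimensional $\bbX''\subseteq\bbX$; if $\bbX''\subsetneq\bbX$, pick a point $p_j$ at which $\calO_{\bbX,p_j}\twoheadrightarrow\calO_{\bbX'',p_j}$ is proper, a nonzero socle element $s_j$ of its kernel, and the corresponding maximal $p_j$-subscheme $\bbX'$ (so $\bbX''\subseteq\bbX'$), whose standard separators then lie in $(I_{\bbX'})_{r_\bbX}\subseteq J_{r_\bbX}$, forcing $H_jF_{jk}\in I_\bbW$ for all $H_j\in(I_\bbY)_{r_\bbW-d-1}$ and all $k$ — contradicting (e). Finally $(c)\Rightarrow(d)$ is just the restriction of the ideal equality to degree $d$ (a degree-$d$ form times $(I_\bbY)_{r_\bbW-d-1}$ lands in $(I_\bbW)_{r_\bbW-1}$).

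It remains to do $(d)\Rightarrow(a)$, which I would argue contrapositively. If CBP($d$) fails, Proposition~\ref{propSec3.2}(b) gives $p_j\in\Supp(\bbX)$ and a maximal $p_j$-subscheme $\bbX'$ with $\mu_{\bbX'/\bbX}\le d$; then each standard separator satisfies $f_{jk}=x_0^{r_\bbX-d}\!\cdot\!\big(x_0^{d-\deg(f^*_{jk})}f^*_{jk}\big)$ with $x_0^{d-\deg(f^*_{jk})}f^*_{jk}\in(R_\bbX)_d\setminus\{0\}$, so we may write $F_{jk}=X_0^{r_\bbX-d}G_{jk}+A_{jk}$ with $A_{jk}\in(I_\bbX)_{r_\bbX}$ and $G_{jk}\in P_d\setminus(I_\bbX)_d$, and the aim is $G_{jk}(I_\bbY)_{r_\bbW-d-1}\subseteq I_\bbW$, contradicting (d). By the pointwise criterion it suffices to show $(G_{jk}H_j)_p=0$ for each $p\in\Supp(\bbW)$: this is trivial at points of $\Supp(\bbY)$ (where $(H_j)_p=0$) and at points of $\Supp(\bbX)\setminus\Supp(\bbY)$ other than $p_j$ (where $(G_{jk})_p=0$, the germ of $f^*_{jk}$ vanishing there), so only $p=p_j$ remains. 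There the bridge gives $(H_j)_{p_j}\in\ker(\calO_{\bbW,p_j}\to\calO_{\bbY',p_j})=\Ann_{\calO_{\bbW,p_j}}(\fq'_j)$, where $\calO_{\bbX',p_j}=\calO_{\bbW,p_j}/\fq'_j$; and since $(G_{jk})_{p_j}$ reduces modulo $\fq_j$ (where $\calO_{\bbX,p_j}=\calO_{\bbW,p_j}/\fq_j$) to a multiple $e_{jk}s_j\in\langle s_j\rangle$ of the socle element defining $\bbX'$, it lies in $\fq'_j$, whence $(G_{jk})_{p_j}(H_j)_{p_j}\in\fq'_j\cdot\Ann_{\calO_{\bbW,p_j}}(\fq'_j)=\langle0\rangle$; when $p_j\notin\Supp(\bbY)$ this degenerates to ``all of $(I_\bbY)_{r_\bbW-d-1}$ vanishes at $p_j$ and $(G_{jk})_{p_j}$ is a socle element'', and Lemma~\ref{lemSec3.4} packages exactly this local step. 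The step I expect to be the main obstacle is precisely this analysis at a point $p_j\in\Supp(\bbX)\cap\Supp(\bbY)$, a configuration absent from the $K$-rational, geometrically linked setting of~\cite{GKR}: one must juggle simultaneously the germ of a lifted standard separator of $\bbX'$, the primary ideal $\fq'_j$ cutting $\bbX'$ out locally, and the annihilator ideal cutting $\bbY'$ out locally inside $\calO_{\bbW,p_j}$ (via Lemmas~\ref{lemSec2.9} and~\ref{lemSec3.4}), and one must align all of this with the numerical identity behind $(a)\Leftrightarrow(b)$; a secondary, more routine nuisance is the degree bookkeeping — matching the ``level $r_\bbW-1$'' of (d) with the ``level $r_\bbW$'' at which Lemmas~\ref{lemSec2.6} and~\ref{lemSec3.3} are phrased, and matching $(I_\bbY)_{r_\bbW-\mu}$ with $(I_\bbY)_{r_\bbW-d-1}$ — which I would handle by multiplying through by powers of $X_0$ and using that $x_0$ is a non-zerodivisor on $R_\bbW$.
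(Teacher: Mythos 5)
Your proof is correct in substance but takes a genuinely different route through the equivalences. The paper proves the single cycle (a)$\Rightarrow$(b)$\Rightarrow$(c)$\Rightarrow$(d)$\Rightarrow$(e)$\Rightarrow$(a), whereas you prove (a)$\Leftrightarrow$(b) as a standalone numerical equivalence and run the cycle (a)$\Rightarrow$(e)$\Rightarrow$(c)$\Rightarrow$(d)$\Rightarrow$(a). Your (a)$\Leftrightarrow$(b) via Propositions~\ref{propSec2.10}, \ref{propSec2.5}(c) and \ref{propSec3.2}(d) is exactly the paper's (a)$\Rightarrow$(b) argument, correctly observed to be reversible. Your (a)$\Rightarrow$(e) applies Lemma~\ref{lemSec3.3} once at the level $\mu=\mu_{\bbX'/\bbX}\ge d+1$ and is cleaner than the paper's (d)$\Rightarrow$(e), which must split into cases according to whether some minimal separator has degree $\le d$. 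Your (e)$\Rightarrow$(c) (saturate the colon ideal, find a maximal $p_j$-subscheme containing the resulting $\bbX''$, note its separators kill $(I_\bbY)_{r_\bbW-d-1}$ modulo $I_\bbW$) replaces the paper's (b)$\Rightarrow$(c), which instead passes to the residual scheme $\bbY'$ and contradicts $\HF_{I_{\bbY/\bbY'}}(r_\bbW-d-1)>0$. The largest divergence is the closing step: the paper's (e)$\Rightarrow$(a) builds a maximal $p_j$-subscheme $\bbW'$ of $\bbW$ from the products $G_{jk}H_j$ (this is where Lemma~\ref{lemSec3.4} is really needed) and contradicts the fact that the arithmetically Gorenstein scheme $\bbW$ is a Cayley--Bacharach scheme; your (d)$\Rightarrow$(a) is a direct germ-by-germ computation showing $G_{jk}\in(I_\bbW)_{r_\bbW-1}:(I_\bbY)_{r_\bbW-d-1}\setminus(I_\bbX)_d$, using the ``bridge'' $(I_\bbY)_{r_\bbW-d-1}=(I_{\bbY'})_{r_\bbW-d-1}$ together with $\fq'_j\cdot\Ann_{\calO_{\bbW,p_j}}(\fq'_j)=\langle0\rangle$, which is more local and arguably more transparent. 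One slip to repair: at a point $p\in\Supp(\bbX)\cap\Supp(\bbY)$ with $p\ne p_j$ it is \emph{not} true that $(H_j)_p=0$; there $(H_j)_p$ only lies in $\Ann_{\calO_{\bbW,p}}(\fq_p)$ while $(G_{jk})_p$ lies in $\fq_p=\ker(\calO_{\bbW,p}\to\calO_{\bbX,p})$, so the product still vanishes --- the same annihilator argument you already use at $p_j$, just applied at the other common support points. With that one-line correction the argument is complete.
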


\begin{proof}
First we prove the implication (a)$\Rightarrow$(b).
Let $p_j \in \Supp(\bbX)$, let $\varkappa_j =\dim_K\kappa(p_j)$,
let $\bbY'\subseteq \bbW$ be a subscheme containing $\bbY$
as a maximal $p_j$-subscheme, and
let $\bbX'$ be the residual scheme of~$\bbY'$ in~$\bbW$.
Proposition~\ref{propSec2.10} shows that
$\bbX'$ is exactly a maximal $p_j$-subscheme of~$\bbX$
of degree $\deg(\bbX')=\deg(\bbX)-\varkappa_j$.
By Proposition~\ref{propSec2.5}, we observe that
$r_{\bbX'}+\alpha_{\bbY'/\bbW}=r_\bbW=r_\bbX+\alpha_{\bbY/\bbW}$,
and $\HF_{I_{\bbY/\bbW}}(i)=\deg(\bbX)-\HF_{\bbX}(r_\bbW-i-1)$
and $\HF_{I_{\bbY'/\bbW}}(i)
=\deg(\bbX')-\HF_{\bbX'}(r_\bbW-i-1)$
for all $i\in \bbZ$. So, for all $i\in \bbZ$, we have
$\HF_{I_{\bbY/\bbY'}}(i)=
\varkappa_j - \HF_{I_{\bbX'/\bbX}}(r_\bbW-i-1)$.
According to Proposition~\ref{propSec3.2}, the Hilbert function
of $I_{\bbX'/\bbX}$ satisfies
$\HF_{I_{\bbX'/\bbX}}(d) < \varkappa_j$.
Consequently, we get
$\HF_{I_{\bbY/\bbY'}}(r_\bbW-d-1) =\varkappa_j -
\HF_{I_{\bbX'/\bbX}}(d)> 0$, as wanted.

Now we prove the implication (b)$\Rightarrow$(c).
Clearly, $I_\bbX \subseteq I_\bbW:(I_\bbY)_{r_\bbW-d-1}$.
Suppose for a contradiction that
$F \in I_\bbW : (I_\bbY)_{r_\bbW-d-1}$
and $F \notin I_\bbX$.
There is a point $p_j\in \Supp(\bbX)$
such that $F_{p_j} \ne 0$. By~\cite[Lemma~4.5.9(a)]{KR2016}
there is $a_j \in \calO_{\bbX,p_j}$ such that
$a_j\cdot F_{p_j}$ is a socle element of~$\calO_{\bbX,p_j}$.
This socle element defines a maximal $p_j$-subscheme
$\bbX'$ of $\bbX$ by~\cite[Proposition~4.2]{KL}.
Then the residual scheme $\bbY'$ of~$\bbX'$ in~$\bbW$
satisfies $\HF_{I_{\bbY/\bbY'}}(r_\bbW-d-1)>0$ by
Proposition~\ref{propSec2.10} and (b).
On the other hand, letting $G \in (I_\bbY)_{r_\bbW-d-1}$,
then $FG \in I_\bbW$ and $G\cdot I_\bbX\subseteq I_\bbW$.
Since $I_\bbW$ is saturated, we have
$G\cdot \langle F,I_\bbX \rangle^{\rm sat} \subseteq I_\bbW$.
So, $G\cdot I_{\bbX'}\subseteq I_\bbW$
or $G\in(I_{\bbY'})_{r_\bbW-d-1}$, as
$I_{\bbX'} \subseteq \langle F,I_\bbX \rangle^{\rm sat}$.
Hence we get $\HF_{I_{\bbY/\bbY'}}(r_\bbW-d-1)=0$, a contradiction.

Moreover, the implication (c)$\Rightarrow$(d) is clear.
Next, we prove the implication (d)$\Rightarrow$(e).
Let $\bbX'\subseteq\bbX$ be a maximal $p_j$-subscheme with
set of minimal separators $\{F^*_{j1},\dots,F^*_{j\varkappa_j}\}$.
If there exists some index $k_j\in\{1,\dots,\varkappa_j\}$
such that $\deg(F^*_{jk_j}) \le d$,
then $G_{jk_j} = X_0^{d-\deg(F^*_{jk_j})}F^*_{jk_j}
\notin (I_\bbX)_d$, and so claim (d) implies
$G_{jk_j} \notin (I_\bbW)_{r_\bbW-1}:(I_\bbY)_{r_\bbW-d-1}$.
Let $H_j \in (I_\bbY)_{r_\bbW-d-1}\setminus\{0\}$
be such that $G_{jk_j}H_j \notin (I_\bbW)_{r_\bbW-1}$.
Since $X_0$ is a non-zerodivisor for $R_\bbW$, we have
$F_{jk_j}H_j=X_0^{r_\bbX-d}G_{jk_j}H_j\notin I_\bbW$.
In case $\deg(F^*_{jk_j})>d$ for all $k_j=1,\dots,\varkappa_j$,
we see that $F_{jk_j} \notin \langle\, X_0^{r_{\bbX}-d},
(I_{\bbX})_{r_{\bbX}} \,\rangle_P$.
Let $1\le \delta \le r_\bbX-d$ be the smallest number such that
$\langle F_{j1},\dots,F_{jk_j}\rangle_K \nsubseteq
\langle X_0^\delta,(I_\bbX)_{r_\bbX}\rangle_P$.
Write $F_{jk_j} = F'_{jk_j} + X_0^{\delta-1}G_{jk_j}$
with $F'_{jk_j} \in (I_{\bbX})_{r_\bbX}$ and
$G_{jk_j} \in P_{r_\bbX-\delta+1}$.
Then Lemma~\ref{lemSec3.3} yields that 
$G_{jk_j} \notin (I_\bbW)_{r_\bbW} :
(I_\bbY)_{\alpha_{\bbY/\bbW}+\delta-1}$
for some $k_j \in \{1, \dots, \varkappa_j\}$.
So, there is an element
$\widetilde{H}_j\in(I_\bbY)_{\alpha_{\bbY/\bbW}+\delta-1}$
such that $G_{jk_j}\widetilde{H}_j \notin (I_\bbW)_{r_\bbW}$.
Set $H_j=X_0^{r_\bbX-d-\delta}\widetilde{H}_j
\in (I_\bbY)_{r_\bbW-d-1}$.
Since $F'_{jk_j}\widetilde{H}_j\in I_\bbW$, we get
$F_{jk_j}H_j \notin I_\bbW$.

Finally, we prove the implication (e)$\Rightarrow$(a).
For a contradiction, assume that $\bbX$ does not have CBP($d$),
and let $\bbX'\subseteq\bbX$ be a maximal $p_j$-subscheme
such that its minimal separators satisfies
$\deg(F^*_{jk_j}) \le d$ for all $k_j=1,\dots,\varkappa_j$.
Set $G_{jk_j} = X_0^{d-\deg(F^*_{jk_j})}F^*_{jk_j}$
for $k_j=1,\dots,\varkappa_j$.
By (e) there exists $H_j \in (I_\bbY)_{r_\bbW-d-1}$
and some $k_j\in\{1,...,\varkappa_j\}$ such that
$G_{jk_j}H_j\notin (I_\bbW)_{r_\bbW-1}$. W.l.o.g. assume that
$G_{j1}H_j\notin (I_\bbW)_{r_\bbW-1}$.
Notice that, as sets, $\Supp(\bbW)=\Supp(\bbX)\cup\Supp(\bbY)$.
In~$\calO_{\bbW,p_j}$, we have $(G_{j1}H_j)_{p_j}\ne0$ and
$(G_{j1}H_j)_p=0$ for any $p\in \Supp(\bbW)\setminus\{p_j\}$.
Also, by writing $\calO_{\bbX,p_j} = \calO_{\bbW,p_j}/\fq_j$
for some ideal $\fq_j$ of~$\calO_{\bbW,p_j}$, we have
$\fq_j \cdot (H_j)_{p_j} = \langle0\rangle$ in~$\calO_{\bbW,p}$
and $(G_{j1})_{p_j} \in \calO_{\bbX,p_j}$ is a socle element
with $(G_{jk_j})_{p_j} \in
\langle (G_{j1})_{p_j}\rangle_{\calO_{\bbX,p_j}} \setminus\{0\}$
for all $k_j=1,...,\varkappa_j$.
In particular, by the definition of minimal separators,
the set $\{(G_{j1})_{p_j},...,(G_{j\varkappa_j})_{p_j}\}$
is $K$-linearly independent.
Thus Lemma~\ref{lemSec3.4} yields that
$(G_{j1}H_j)_{p_j}$ is a socle element of~$\calO_{\bbW,p_j}$
and $\{(G_{j1}H_j)_{p_j}, \dots, (G_{j\varkappa_j}H_j)_{p_j}\}
\subseteq \calO_{\bbW,p_j}$
is $K$-linearly independent.
Set $J :=
\langle G_{jk}H_j+I_\bbW \mid 1\le k\le\varkappa_j\rangle_{R_\bbW}$.
Obviously, we have
$$
\dim_K J_{r_\bbW-1+i} \ge \dim_K\langle 
(G_{j1}H_j)_{p_j},\dots,(G_{j\varkappa_j}H_j)_{p_j}
\rangle_K = \varkappa_j
$$
for all $i\ge 0$.
Furthermore, using \cite[Lemma~2.8]{KLL} we write
$$
X_iG_{jl}+I_\bbX = {\textstyle\sum\limits_{k_j=1}^{\varkappa_j}}
c_{jk_jl}X_0G_{jk_j}+I_\bbX
$$
for some $c_{j1l},\dots,c_{j\varkappa_jl} \in K$,
where $0 \le i \le n$ and $1 \le l \le \varkappa_j$.
Then we get
$$
X_iG_{jl}H_j+I_\bbW ={\textstyle\sum\limits_{k_j=1}^{\varkappa_j}}
c_{jk_jl}X_0G_{jk_j}H_j + I_\bbW,
$$
and subsequently
$\dim_KJ_{r_\bbW-1+i} = \varkappa_j$ for all $i\ge0$.
Consequently, the homogeneous ideal $J$ defines a maximal
$p_j$-subscheme $\bbW'\subseteq\bbW$ such that
$\dim_K(I_{\bbW'/\bbW})_{r_\bbW-1}=\varkappa_j$.
Therefore Proposition~\ref{propSec3.2} implies that
$\bbW$ is not a Cayley-Bacharach scheme.
But $\bbW$ is arithmetically Gorenstein, and so it is a
Cayley-Bacharach scheme by \cite[Proposition~4.8]{KL},
and this is a contradiction.
\end{proof}

Let us apply Theorem~\ref{thmSec3.4} to a concrete case.

\begin{example} \label{examS3.6}
Let $K$ be a field with ${\rm char}(K) \ne 2, 3$, and
let $\bbW \subseteq \bbP^2_K$ be the $0$-dimensional
complete intersection defined by $I_\bbW=\langle F,G\rangle$,
where $F=X_1(X_1-2X_0)(X_1+2X_0)$ and
$G=(X_2-X_0)(X_1^2+X_2^2-4X_0^2)$.
Then $\bbW$ has degree $9$ and the support of~$\bbW$ is
$\Supp(\bbW) = \{p_1, \dots, p_7\}$ with
$p_1=(1:0:1)$, $p_2=(1:0:2)$, $p_3=(1:0:-2)$, $p_4=(1:2:1)$,
$p_5=(1:2:0)$, $p_6=(1:-2:1)$, and $p_7=(1:-2:0)$.
A homogeneous primary decomposition of~$I_\bbW$ is
$I_\bbW = I_1 \cap \cdots \cap I_7$,
where $I_i$ is the homogeneous prime ideal
corresponding to $p_i$ for $i \ne 5, 7$,
$I_5 = \langle X_1-2X_0, X_2^2 \rangle$,
and  $I_7 = \langle X_1+2X_0, X_2^2 \rangle$.
So, the scheme $\bbW$ is arithmetically Gorenstein,
but not reduced at $p_5$ and $p_7$.

Now we consider the $0$-dimensional subscheme $\bbX$
of~$\bbW$ defined by the ideal
$I_{\bbX} = I_1 \cap I_3 \cap I_4 \cap I_5 \subseteq P$.
Then $\deg(\bbX)=5$ and $\bbX$ is not reduced.
The residual scheme of~$\bbX$ in~$\bbW$ is denoted by~$\bbY$.
It is easy to see that $\bbX$ and
$\bbY$ are geometrically linked.
We have $r_{\bbW} = 4$ and $r_{\bbX} =
\alpha_{\bbX/\bbW} = r_{\bbY} = \alpha_{\bbY/\bbW} = 2$.
In this case there is a homogeneous polynomial
$H \in (I_{\bbY})_{2}$ such that
its image in $R_{\bbX}$ is a non-zerodivisor, for instance,
$H = X_{0}^{2} + X_{0}X_{1} + \tfrac{1}{4} X_{1}^{2}
- \tfrac{1}{2} X_{0}X_{2} - \tfrac{1}{4} X_{1}X_{2}$.
This polynomial satisfies the condition
(e) in~Theorem~\ref{thmSec3.4}. Therefore
$\bbX$ is a Cayley-Bacharach scheme.
\end{example}

The above example shows that, setting
$I_{\bbY,\bbX} := (I_\bbY+I_\bbX)/I_\bbX$, the condition
$\Ann_{R_\bbX}((I_{\bbY,\bbX})_{r_\bbW-d-1})=\langle0\rangle$
is a sufficient condition for $\bbX$ having CBP($d$)
in this case. In general case, this is also true.
Indeed, if
$\Ann_{R_\bbX}((I_{\bbY,\bbX})_{r_\bbW-d-1})=\langle0\rangle$
then for each maximal $p_j$-subscheme $\bbX'\subseteq\bbX$
with standard set of separators $\{F_{j1},...,F_{j\varkappa_j}\}$
there is a non-zero homogeneous element
$H_j\in (I_{\bbY,\bbX})_{r_\bbW-d-1}$
such that $(H_j)_{p_j}\in \calO_{\bbX,p_j}\setminus\fm_{\bbX,p_j}$,
and so $(H_j)_{p_j}\notin \fm_{\bbW,p_j}$ and
$(H_jF_{jk_j})_{p_j}\ne 0$ in $\calO_{\bbW,p_j}$.
This means that $H_jF_{jk_j} \notin I_\bbW$.
Subsequently, the condition (e) of Theorem~\ref{thmSec3.4}
is satisfied, and hence $\bbX$ has CBP($d$).

However, the above condition is not a necessary condition for
$\bbX$ having CBP($d$), as our next example shows.

\begin{example}\label{examS3.7}
Let $K$ be a field with ${\rm char}(K) \ne 2, 3$,
let $\bbW \subseteq \bbP^2_K$ be the $0$-dimensional
complete intersection given in Example~\ref{examS3.6},
and let $\bbX'$ be the set of points in~$\bbW$
with its homogeneous vanishing ideal
$I_{\bbX'} = I_1 \cap I_3 \cap I_4 \cap I'_5$,
where $I'_5 = \langle X_1-2X_0, X_2 \rangle$ is the
homogeneous prime ideal corresponding to $p_5$.
Then the residual scheme $\bbY'$ of~$\bbX'$ in~$\bbW$
has the homogeneous vanishing ideal
$I_{\bbY'} = I_2 \cap I'_5 \cap I_6 \cap I_7$.
It is clear that $r_{\bbX'} = \alpha_{\bbX'/\bbW}
= r_{\bbY'} = \alpha_{\bbY'/\bbW} = 2$ and
$$
I_{\bbY'} = \langle
X_{0}^{2} - \tfrac{1}{4} X_{1}^{2}
- \tfrac{1}{2} X_{0}X_{2} - \tfrac{1}{4} X_{1}X_{2},
X_0X_1X_2 + \tfrac{1}{2} X_1^2X_2,
X_0X_2^2 + \tfrac{1}{4}X_1X_2^2-\tfrac{1}{2}X_2^3
\rangle.
$$
In this case it is not difficult to verify that
the scheme $\bbX'$ is a complete intersection,
and hence it is a Cayley-Bacharach scheme.
However, there is no element $H$ in~$(I_{\bbY'})_2$
such that $H_{p_5} \ne 0$ in~$\calO_{\bbX',p_5}$.
Hence the condition
$\Ann_{R_{\bbX'}}((I_{\bbY',\bbX'})_{r_\bbW-r_{\bbX'}})
=\langle0\rangle$ is not satisfied,
even when $\bbX'$ is a Cayley-Bacharach scheme.
Moreover, we see that the element $F_5 = X_1^2 - 2X_1X_2$
is a minimal separator of~$\bbX'\setminus\{p_5\}$
in~$\bbX'$ and $(F_5H_5)_{p_5}$ is a socle element
of~$\calO_{\bbW,p_5}$, where
$H_5 = X_{0}^{2} - \tfrac{1}{4} X_{1}^{2}
- \tfrac{1}{2} X_{0}X_{2}-\tfrac{1}{4} X_{1}X_{2}
\in (I_{\bbY'})_2$.
\end{example}

It is interesting to examine the natural question
whether the condition that $\bbX$ has CBP($d$) is
equivalent to
$\Ann_{R_\bbX}((I_{\bbY,\bbX})_{r_\bbW-d-1})=\langle0\rangle$.
When the schemes $\bbW$, $\bbX$ and $\bbY$ are
finite sets of $K$-rational points in~$\bbP^n_K$
and $\bbW$ is a complete intersection,
this question has an affirmative answer
as was shown in~\cite[Theorem~4.6]{GKR}.
In our more general setting, this result can be
generalized as follows.

\begin{thm} \label{thmSec3.8}
\!\!\! Let $\bbX$\! and $\bbY$\! be geometrically linked by a
0-dimensional arithmetically Gorenstein scheme $\bbW$,
and let $I_{\bbY,\bbX}=(I_\bbY+I_\bbX)/I_\bbX$.
Then the scheme $\bbX$ has CBP($d$) if and only if we have
$\Ann_{R_\bbX}((I_{\bbY,\bbX})_{r_\bbW-d-1})=\langle0\rangle$.
\end{thm}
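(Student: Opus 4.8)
The plan is to derive Theorem~\ref{thmSec3.8} from the equivalence (a)$\Leftrightarrow$(e) of Theorem~\ref{thmSec3.4} by relating the condition $H_j \cdot \langle F_{j1}, \dots, F_{j\varkappa_j}\rangle_K \nsubseteq I_\bbW$ to the non-vanishing of an annihilator in $R_\bbX$, using crucially the extra hypothesis that $\bbX$ and $\bbY$ are \emph{geometrically} linked. Under that hypothesis, $\Supp(\bbX)$ and $\Supp(\bbY)$ are disjoint, so for $p_j \in \Supp(\bbX)$ we have $\calO_{\bbW,p_j} = \calO_{\bbX,p_j}$, and localization at $p_j$ gives a surjection $(I_\bbY)_i \twoheadrightarrow$ (a $K$-subspace of $\calO_{\bbX,p_j}$) whose image is exactly the degree-$i$ piece of $I_{\bbY,\bbX}$ read in the $p_j$-component. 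The heuristic argument already sketched in the excerpt (just before Example~\ref{examS3.7}) shows one direction: if $\Ann_{R_\bbX}((I_{\bbY,\bbX})_{r_\bbW-d-1}) = \langle 0\rangle$, then for each maximal $p_j$-subscheme $\bbX'$ one finds $H_j \in (I_{\bbY,\bbX})_{r_\bbW-d-1}$ with $(H_j)_{p_j} \notin \fm_{\bbX,p_j} = \fm_{\bbW,p_j}$, hence a unit at $p_j$, whence $(H_j F_{jk_j})_{p_j} \neq 0$ and condition~(e) holds, so $\bbX$ has CBP($d$) by Theorem~\ref{thmSec3.4}. I would reproduce this argument carefully, noting that a representative $H_j \in (I_\bbY)_{r_\bbW-d-1}$ of the class can be chosen, and that $H_j F_{jk_j} \in I_\bbW$ fails precisely because the $p_j$-component is nonzero.

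For the converse, suppose $\bbX$ has CBP($d$); I want to show $\Ann_{R_\bbX}((I_{\bbY,\bbX})_{r_\bbW-d-1}) = \langle 0\rangle$. The key observation is that $(I_{\bbY,\bbX})_{r_\bbW-d-1}$, viewed inside $R_\bbX$, has the property that localizing at each $p_j \in \Supp(\bbX)$ produces all of $\calO_{\bbX,p_j}$ up to multiplication — more precisely, I claim its localization at $p_j$ is not contained in $\fm_{\bbX,p_j}$, equivalently contains a unit of $\calO_{\bbX,p_j}$. Granting this claim, any homogeneous $f \in R_\bbX$ killing $(I_{\bbY,\bbX})_{r_\bbW-d-1}$ must have $f_{p_j} \cdot (\text{unit}) = 0$ in $\calO_{\bbX,p_j}$ for every $j$, forcing $f_{p_j} = 0$ for all $j$ and hence $f = 0$ by the injectivity statement of \cite[Lemma~1.1]{Kre} in degrees $< r_\bbX$ together with the isomorphism in degrees $\geq r_\bbX$. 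To prove the claim, I would argue by contradiction: if for some $p_j$ the localization of $(I_{\bbY,\bbX})_{r_\bbW-d-1}$ at $p_j$ lies in $\fm_{\bbX,p_j}$, pick a nonzero socle element $s_j \in \Ann_{\calO_{\bbX,p_j}}(\fm_{\bbX,p_j})$ and the corresponding maximal $p_j$-subscheme $\bbX'$ with standard separators $\{F_{j1},\dots,F_{j\varkappa_j}\}$ whose $p_j$-components generate $\langle s_j\rangle$; then $(H_j)_{p_j} \in \fm_{\bbX,p_j}$ kills $s_j$ (since $s_j \in \mathrm{socle}$), so $(H_j F_{jk_j})_{p_j} = 0$ for every $H_j \in (I_\bbY)_{r_\bbW-d-1}$ and every $k_j$, contradicting condition~(e) of Theorem~\ref{thmSec3.4}. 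This requires knowing that the $p_j$-components of a standard separator set all lie in $\langle s_j\rangle \subseteq \fm_{\bbX,p_j}$, which follows from the construction of separators recalled in Section~\ref{Section2} (each $f_{jk_j}$ has $p_j$-germ of the form $e_{jk_j}s_j$, up to powers of $x_0$).

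The main obstacle I anticipate is bookkeeping with the two different degree shifts and the passage between $I_\bbY$ as an ideal of $P$ (or $R_\bbW$) and $I_{\bbY,\bbX}$ as an ideal of $R_\bbX$, together with making sure that "localize at $p_j$" interacts correctly with taking degree-$(r_\bbW-d-1)$ pieces — in particular that a class in $(I_{\bbY,\bbX})_{r_\bbW-d-1}$ with nonzero (indeed unit) $p_j$-germ actually lifts to an element of $(I_\bbY)_{r_\bbW-d-1} \subseteq P_{r_\bbW-d-1}$, which is fine because $I_{\bbY,\bbX}$ is by definition the image of $I_\bbY$ in $R_\bbX$. A secondary subtlety is that condition~(e) involves the standard separators $F_{jk_j} \in P_{r_\bbX}$ rather than the minimal separators, but since $H_j F_{jk_j} = x_0^{r_\bbX - \deg(F^*_{jk_j})} H_j F^*_{jk_j}$ and $x_0$ is a non-zerodivisor on $R_\bbW$, non-membership in $I_\bbW$ transfers between the two, so one may freely work with whichever is convenient; I would use the minimal separators $F^*_{jk_j}$ when computing $p_j$-germs so that $(F^*_{jk_j})_{p_j}$ is literally $e_{jk_j} s_j$. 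Once these identifications are pinned down, both implications reduce to the socle/unit dichotomy in the local Gorenstein rings $\calO_{\bbX,p_j}$ and the statement follows from Theorem~\ref{thmSec3.4}.
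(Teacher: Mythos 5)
Your proposal is correct and follows essentially the same route as the paper: the direction $\Ann_{R_\bbX}((I_{\bbY,\bbX})_{r_\bbW-d-1})=\langle0\rangle\Rightarrow$ CBP($d$) is the argument preceding Example~\ref{examS3.7}, and the converse uses Theorem~\ref{thmSec3.4}(e) together with the fact that, under geometric linkage, $(F_{jk_j})_{p_j}$ is a socle element of $\calO_{\bbW,p_j}=\calO_{\bbX,p_j}$, so $H_jF_{jk_j}\notin I_\bbW$ forces $(H_j)_{p_j}$ to be a unit. The paper states this contrapositive directly where you argue by contradiction, but the content, including the final appeal to the germ map of \cite[Lemma~1.1]{Kre}, is identical.
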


\begin{proof}
According to the argument before Example~\ref{examS3.7}, it
suffices to show that
$\Ann_{R_\bbX}((I_{\bbY,\bbX})_{r_\bbW-d-1})=\langle0\rangle$
if $\bbX$ has CBP($d$).
To this end, let $\bbX' \subseteq \bbX$ be
a maximal $p_j$-subscheme with standard set of separators
$\{F_{j1}, \dots, F_{j\varkappa_j}\} \subseteq P_{r_{\bbX}}$.
Since $\bbX$ has CBP($d$), Theorem~\ref{thmSec3.4}
yields that there is an element $H_j \in (I_\bbY)_{r_\bbW-d-1}$
such that $H_j\cdot\langle F_{j1},...,F_{j\varkappa_j}
\rangle_K\nsubseteq I_\bbW$. W.l.o.g. we assume that
$H_jF_{j1} \notin I_\bbW$.
Since $\bbX$ and $\bbY$ are geometrically linked,
$(F_{j1})_{p_j}$ is a socle element
in~$\calO_{\bbW,p_j}=\calO_{\bbX,p_j}$.
Since $(H_jF_{j1})_p = 0$ in~$\calO_{\bbW,p}$
for every $p \in \Supp(\bbW)\setminus\{p_j\}$
and $(H_jF_{j1})_{p_j}\ne0$, we get
$(H_j)_{p_j} \notin \fm_{\bbX,p_j}$.
Consequently, for each point~$p_j$ of~$\Supp(\bbX)$,
we can find an element $H_j \in (I_\bbY)_{r_\bbW-d-1}$ such that
$(H_j)_{p_j}$ is a unit of $\calO_{\bbX,p_j}$.
By~\cite[Lemma~1.1]{Kre}, this condition
is exactly the right condition to have
$\Ann_{R_\bbX}((I_{\bbY,\bbX})_{r_\bbW-d-1})=\langle0\rangle$.
\end{proof}

\begin{remark}
Let $\bbX\subseteq \bbP^n_K$ be a $0$-dimensional scheme.
\begin{enumerate}
  \item[(a)]
  If $\bbX$ is reduced and has $K$-rational support,
  then there is a complete intersection consisting of distinct
  $K$-rational points $\bbW$ containing $\bbX$ such that
  $\bbX$ and its residue scheme by~$\bbW$ are geometrically linked
  (see, e.g., \cite[Remark~4.11]{GKR}).

  \item[(b)]
  If $\calO_{\bbX,p_j}$ is not a Gorenstein local ring
  for some point $p_j\in\Supp(\bbX)$, then
  there is no $0$-dimensional arithmetically
  Gorenstein scheme $\bbW \subseteq \bbP^n_K$ containing
  $\bbX$ such that $\bbX$ and its residual scheme in~$\bbW$
  are geometrically linked.
\end{enumerate}
\end{remark}

We end this section with the following immediate consequence
of Theorem~\ref{thmSec3.4}. This result allows us to
check whether $\bbX$ has CBP($d$)
by using a truncated Gr\"obner basis calculation
(cf.~\cite[Section~4.5]{KR2005}).
For the case of sets of distinct $K$-rational points
and $d=r_\bbX-1$ see also \cite[Corollary~4.10]{GKR}.

\begin{corollary} \label{corSec3.7}
In the setting of Theorem~\ref{thmSec3.4}, the scheme
$\bbX$ has CBP($d$) if and only if
$\HF_{P/(I_\bbW:(I_\bbW:I_\bbX)_{r_\bbW-d-1})}(d)
= \HF_\bbX(d)$.
\end{corollary}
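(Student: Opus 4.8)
The plan is to deduce Corollary~\ref{corSec3.7} from the equivalence (a)$\Leftrightarrow$(c) in Theorem~\ref{thmSec3.4} by translating the ideal-theoretic condition $I_\bbW:(I_\bbY)_{r_\bbW-d-1}=I_\bbX$ into the Hilbert-function equality stated in the corollary. The first step is to identify the degree-$(r_\bbW-d-1)$ piece of the residual ideal $I_\bbY$ in terms of $\bbW$ and $\bbX$ alone. Since $\bbY$ is the residual scheme of~$\bbX$ in~$\bbW$, we have $I_\bbY^{\rm sat}=(I_\bbW:I_\bbX)$, so modulo degree-$(r_\bbW-d-1)$ components one expects $(I_\bbY)_{r_\bbW-d-1}=(I_\bbW:I_\bbX)_{r_\bbW-d-1}$; I would justify this by noting that $I_\bbW$ is saturated and $I_\bbW\subseteq I_\bbY$, so the saturation only affects components below the initial degree, and in any case $(I_\bbW:I_\bbX)$ and $I_\bbY$ agree in all sufficiently high degrees (indeed in all degrees $\ge\alpha_{\bbY/\bbW}$, and $r_\bbW-d-1\ge r_\bbW-r_\bbX=\alpha_{\bbY/\bbW}$ by Proposition~\ref{propSec2.5}(b) together with $d\le r_\bbX-1$). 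Hence the ideal appearing in the corollary, $J:=I_\bbW:(I_\bbW:I_\bbX)_{r_\bbW-d-1}$, coincides with $I_\bbW:(I_\bbY)_{r_\bbW-d-1}$.

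The second step is the observation that $J$ always satisfies $I_\bbW\subseteq I_\bbX\subseteq J$: the first inclusion is clear since $I_\bbW$ is contained in any colon ideal of the form $I_\bbW:(\cdots)$, and the second because $I_\bbX\cdot(I_\bbW:I_\bbX)\subseteq I_\bbW$. Now Theorem~\ref{thmSec3.4}(a)$\Leftrightarrow$(c) says that $\bbX$ has CBP($d$) if and only if $J=I_\bbX$. Since $J\supseteq I_\bbX$ are both homogeneous ideals defining the same (or comparable) schemes, I would argue that the equality $J=I_\bbX$ is detected in a single degree, namely degree~$d$. For the forward direction this is immediate. For the converse, suppose $\HF_{P/J}(d)=\HF_\bbX(d)$, i.e. $J_d=(I_\bbX)_d$; one then wants to bootstrap this to $J=I_\bbX$ in all degrees. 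The cleanest route is to pass to the statement (c)$\Leftrightarrow$(d) of Theorem~\ref{thmSec3.4}: condition~(d) is precisely $(I_\bbW)_{r_\bbW-1}:(I_\bbY)_{r_\bbW-d-1}=(I_\bbX)_d$, which is exactly the degree-$d$ equality $J_d=(I_\bbX)_d$ after using Step~1 to replace $(I_\bbY)_{r_\bbW-d-1}$ by $(I_\bbW:I_\bbX)_{r_\bbW-d-1}$ and noting that intersecting with the component in degree $r_\bbW-1$ inside the colon is what produces degree $d=(r_\bbW-1)-(r_\bbW-d-1)$. So the Hilbert-function equality in the corollary is literally a restatement of Theorem~\ref{thmSec3.4}(d), and the corollary follows from the equivalence (a)$\Leftrightarrow$(d).

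The third step is bookkeeping with the Hilbert functions: I must check that $\HF_{P/J}(d)=\HF_\bbX(d)$ is genuinely equivalent to $J_d=(I_\bbX)_d$, which holds because $(I_\bbX)_d\subseteq J_d$ forces $\dim_K(P/J)_d\le\dim_K(P/I_\bbX)_d=\HF_\bbX(d)$ with equality iff the inclusion $(I_\bbX)_d\subseteq J_d$ is an equality of $K$-vector spaces. I also need $\dim_K P_d$ finite and the standard fact $\HF_{P/I_\bbX}=\HF_\bbX$, both immediate. The main obstacle, and the only point requiring genuine care, is Step~1: verifying that $(I_\bbY)_{r_\bbW-d-1}$ equals $(I_\bbW:I_\bbX)_{r_\bbW-d-1}$ in the relevant degree range rather than merely in high degrees. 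This hinges on the inequality $r_\bbW-d-1\ge\alpha_{\bbY/\bbW}$, i.e. $r_\bbX-d-1\ge 0$, which is guaranteed by the hypothesis $d\le r_\bbX-1$; combined with the fact that $I_\bbW:I_\bbX$ is the saturation of $I_\bbY$ and saturation does not change components in degrees $\ge\alpha_{\bbY/\bbW}$ (all generators of $(I_\bbW:I_\bbX)$ beyond its initial degree already lie in $I_\bbY$, since $\bbY$ and $\bbW$ share no component away from $\Supp(\bbY)$), the identification goes through. Once this is in place the corollary is a short formal deduction from Theorem~\ref{thmSec3.4}.
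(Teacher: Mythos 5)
Your argument is correct and is exactly the deduction the paper has in mind when it calls the corollary an ``immediate consequence'' of Theorem~\ref{thmSec3.4}: identify $(I_\bbY)_{r_\bbW-d-1}$ with $(I_\bbW:I_\bbX)_{r_\bbW-d-1}$, observe that condition~(d) of the theorem is precisely the degree-$d$ equality $\bigl(I_\bbW:(I_\bbY)_{r_\bbW-d-1}\bigr)_d=(I_\bbX)_d$, and convert this to the Hilbert-function statement via the containment $(I_\bbX)_d\subseteq\bigl(I_\bbW:(I_\bbY)_{r_\bbW-d-1}\bigr)_d$. The only remark worth making is that your Step~1 is simpler than you fear: by Definition~\ref{defnSec2.4} the ideal $I_{\bbY/\bbW}=\Ann_{R_\bbW}(I_{\bbX/\bbW})$ is already saturated, so $I_\bbY=I_\bbW:I_\bbX$ in \emph{every} degree and no degree-range discussion is needed.
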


\medskip \bigbreak
\section{Bound the Hilbert Function of the Dedekind Different}
\label{Section4}

In this section, we let $\bbX\subseteq \bbP^n_K$ be a
0-dimensional scheme and we let $0 \le d < r_{\bbX}$.
The aim of this section is to characterize the Cayley-Bacharach
property using the canonical module of $R_\bbX$, and apply
these results to bound the Hilbert function
and determine the regularity index of the Dedekind different
of~$\bbX$ under some additional hypotheses.

Recall that the graded $R_\bbX$-module
$\omega_{R_\bbX} = \underline{\Hom}_{K[x_0]}(R_\bbX,K[x_0])(-1)$
is called the {\it canonical module} of~$R_\bbX$.
Its $R_\bbX$-module structure is defined by
$(f\cdot\varphi)(g) = \varphi(fg)$ for all $f,g\in R_\bbX$
and $\varphi\in \omega_{R_\bbX}$.
It is also a finitely generated graded $R_\bbX$-module and
$$
\HF_{\omega_{R_\bbX}}(i) = \deg(\bbX) - \HF_{\bbX}(-i)
\quad \mbox{ for all } i\in\bbZ.
$$
The following two lemmas give us some more information
about this module.

\begin{lemma}\label{lemSec4.1}
For every homogeneous element
$\varphi \in (\omega_{R_\bbX})_{-d}$ its restriction
$\overline{\varphi}=\varphi|_{(R_\bbX)_{d+1}}:
(R_\bbX)_{d+1}\rightarrow K$
is a $K$-linear map such that
$\overline{\varphi}(x_0(R_\bbX)_{d})\!=\!\langle 0\rangle$.
Conversely, if $\overline{\varphi}: (R_\bbX)_{d+1} \rightarrow K$
is a $K$-linear map such that
$\overline{\varphi}(x_0(R_\bbX)_{d})=\langle 0\rangle$,
then there exists a homogeneous element
$\varphi \in (\omega_{R_\bbX})_{-d}$ such that
$\varphi|_{(R_\bbX)_{d+1}}=\overline{\varphi}$.
\end{lemma}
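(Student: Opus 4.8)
The statement is essentially an unwinding of the definition of the canonical module $\omega_{R_\bbX} = \underline{\Hom}_{K[x_0]}(R_\bbX, K[x_0])(-1)$ together with the grading shift. The plan is as follows. First I would fix a homogeneous element $\varphi \in (\omega_{R_\bbX})_{-d}$. Unwinding the shift, $\varphi$ corresponds to a homogeneous element of $\underline{\Hom}_{K[x_0]}(R_\bbX, K[x_0])$ of degree $-d-1$, i.e.\ a $K[x_0]$-linear map $\psi\colon R_\bbX \to K[x_0]$ that lowers degrees by $d+1$. In particular $\psi$ sends $(R_\bbX)_{d+1}$ into $(K[x_0])_0 = K$, and this restriction is exactly the map I will call $\overline\varphi$. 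The $K$-linearity of $\overline\varphi$ is immediate. For the vanishing condition: any element of $x_0(R_\bbX)_d$ has the form $x_0 g$ with $g \in (R_\bbX)_d$, and by $K[x_0]$-linearity $\psi(x_0 g) = x_0\,\psi(g)$; but $\psi(g) \in (K[x_0])_{-1} = \langle 0\rangle$ since $\psi$ lowers degree by $d+1$ and $g$ has degree $d$, so $\psi(x_0 g) = 0$. Hence $\overline\varphi(x_0(R_\bbX)_d) = \langle 0\rangle$, proving the first direction.

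For the converse, suppose $\overline\varphi\colon (R_\bbX)_{d+1}\to K$ is $K$-linear with $\overline\varphi(x_0(R_\bbX)_d) = \langle 0\rangle$. I need to produce a $K[x_0]$-linear map $\psi\colon R_\bbX\to K[x_0]$ homogeneous of degree $-d-1$ whose degree-$(d+1)$ component is $\overline\varphi$, and then set $\varphi$ to be the corresponding element of the shifted module. The recipe is to define $\psi$ degree by degree. Since $R_\bbX$ is a 1-dimensional Cohen--Macaulay ring with $x_0$ a non-zerodivisor, each graded piece $(R_\bbX)_i$ for $i \ge 0$ is obtained from lower pieces by multiplication by $x_0$ plus "new" elements, and for $i \ge r_\bbX$ multiplication by $x_0$ is a $K$-isomorphism $(R_\bbX)_i \xrightarrow{\sim} (R_\bbX)_{i+1}$. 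The idea is: in degrees $i \le d$ set $\psi$ to be zero (forced, since the target $(K[x_0])_{i-d-1}$ vanishes). In degree $d+1$ set $\psi = \overline\varphi$ with values in $(K[x_0])_0 = K$. In higher degrees $i > d+1$, an element of $(R_\bbX)_i$ need not be a multiple of $x_0$, so one must choose; the clean way is to use that $(R_\bbX)_i$ is spanned by $x_0^{i-r_\bbX+t}\cdot(\text{elements of degree} \le r_\bbX)$ for suitable $t$, or more simply to invoke that $\omega_{R_\bbX} = \underline{\Hom}_{K[x_0]}(R_\bbX, K[x_0])(-1)$ is reflexive/the functor $\underline{\Hom}_{K[x_0]}(-,K[x_0])$ is well-behaved so that a graded $K[x_0]$-linear map is determined by compatible data on each piece. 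In fact the cleanest justification is: $R_\bbX$ is a free $K[x_0]$-module (since $x_0$ is a homogeneous non-zerodivisor and $\dim R_\bbX = 1$, with a homogeneous $K[x_0]$-basis), and $\underline{\Hom}_{K[x_0]}(R_\bbX, K[x_0])$ is then also free with a dual basis; a graded $K[x_0]$-linear map of degree $-d-1$ is freely specified by its values on a homogeneous $K[x_0]$-basis, and one checks these values are pinned down precisely by $\overline\varphi$ modulo the relation $\overline\varphi(x_0(R_\bbX)_d)=0$.

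Concretely, I would pick a homogeneous $K[x_0]$-basis $b_1,\dots,b_m$ of $R_\bbX$ with $\deg b_\ell = \delta_\ell$; then $\psi$ is determined by the requirement $\psi(b_\ell) \in (K[x_0])_{\delta_\ell - d - 1}$, which is $\langle 0\rangle$ if $\delta_\ell \le d$ and is $K\cdot x_0^{\delta_\ell - d - 1}$ if $\delta_\ell \ge d+1$. One then shows: (i) the basis can be chosen so that all $\delta_\ell \le d+1$ — this is possible because $R_\bbX$ is generated in degrees $\le r_\bbX$ over $K[x_0]$ and in fact one can take a basis with $\delta_\ell \le r_\bbX$; but actually we need $\delta_\ell \le d+1$ which may fail when $d+1 < r_\bbX$. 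So instead I would allow $\delta_\ell$ up to $r_\bbX$ and note that for a basis element $b_\ell$ of degree $\delta_\ell \ge d+1$, its image in $(R_\bbX)_{d+1}$ under multiplication by $x_0^{d+1-\delta_\ell}$... no — this only makes sense the other way. The correct move: for $\delta_\ell \ge d+1$, $x_0^{\delta_\ell-d-1}b_\ell$ is a well-defined nonzero element of $(R_\bbX)_{\delta_\ell}$, but we want to read off $\psi(b_\ell)$ from $\overline\varphi$ applied in degree $d+1$. Since $x_0$ is a non-zerodivisor, $\psi(b_\ell)\cdot$(leading behaviour) is constrained by $\psi(x_0^{\delta_\ell-d-1}b_\ell) = x_0^{\delta_\ell-d-1}\psi(b_\ell)$, but $x_0^{\delta_\ell-d-1}b_\ell$ lies in degree $\delta_\ell$, not $d+1$. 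The resolution is to work in the other direction using that $(R_\bbX)_{d+1}$ may not contain $b_\ell$; so $\psi(b_\ell)$ for high-degree basis elements is genuinely free data, and any choice extends $\overline\varphi$. The vanishing hypothesis $\overline\varphi(x_0(R_\bbX)_d)=0$ is exactly what guarantees consistency in degree $d+1$ itself: writing $(R_\bbX)_{d+1} = x_0(R_\bbX)_d \oplus V$ for a complement $V$ spanned by the degree-$(d+1)$ basis elements, $\overline\varphi$ is forced to vanish on the first summand and is free on $V$, matching the module structure exactly. \textbf{The main obstacle} is organizing this degree-by-degree extension argument cleanly — making precise that the "free choices" of $\psi$ on basis elements of degree $> d+1$ do not interfere with the degree-$(d+1)$ data and that the resulting $\psi$ is genuinely $K[x_0]$-linear (not merely $K$-linear); the cleanest route is to cite the standard fact (e.g.\ \cite{KR2016} or \cite{Kre}) that $R_\bbX$ is free over $K[x_0]$ with a homogeneous basis and that $\underline{\Hom}_{K[x_0]}(-,K[x_0])$ then computes $(\omega_{R_\bbX})$ piece by piece, reducing the whole statement to the elementary linear-algebra identity $(\omega_{R_\bbX})_{-d} \cong \{\text{linear functionals on }(R_\bbX)_{d+1}\text{ killing }x_0(R_\bbX)_d\}$, which is just the definition after the grading shift.
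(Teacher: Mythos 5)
Your proof is correct and follows essentially the same route as the paper: the forward direction is the identical one-line computation $\varphi(x_0(R_\bbX)_d)=x_0\varphi((R_\bbX)_d)\subseteq x_0(K[x_0])_{-1}=\langle 0\rangle$, and for the converse both arguments extend $\overline{\varphi}$ by choosing a homogeneous basis adapted to multiplication by $x_0$, declaring the extension zero on the generators in degrees other than $d+1$, and using the hypothesis $\overline{\varphi}(x_0(R_\bbX)_d)=\langle 0\rangle$ precisely at the degree $d\to d+1$ step. The only difference is organizational: the paper builds the adapted $K$-bases explicitly in degrees $d+1,\dots,r_\bbX$ and verifies $K[x_0]$-linearity by hand, whereas you invoke freeness of $R_\bbX$ over $K[x_0]$ (a standard fact, since $x_0$ is a homogeneous non-zerodivisor) so that linearity is automatic and the hypothesis instead ensures agreement with $\overline{\varphi}$ on the summand $x_0(R_\bbX)_d$ of $(R_\bbX)_{d+1}$ --- a harmless repackaging of the same idea.
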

\begin{proof}
Clearly, for every homogeneous element
$\varphi \in (\omega_{R_\bbX})_{-d}$ its restriction
$\overline{\varphi}=\varphi|_{(R_\bbX)_{d+1}}$ is
a $K$-linear map. Also, we have
$$
\overline{\varphi}(x_0(R_\bbX)_{d}) =
\varphi(x_0(R_\bbX)_{d}) = x_0 \varphi((R_\bbX)_{d})
\subseteq x_0(K[x_0])_{-1}=\langle 0\rangle.
$$
Now let $\overline{\varphi}: (R_\bbX)_{d+1} \rightarrow K$
is a $K$-linear map such that
$\overline{\varphi}(x_0(R_\bbX)_{d})=\langle 0\rangle$.
Let $h_i = \HF_\bbX(i)-\HF_\bbX(i-1)$ for $i\in \bbN$.
Note that $(R_\bbX)_i=x_0^{i-r_\bbX}(R_\bbX)_{r_\bbX}$
and $h_i=0$ for all $i>r_\bbX$.
To define an element $\varphi \in (\omega_{R_\bbX})_{-d}$
with the desired properties, we start taking a $K$-basis
$g_1,...,g_{\scriptscriptstyle\sum\limits_{0\le k\le d+1}h_k}$
of $(R_\bbX)_{d+1}$.
For $i=d+2,...,r_\bbX$, we choose
$g_{\scriptscriptstyle\sum\limits_{0\le k<i}h_k+1},...,
g_{\scriptscriptstyle\sum\limits_{0\le k\le i}h_k}$
such that the set
$$
\Big\{\,
x_0^{i-d-1}g_1,...,
x_0^{i-d-1}g_{\scriptscriptstyle\sum\limits_{0\le k\le d+1}h_k},
..., g_{\scriptscriptstyle\sum\limits_{0\le k<i}h_k+1},...,
g_{\scriptscriptstyle\sum\limits_{0\le k\le i}h_k}
\,\Big\}
$$
forms a $K$-basis of $(R_\bbX)_i$. Then we get
$$
(R_\bbX)_i = \langle x_0^{i-d-1}g_1,...,
x_0^{i-d-1}g_{\scriptscriptstyle\sum\limits_{0\le k\le d+1}h_k},
..., x_0^{i-r_\bbX}g_{\scriptscriptstyle\sum\limits_{0\le k<r_\bbX}h_k+1},...,
x_0^{i-r_\bbX}g_{\scriptscriptstyle\sum\limits_{0\le k\le r_\bbX}h_k}
\rangle_K
$$
for all $i\ge r_\bbX$.
Let $\varphi: R_\bbX \rightarrow K[x_0]$ be the homogeneous
$K$-linear map of degree~$-d$ defined as:
for $f\in R_i$ with $i\le d$ we let $\varphi(f)=0$,
and for $f\in R_i$ with $i\ge d+1$ we write
$$
f = \sum_{1\le j \le \scriptscriptstyle
\sum\limits_{0\le k\le d+1}h_k}a_jx_0^{i-d-1}g_j + \cdots +
\sum_{\scriptscriptstyle
\sum\limits_{0\le k<r_\bbX}h_k+1 \le j \le
\scriptscriptstyle\sum\limits_{0\le k\le r_\bbX}
h_k }a_jx_0^{i-r_\bbX}g_j
$$
and let $\varphi(f)=
\sum_{1\le j \le \scriptscriptstyle
\sum\limits_{0\le k\le d+1}h_k}a_jx_0^{i-d-1}
\overline{\varphi}(g_j)$. The condition
$\overline{\varphi}(x_0(R_\bbX)_{d})=\langle 0\rangle$
implies that the map $\varphi$ is $K[x_0]$-linear.
Hence $\varphi\in (\omega_{R_\bbX})_{-d}$
is the desired element that we wanted to construct.
\end{proof}

\begin{lemma} \label{lemSec4.2}
The canonical module $\omega_{R_\bbX}$ satisfies
$\Ann_{R_\bbX}((\omega_{R_\bbX})_{-d})=\langle 0\rangle$
if and only if for every $p_j\in\Supp(\bbX)$ and
for every maximal $p_j$-subscheme
$\bbX' \subseteq \bbX$ there exists a homogeneous element
$\varphi \in (\omega_{R_\bbX})_{-d}$ such that
$I_{\bbX'/\bbX}\cdot \varphi \ne \langle 0\rangle$.
\end{lemma}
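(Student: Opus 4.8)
The plan is to recast the displayed right-hand condition in terms of the homogeneous ideal $\Ann:=\Ann_{R_\bbX}((\omega_{R_\bbX})_{-d})$ of $R_\bbX$, and then to show that whenever $\Ann\ne\langle0\rangle$ it already contains the ideal $I_{\bbX'/\bbX}$ of some maximal $p_j$-subscheme $\bbX'$. For a maximal $p_j$-subscheme $\bbX'\subseteq\bbX$ and an element $\varphi\in(\omega_{R_\bbX})_{-d}$, the submodule $I_{\bbX'/\bbX}\cdot\varphi$ of $\omega_{R_\bbX}$ is zero precisely when $f\varphi=0$ for every $f\in I_{\bbX'/\bbX}$; hence the existence of a $\varphi\in(\omega_{R_\bbX})_{-d}$ with $I_{\bbX'/\bbX}\cdot\varphi\ne\langle0\rangle$ is equivalent to $I_{\bbX'/\bbX}\nsubseteq\Ann$. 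With this reformulation the implication ``$\Ann=\langle0\rangle$ $\Rightarrow$ the separator condition'' is immediate, since $\bbX'\subsetneq\bbX$ forces $I_{\bbX'/\bbX}\ne\langle0\rangle$, so $I_{\bbX'/\bbX}\nsubseteq\langle0\rangle=\Ann$. The real content is therefore the converse: assuming $\Ann\ne\langle0\rangle$, produce a maximal $p_j$-subscheme $\bbX'$ with $I_{\bbX'/\bbX}\subseteq\Ann$.

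First I would record two properties of $\Ann$. It is homogeneous: if $f=\sum_e f_e\in\Ann$ with $f_e\in(R_\bbX)_e$, then the products $f_e\varphi$ lie in pairwise distinct graded pieces $(\omega_{R_\bbX})_{e-d}$, so $f\varphi=0$ forces $f_e\varphi=0$ for all $e$ and all $\varphi\in(\omega_{R_\bbX})_{-d}$. It is saturated: $R_\bbX$ is a graded Cohen--Macaulay algebra finite over the polynomial ring $K[x_0]$, hence a free $K[x_0]$-module, so $\omega_{R_\bbX}=\underline{\Hom}_{K[x_0]}(R_\bbX,K[x_0])(-1)$ is again free over $K[x_0]$ and $x_0$ is a non-zerodivisor on it; therefore $x_0^N f\in\Ann$ gives $x_0^N(f\varphi)=0$, hence $f\varphi=0$, for all $\varphi\in(\omega_{R_\bbX})_{-d}$, i.e.\ $f\in\Ann$.

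Now assume $\Ann\ne\langle0\rangle$. As a proper saturated homogeneous ideal of $R_\bbX$, it equals the ideal $I_{\bbX''/\bbX}$ of a subscheme $\bbX''\subsetneq\bbX$. Choose $p_j\in\Supp(\bbX)$ with $\calO_{\bbX'',p_j}\subsetneq\calO_{\bbX,p_j}$ and let $\fq_j$ be the nonzero kernel of the surjection $\calO_{\bbX,p_j}\twoheadrightarrow\calO_{\bbX'',p_j}$. Since $\calO_{\bbX,p_j}$ is a $0$-dimensional local ring, $\fq_j$ contains a nonzero socle element $s_j\in\Ann_{\calO_{\bbX,p_j}}(\fm_{\bbX,p_j})$, and by \cite[Proposition~3.2]{KL} the ideal $\langle s_j\rangle$ determines a maximal $p_j$-subscheme $\bbX'\subseteq\bbX$ with $\calO_{\bbX',p_j}=\calO_{\bbX,p_j}/\langle s_j\rangle$. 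Because $\langle s_j\rangle\subseteq\fq_j$ --- and $\calO_{\bbX',p_k}=\calO_{\bbX,p_k}$ surjects onto $\calO_{\bbX'',p_k}$ for $k\ne j$ --- we obtain $\bbX''\subseteq\bbX'$, hence $I_{\bbX'/\bbX}\subseteq I_{\bbX''/\bbX}=\Ann$. By the first paragraph this is the same as $I_{\bbX'/\bbX}\cdot\varphi=\langle0\rangle$ for every $\varphi\in(\omega_{R_\bbX})_{-d}$, so the right-hand condition fails for $\bbX'$, completing the contrapositive.

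The step that carries the weight is the saturatedness of $\Ann$ (equivalently, that $x_0$ is a non-zerodivisor on $\omega_{R_\bbX}$): this is exactly what lets us identify $\Ann$ with the ideal of an honest subscheme $\bbX''$ and then appeal to the elementary fact that any proper subscheme of $\bbX$ is contained in a maximal $p_j$-subscheme; everything else is formal, once the identity $I_{\bbX'/\bbX}\cdot\varphi=\langle0\rangle\Leftrightarrow I_{\bbX'/\bbX}\subseteq\Ann$ is in place. If one prefers to bypass the subscheme language, the same $\bbX'$ may instead be produced by taking a nonzero homogeneous $r\in\Ann$, multiplying by a $g\in(R_\bbX)_{r_\bbX}$ chosen via the isomorphism $\tilde{\imath}|_{(R_\bbX)_{r_\bbX}}$ so that the germ of $gr$ at a suitable $p_j$ is a nonzero socle element $s_j$ while its germs elsewhere vanish (using \cite[Lemma~4.5.9(a)]{KR2016} and \cite[Proposition~3.2]{KL} as in the proof of Theorem~\ref{thmSec3.4}), and then checking that $\langle gr\rangle$ and $I_{\bbX'/\bbX}$ coincide in all sufficiently large degrees, whence $I_{\bbX'/\bbX}\subseteq\Ann$ by the saturatedness of $\Ann$.
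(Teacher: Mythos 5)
Your proof is correct. The easy direction and the reformulation ``there exists a homogeneous $\varphi\in(\omega_{R_\bbX})_{-d}$ with $I_{\bbX'/\bbX}\cdot\varphi\ne\langle0\rangle$ iff $I_{\bbX'/\bbX}\nsubseteq\Ann$'' are exactly right, and the two structural facts you lean on (homogeneity of $\Ann$, and $x_0$-saturatedness via freeness of $\omega_{R_\bbX}$ over $K[x_0]$) both hold and do imply that $\Ann$ is the vanishing ideal in $R_\bbX$ of a proper closed subscheme $\bbX''$; the passage from $\bbX''$ to a maximal $p_j$-subscheme containing it via a nonzero socle element of the kernel $\fq_j$ is also sound. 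Your route differs from the paper's in how the hard direction is executed: the paper never identifies $\Ann$ with the ideal of a subscheme. Instead it takes a single nonzero homogeneous $f\in\Ann$, picks $a\in\calO_{\bbX,p_j}$ so that $s_j=af_{p_j}$ is a socle element, forms $g=\tilde{\imath}^{-1}((0,\dots,s_jT_j^{r_\bbX},\dots,0))$ and $h=\tilde{\imath}^{-1}((0,\dots,aT_j^{r_\bbX},\dots,0))$ with $x_0^ig=fh$, and derives the contradiction $0=(f\cdot\varphi)(h\widetilde{g})=x_0^i(g\cdot\varphi)(\widetilde{g})\ne0$ by direct evaluation --- essentially the element-level argument you sketch in your closing ``alternative'' paragraph. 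What your main argument buys is conceptual transparency (the annihilator is literally the ideal of the largest subscheme all of whose separator ideals kill $(\omega_{R_\bbX})_{-d}$, and the lemma reduces to the fact that every proper subscheme sits inside a maximal $p_j$-subscheme); what the paper's computation buys is independence from the correspondence between saturated ideals of $R_\bbX$ and closed subschemes, at the cost of a less illuminating manipulation. Both ultimately rest on the same two ingredients: the socle-element description of maximal $p_j$-subschemes from \cite[Proposition~3.2]{KL} and the fact that $x_0$ is a non-zerodivisor on $\omega_{R_\bbX}$.
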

\begin{proof}
We need only to prove that if for every $p_j\in\Supp(\bbX)$
and for every maximal $p_j$-subscheme
$\bbX' \subseteq \bbX$ there exists a homogeneous element
$\varphi \in (\omega_{R_\bbX})_{-d}$ such that
$I_{\bbX'/\bbX}\cdot \varphi \ne \langle 0\rangle$ then
$\Ann_{R_\bbX}((\omega_{R_\bbX})_{-d})=\langle 0\rangle$.
Suppose for a contradiction that
$f\cdot(\omega_{R_\bbX})_{-d} = \langle 0\rangle$
for some $f\in (R_\bbX)_i\setminus\{0\}$ with $i\ge 0$.
Since $f\ne 0$, we may assume the germ $f_{p_j}\ne 0$
for some $j\in\{1,...,s\}$. In the local ring
$\calO_{\bbX,p_j}$ we find an element $a \in \calO_{\bbX,p_j}$
such that $s_j=af_{p_j}$ is a socle element of $\calO_{\bbX,p_j}$
(cf.~\cite[Lemma~4.5.9(a)]{KR2016}).
Now let
$g=\tilde{\imath}^{-1}((0,...,0,s_jT_j^{r_\bbX},0,...,0))$ and
$h=\tilde{\imath}^{-1}((0,...,0,aT_j^{r_\bbX},0,...,0))$.
Then $g,h\in (R_\bbX)_{r_\bbX}$ satisfies $x_0^ig=fh$.
Also, the ideal $\langle g\rangle$ defines a maximal
$p_j$-subscheme $\bbX'$ of~$\bbX$, that is, we have
$I_{\bbX'/\bbX} = \langle g\rangle^{\rm sat}$.
Thus there is $\varphi \in (\omega_{R_\bbX})_{-d}$ such that
$\langle g\rangle^{\rm sat}\cdot \varphi \ne \langle 0\rangle$,
in particularly, $g\cdot\varphi\ne 0$. It follow that
$g\cdot\varphi(\widetilde{g}) \ne 0$ for some non-zero
homogeneous element $\widetilde{g}\in R_\bbX$.
Hence we get $0 = (f\cdot\varphi)(h\widetilde{g})
= (fh\cdot\varphi)(\widetilde{g})
= (x_0^ig\cdot\varphi)(\widetilde{g})
= (g\cdot\varphi)(x_0^i\widetilde{g})
= x_0^i (g\cdot\varphi)(\widetilde{g}) \ne 0$,
a contradiction.
\end{proof}

Using the above properties we prove the following
characterization of the Cayley-Bacharach property
in terms of the canonical module.

\begin{proposition}\label{propSec4.3}
Let $\bbX\subseteq \bbP^n_K$ be a 0-dimensional scheme,
and let $0 \le d < r_{\bbX}$.
Then the following conditions are equivalent.
\begin{enumerate}
  \item[(a)] The scheme $\bbX$ has CBP($d$).
  \item[(b)] We have
  $\Ann_{R_\bbX}((\omega_{R_\bbX})_{-d})=\langle0\rangle$.
\end{enumerate}
\end{proposition}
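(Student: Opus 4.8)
The plan is to prove the two implications separately, relying heavily on the characterization of CBP($d$) via Hilbert functions of ideals of maximal $p_j$-subschemes (Proposition~\ref{propSec3.2}(d)) and on the two lemmas just established about $\omega_{R_\bbX}$.

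\textbf{Direction (a)$\Rightarrow$(b).} Suppose $\bbX$ has CBP($d$). By Lemma~\ref{lemSec4.2}, it suffices to show that for every $p_j\in\Supp(\bbX)$ and every maximal $p_j$-subscheme $\bbX'\subseteq\bbX$ there is a homogeneous $\varphi\in(\omega_{R_\bbX})_{-d}$ with $I_{\bbX'/\bbX}\cdot\varphi\ne\langle0\rangle$. Fix such a $\bbX'$ with a standard set of separators $\{f_{j1},\dots,f_{j\varkappa_j}\}\subseteq(R_\bbX)_{r_\bbX}$. First I would exhibit, inside $(R_\bbX)_{r_\bbX}$, the ``deepest'' separator not divisible by $x_0^{r_\bbX-d}$, i.e.\ (after reindexing) some $f_{jk}$ with $x_0^{r_\bbX-d}\nmid f_{jk}$, which exists by Proposition~\ref{propSec3.2}(b). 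Writing $f_{jk}=x_0^{r_\bbX-\deg(f^*_{jk})}f^*_{jk}$ with $\deg(f^*_{jk})\le d$, the element $g:=x_0^{d-\deg(f^*_{jk})}f^*_{jk}\in(R_\bbX)_d$ lies in $(I_{\bbX'/\bbX})_d$ and is nonzero, with $x_0 g\ne 0$ in $(R_\bbX)_{d+1}$ (indeed $x_0^{r_\bbX-d}g = f_{jk}\ne 0$). Since $x_0 g\notin x_0(R_\bbX)_d\cap$ (something), I want a $K$-linear functional $\overline\varphi:(R_\bbX)_{d+1}\to K$ vanishing on $x_0(R_\bbX)_d$ but not on the class of $g$ times a suitable linear form; more precisely, since $g\in(R_\bbX)_d$ and I need $\varphi\in(\omega_{R_\bbX})_{-d}$ with $(g\cdot\varphi)\ne0$, I choose $\overline\varphi$ on $(R_\bbX)_{d+1}$ with $\overline\varphi(x_0(R_\bbX)_d)=0$ and $\overline\varphi(x_iG)\ne0$ for some variable $X_i$ and representative $G$ of $g$ not killed after multiplication — using that $g\notin\langle x_0\rangle$-multiples forces some $x_i g\ne 0$ modulo the relations, so $x_i g$ is not in the kernel of a generic functional on $(R_\bbX)_{d+1}/x_0(R_\bbX)_d$. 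Lemma~\ref{lemSec4.1} then lifts $\overline\varphi$ to $\varphi\in(\omega_{R_\bbX})_{-d}$, and $(g\cdot\varphi)(x_i\cdot(\text{unit}))\ne0$ shows $I_{\bbX'/\bbX}\cdot\varphi\ne\langle0\rangle$.

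\textbf{Direction (b)$\Rightarrow$(a).} Suppose $\Ann_{R_\bbX}((\omega_{R_\bbX})_{-d})=\langle0\rangle$, and assume for contradiction that $\bbX$ does not have CBP($d$). By Proposition~\ref{propSec3.2}(d) there is $p_j\in\Supp(\bbX)$ and a maximal $p_j$-subscheme $\bbX'\subseteq\bbX$ with $\dim_K(I_{\bbX'/\bbX})_d=\varkappa_j$. I claim $(I_{\bbX'/\bbX})_d\cdot\varphi=\langle0\rangle$ for every $\varphi\in(\omega_{R_\bbX})_{-d}$, which contradicts Lemma~\ref{lemSec4.2}. To see this, note that $\dim_K(I_{\bbX'/\bbX})_d=\varkappa_j$ together with Remark~\ref{remSec2.3}(c) shows $(I_{\bbX'/\bbX})_{d}$ already accounts for the full ``separator space,'' i.e.\ the minimal separators all have degree $\le d$, so each standard separator $f_{jk}=x_0^{r_\bbX-d}\cdot(\text{element of }(I_{\bbX'/\bbX})_d)$, giving $(I_{\bbX'/\bbX})_{d}\subseteq$ the socle-supported part at $p_j$. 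Then for $\varphi\in(\omega_{R_\bbX})_{-d}$ and $g\in(I_{\bbX'/\bbX})_d$, the product $g\cdot\varphi\in(\omega_{R_\bbX})_0$ is a functional on $(R_\bbX)_{d}$; I would show it is zero by observing that $\varphi$, being degree $-d$, only ``sees'' $(R_\bbX)_{\ge d+1}$ via Lemma~\ref{lemSec4.1}, and $g\cdot\varphi$ evaluated on any $\widetilde g\in(R_\bbX)_e$ equals $\varphi(g\widetilde g)$; since $g$ is a sum of top separators divisible by $x_0^{r_\bbX-d}$ only \emph{after} scaling — more carefully, since every element of $(R_\bbX)_{d+1}$ that meets $g(R_\bbX)_1$ lies in $x_0(R_\bbX)_d$ (because $\dim_K(I_{\bbX'/\bbX})_{d+1}$ still equals $\varkappa_j$ forces $(I_{\bbX'/\bbX})_{d+1}=x_0(I_{\bbX'/\bbX})_d$ and the multiplication $X_i$ maps $g$ into $x_0(R_\bbX)_d$ by \cite[Lemma~2.8]{KLL}), we get $\varphi(g\widetilde g)\in\varphi(x_0(R_\bbX)_d\cdot(R_\bbX)_{e-1})=x_0\varphi((R_\bbX)_{d}(R_\bbX)_{e-1})$, which lands in degree $<0$ unless $e$ is large, and a downward induction on this kills it. Hence $(I_{\bbX'/\bbX})_d\cdot\varphi=\langle0\rangle$, the desired contradiction.

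\textbf{Main obstacle.} The delicate point is the argument in (b)$\Rightarrow$(a) that $\dim_K(I_{\bbX'/\bbX})_d=\varkappa_j$ forces $(I_{\bbX'/\bbX})_d$ to be annihilated by \emph{every} $\varphi\in(\omega_{R_\bbX})_{-d}$: one must translate the combinatorial condition ``all minimal separators have degree $\le d$'' into the statement that $g\cdot(R_\bbX)_1\subseteq x_0(R_\bbX)_d$ for $g\in(I_{\bbX'/\bbX})_d$, using \cite[Lemma~2.8]{KLL} to handle multiplication by the $X_i$, and then chase this through the definition of $\omega_{R_\bbX}$ and Lemma~\ref{lemSec4.1}. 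The (a)$\Rightarrow$(b) direction is comparatively routine once the right nonzero element $g\in(I_{\bbX'/\bbX})_d$ with $x_0^{r_\bbX-d}g\ne0$ is isolated and Lemma~\ref{lemSec4.1} is invoked to build $\varphi$.
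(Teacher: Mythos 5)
Your direction (b)$\Rightarrow$(a) is essentially sound and is the same argument as the paper's: once every minimal separator of $\bbX'$ has degree $\le d$, one gets $(I_{\bbX'/\bbX})_{d+e}=x_0^e(I_{\bbX'/\bbX})_d$ for all $e\ge 0$ by the dimension count, so for $g\in(I_{\bbX'/\bbX})_d$ and $\widetilde g\in(R_\bbX)_e$ one has $\varphi(g\widetilde g)\in x_0^e\varphi((R_\bbX)_d)\subseteq x_0^e(K[x_0])_{-1}=\langle0\rangle$; no ``downward induction'' is needed, since a homogeneous $\varphi\in(\omega_{R_\bbX})_{-d}$ kills $(R_\bbX)_i$ for every $i\le d$ outright. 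Any nonzero $g\in(I_{\bbX'/\bbX})_d$ is then a nonzero element of $\Ann_{R_\bbX}((\omega_{R_\bbX})_{-d})$, which already contradicts (b) directly.

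The direction (a)$\Rightarrow$(b) has a genuine error at its starting point. If $x_0^{r_\bbX-d}\nmid f_{jk}$ and $f_{jk}=x_0^{\,r_\bbX-\deg(f^*_{jk})}f^*_{jk}$, then necessarily $\deg(f^*_{jk})\ge d+1$, not $\deg(f^*_{jk})\le d$ as you write; consequently your element $g=x_0^{\,d-\deg(f^*_{jk})}f^*_{jk}$ involves a negative power of $x_0$ and does not exist. This is not a typo one can fix in place: when $\bbX$ has CBP($d$) the space $(I_{\bbX'/\bbX})_d$ may very well be $\langle0\rangle$ (all minimal separators can have degree $r_\bbX$), so there is in general no nonzero degree-$d$ element of $I_{\bbX'/\bbX}$ to feed into your construction. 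Moreover, even when such a $g$ exists, your plan to detect $x_ig$ by a functional on $(R_\bbX)_{d+1}$ vanishing on $x_0(R_\bbX)_d$ is never justified ($x_0g$ itself obviously lies in $x_0(R_\bbX)_d$, and you give no reason why some $x_ig$ should not), and it cannot reach the separators of degree $\rho>d+1$ that CBP($d$) actually produces.

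The repair is to work in degree $\rho:=\deg(f^*_{jk})\ge d+1$ rather than in degree $d$: by minimality of $\mu(e_{jk}s_j)=\rho$ one has $f^*_{jk}\notin x_0(R_\bbX)_{\rho-1}$, so there is a $K$-linear $\overline{\varphi}:(R_\bbX)_\rho\to K$ with $\overline{\varphi}(x_0(R_\bbX)_{\rho-1})=\langle0\rangle$ and $\overline{\varphi}(f^*_{jk})\ne0$; Lemma~\ref{lemSec4.1} (applied with $\rho-1$ in place of $d$) lifts it to $\varphi\in(\omega_{R_\bbX})_{-\rho+1}$, and then $x_0^{\rho-d-1}\cdot\varphi\in(\omega_{R_\bbX})_{-d}$ still satisfies $(x_0^{\rho-d-1}\cdot\varphi)(f^*_{jk})=x_0^{\rho-d-1}\varphi(f^*_{jk})\ne0$ because $x_0$ acts injectively on $K[x_0]$. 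This gives $I_{\bbX'/\bbX}\cdot(x_0^{\rho-d-1}\cdot\varphi)\ne\langle0\rangle$, and Lemma~\ref{lemSec4.2} finishes the implication. This is exactly the paper's route, and your proposal needs to be rewritten along these lines for the first implication.
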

\begin{proof}
Suppose that $\bbX$ has CBP($d$). Let $\bbX'\subseteq \bbX$
be a maximal $p_j$-subscheme with set of minimal separators
$\{f^*_{j1},...,f^*_{j\varkappa_j}\}$.
By Proposition~\ref{propSec3.2}, there exists an index
$k\in\{1,...,\varkappa_j\}$ such that $\rho=\deg(f^*_{jk})\ge d+1$
and $f^*_{jk}\notin x_0(R_\bbX)_{\rho-1}$.
W.l.o.g. assume that $k=1$. So, we can define a $K$-linear map
$\overline{\varphi}_j: (R_\bbX)_\rho\rightarrow K$ such that
$\overline{\varphi}_j(x_0(R_\bbX)_{\rho-1})=\langle 0\rangle$
and $\overline{\varphi}_j(f^*_{j1})\ne 0$.
Using Lemma~\ref{lemSec4.1} we lift this map to obtain
a homogeneous element $\varphi_j \in (\omega_{R_\bbX})_{-\rho+1}$
such that $\varphi_j(f^*_{j1})\ne 0$. Since $x_0$ is a
non-zerodivisor of~$R_\bbX$, it follows that
$x_0^{\rho-d-1}\varphi_j(f^*_{j1})\ne 0$. Especially, we have
$x_0^{\rho-d-1}\cdot\varphi_j \in (\omega_{R_\bbX})_{-d}$ and
$I_{\bbX'/\bbX}\cdot (x_0^{\rho-d-1}\cdot\varphi_j)
\ne \langle 0\rangle$. Hence Lemma~\ref{lemSec4.2} yields
the condition
$\Ann_{R_\bbX}((\omega_{R_\bbX})_{-d})=\langle0\rangle$.

Conversely, assume for a contradiction that
$\bbX$ does not have CBP($d$).
There is a maximal $p_j$-subscheme
$\bbX'\subseteq \bbX$ such that its set of minimal separators
$\{f^*_{j1},...,f^*_{j\varkappa_j}\}$ satisfies
$\deg(f^*_{jk})\le d$ for all $k=1,...,\varkappa_j$.
By Remark~\ref{remSec2.3}(a)-(c), we may assume that,
for $i\ge 0$, the set
$\{x_0^{i-\deg(f^*_{jk_j})}f^*_{jk_j} \,\mid\, \deg(f^*_{jk_j})\le i \}$
is a $K$-basis of~$(I_{\bbX'/\bbX})_i$.
In this case for every $\varphi \in (\omega_{R_\bbX})_{-d}$
we have $\varphi(f^*_{jk})=0$ for all $k=1,...,\varkappa_j$.
We shall show that $f^*_{jk}\cdot \varphi =0$ for all
$k=1,...,\varkappa_j$.
Let $i\ge 0$ and let $h\in R_i\setminus\{0\}$
be a homogeneous element.
If $hf^*_{jk}=0$ then
$(f^*_{jk}\cdot\varphi)(h)=\varphi(hf^*_{jk})=0$.
Suppose that $hf^*_{jk}\ne 0$. Since $hf^*_{jk}\in I_{\bbX'/\bbX}$,
this allows us to write
$hf^*_{jk} = \sum_{l=1}^{\varkappa_j}c_{jl}
x_0^{i+\deg(f^*_{jk})-\deg(f^*_{jl} )}f^*_{jl}$
for some $c_{j1},...,c_{j\varkappa_j}\in K$.
This implies
$(f^*_{jk}\cdot\varphi)(h)
= \varphi(hf^*_{jk})
=\sum_{l=1}^{\varkappa_j}c_{jl}
x_0^{i+\deg(f^*_{jk})-\deg(f^*_{jl} )}\varphi(f^*_{jl})=0$.
Hence we have shown $f^*_{jk}\cdot \varphi =0$ for
all $k=1,...,\varkappa_j$. In addition, we have
$I_{\bbX'/\bbX} = \langle f^*_{j1},...,f^*_{j\varkappa_j}\rangle$.
It follows that $I_{\bbX'/\bbX}\cdot \varphi = \langle 0\rangle$
for any homogeneous element $\varphi \in (\omega_{R_\bbX})_{-d}$.
Therefore we get
$\Ann_{R_\bbX}((\omega_{R_\bbX})_{-d})\ne \langle0\rangle$,
a contradiction.
\end{proof}

As a consequence of the proposition, we get the following property.
Here we recall that a 0-dimensional scheme $\bbX$
is called {\it locally Gorenstein} if the local ring
$\calO_{\bbX,p_j}$ is a Gorenstein ring for every point
$p_j\in\Supp(\bbX)$.

\begin{corollary}\label{corSec4.4}
Let $K$ be an infinite field,
let $\bbX\subseteq \bbP^n_K$ be a 0-dimensional locally
Gorenstein scheme, and let $0 \le d < r_{\bbX}$.
Then $\bbX$ has CBP($d$) if and only if there exists
an element $\varphi\in(\omega_{R_\bbX})_{-d}$ such that
$\Ann_{R_\bbX}(\varphi) = \langle0\rangle$.
\end{corollary}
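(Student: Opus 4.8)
The plan is to derive this corollary from Proposition~\ref{propSec4.3}, which says $\bbX$ has CBP($d$) if and only if $\Ann_{R_\bbX}((\omega_{R_\bbX})_{-d})=\langle 0\rangle$. For the ``if'' direction, suppose there is an element $\varphi\in(\omega_{R_\bbX})_{-d}$ with $\Ann_{R_\bbX}(\varphi)=\langle 0\rangle$. Then trivially $\Ann_{R_\bbX}((\omega_{R_\bbX})_{-d})\subseteq\Ann_{R_\bbX}(\varphi)=\langle 0\rangle$, so $\bbX$ has CBP($d$) by Proposition~\ref{propSec4.3}. This direction needs no further hypotheses; the infiniteness of $K$ and the locally Gorenstein condition are only used for the converse.

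For the ``only if'' direction, assume $\bbX$ has CBP($d$), so $\Ann_{R_\bbX}((\omega_{R_\bbX})_{-d})=\langle 0\rangle$ by Proposition~\ref{propSec4.3}. First I would decompose the annihilator condition point by point. For each $p_j\in\Supp(\bbX)$, the fact that no nonzero element kills all of $(\omega_{R_\bbX})_{-d}$ means, after passing to germs at $p_j$ as in the proof of Lemma~\ref{lemSec4.2}, that the set $\{\varphi\in(\omega_{R_\bbX})_{-d}\mid s_j\cdot\varphi = 0\}$ is a proper $K$-subspace of $(\omega_{R_\bbX})_{-d}$ for every nonzero socle element $s_j$ of $\calO_{\bbX,p_j}$; when $\calO_{\bbX,p_j}$ is Gorenstein its socle is one-dimensional, so there is (up to scalar) a single such socle element $s_j$, and hence a single proper subspace $V_j\subsetneq(\omega_{R_\bbX})_{-d}$ to avoid for each $j$. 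A homogeneous $\varphi$ lies outside $V_j$ precisely when $\varphi$ does not annihilate the (one-dimensional) socle at $p_j$, equivalently when the germ $\varphi_{p_j}$ is a ``generator'' of the injective hull; this is exactly the condition forcing $\Ann_{R_\bbX}(\varphi)$ to contain no element supported at $p_j$. I would then invoke prime avoidance over the infinite field $K$: since $(\omega_{R_\bbX})_{-d}$ is a finite-dimensional $K$-vector space that is not contained in any of the finitely many proper subspaces $V_1,\dots,V_s$, and $K$ is infinite, a generic element $\varphi\in(\omega_{R_\bbX})_{-d}$ lies outside all of them simultaneously. Such a $\varphi$ has $\Ann_{R_\bbX}(\varphi)=\langle 0\rangle$: any nonzero $f\in R_\bbX$ has nonzero germ at some $p_j$, one can multiply by a suitable element to land on the socle $\langle s_j\rangle$, and then the argument of Lemma~\ref{lemSec4.2} (using that $x_0$ is a non-zerodivisor) shows $f\cdot\varphi\ne 0$.

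The step I expect to require the most care is the reduction from ``$\Ann_{R_\bbX}((\omega_{R_\bbX})_{-d})=\langle 0\rangle$'' to ``finitely many proper subspaces to avoid,'' i.e.\ making precise that in the locally Gorenstein case the obstruction at each point is a single hyperplane-type condition rather than a more complicated closed set, and checking that avoiding these subspaces genuinely yields the zero annihilator globally (not just locally at each point). Here the Gorenstein hypothesis is essential: it guarantees the socle of each $\calO_{\bbX,p_j}$ is one-dimensional, so that $\varphi$ not annihilating the socle forces $\varphi_{p_j}$ to generate $\calO_{\bbX,p_j}$ as a module over its own injective hull, and this is what propagates to $\Ann_{R_\bbX}(\varphi)=\langle 0\rangle$ via the socle-climbing argument of Lemma~\ref{lemSec4.2}. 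The infiniteness of $K$ enters only to run the vector-space prime avoidance; over a finite field a single good $\varphi$ need not exist even though the union of the $V_j$ is still proper as a set is not guaranteed.
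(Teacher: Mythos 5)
Your proposal is correct and follows essentially the same route as the paper: both directions reduce to Proposition~\ref{propSec4.3} via Lemma~\ref{lemSec4.2}, the locally Gorenstein hypothesis is used to get a single one-dimensional socle (hence a single proper subspace of $(\omega_{R_\bbX})_{-d}$ to avoid) at each point of the support, and the infiniteness of $K$ is used to choose one $\varphi$ outside all finitely many of these proper subspaces at once. The paper phrases the pointwise condition in terms of the unique maximal $p_j$-subscheme $\bbX'_j$ and $I_{\bbX'_j/\bbX}\cdot\varphi\ne\langle 0\rangle$ rather than the socle element directly, but this is the same condition under the correspondence between socle elements and maximal $p_j$-subschemes, so the two arguments coincide.
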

\begin{proof}
Since $\bbX$ is locally Gorenstein, there is for each point
$p_j\in\Supp(\bbX)$ a uniquely maximal $p_j$-subscheme $\bbX'_j$
of~$\bbX$. So, the condition (b) of Proposition~\ref{propSec4.3}
is equivalent to the condition that for each $j\in\{1,...,s\}$
there exists $\varphi_j \in(\omega_{R_\bbX})_{-d}$ such that
$I_{\bbX'_j/\bbX}\cdot \varphi_j \ne \langle 0\rangle$.
This is in turn equivalent to that there exists
$\varphi\in(\omega_{R_\bbX})_{-d}$ such that
$I_{\bbX'_j/\bbX}\cdot \varphi \ne \langle 0\rangle$
for $j=1,...,s$, since the base field $K$ is infinite,
and this condition is exactly the right condition to make
$\Ann_{R_\bbX}(\varphi) = \langle0\rangle$.
\end{proof}

\begin{remark}
This corollary is a generalization of a result
for the case $d=r_\bbX-1$ found in~\cite[Proposition~4.12]{KL}.
Moreover, the hypothesis in the corollary that $K$ is infinite
is necessary (cf.~\cite[Example~4.14]{KL}).
\end{remark}

Now let us apply the above results to look at the
Hilbert function of the Dedekind different of~$\bbX$.
For this purpose, we assume, in what follows, that
$\bbX$ is locally Gorenstein, and we let $L_0 =K[x_0,x_0^{-1}]$.
The homogeneous ring of quotients of $R_\bbX$ is
$Q^h(R_\bbX)\cong\prod_{j=1}^s\calO_{\bbX,p_j}[T_j,T_j^{-1}]$.
According to \cite[Proposition~3.3]{KL}, the graded algebra
$Q^h(R_\bbX)/L_0$ has a homogeneous trace map $\sigma$
of degree zero, i.e.,
$\sigma\in (\underline{\Hom}_{L_0}(Q^h(R_\bbX),L_0))_0$
satisfies $\underline{\Hom}_{L_0}(Q^h(R_\bbX),L_0)
= Q^h(R_\bbX)\cdot\sigma$.
Thus there is an injective homomorphism of graded $R_\bbX$-modules
\[
\begin{aligned}
\Phi: \omega_{R_\bbX}(1) &
\lhook\joinrel\longrightarrow \underline{\Hom}_{L_0}(Q^h(R_\bbX),L_0)
= Q^h(R_\bbX)\cdot\sigma \xrightarrow{\sim} Q^h(R_\bbX)\\
\varphi &\longmapsto \varphi\otimes \mathrm{id}_{L_0}
\end{aligned}
\]
The image of~$\Phi$ is a homogeneous fractional
$R_\bbX$-ideal~$\fC^{\sigma}_\bbX$ of~$Q^h(R_\bbX)$.
It is also a finitely generated graded $R_\bbX$-module and
$$
\HF_{\fC^{\sigma}_{\bbX}}(i) = \deg(\bbX) - \HF_{\bbX}(-i-1)
\quad \mbox{ for all } i\in\bbZ.
$$

\begin{definition}
The $R$-module $\fC^{\sigma}_{\bbX}$
is called the \textbf{Dedekind complementary module} of~$\bbX$
with respect to~$\sigma$. Its inverse,
$$
\delta^{\sigma}_\bbX = (\fC^{\sigma}_\bbX)^{-1}
= \{\, f \in Q^h(R_\bbX) \, \mid \,
f \cdot \fC^{\sigma}_\bbX \subseteq R_\bbX \, \},
$$
is called the \textbf{Dedekind different} of~$\bbX$
with respect to~$\sigma$.
\end{definition}

The following basic properties of the Dedekind different
of~$\bbX$ are shown in \cite[Proposition~3.7]{KL}.

\begin{proposition}\label{propSec4.7}
Let~$\sigma$ be a trace map of $Q^h(R_\bbX)/L_0$.

\begin{enumerate}
  \item[(a)]
  The Dedekind different $\delta^{\sigma}_\bbX$ is
  a homogeneous ideal of~$R_\bbX$ and $x_0^{2r_{\bbX}}
  \in \delta^{\sigma}_\bbX$.

  \item[(b)]
  The Hilbert function of~$\delta^{\sigma}_\bbX$ satisfies
   $\HF_{\delta^{\sigma}_\bbX}(i) = 0$ for $i < 0$,
   $\HF_{\delta^{\sigma}_\bbX}(i) = \deg(\bbX)$ for
   $i \ge 2r_{\bbX}$, and
   $0 \le \HF_{\delta^{\sigma}_\bbX}(0) \le \cdots
   \le \HF_{\delta^{\sigma}_\bbX}(2r_{\bbX}) = \deg(\bbX)$.
   In particular, the regularity index of~$\delta^{\sigma}_\bbX$
   satisfies $r_{\bbX} \le \ri(\delta^{\sigma}_\bbX)
   \le 2r_{\bbX}$.
\end{enumerate}
\end{proposition}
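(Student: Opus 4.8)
The plan is to deduce everything from the explicit Hilbert function $\HF_{\fC^\sigma_\bbX}(i)=\deg(\bbX)-\HF_\bbX(-i-1)$ recorded above, combined with two elementary facts about the homogeneous ring of quotients $Q^h(R_\bbX)\cong\prod_{j=1}^s\calO_{\bbX,p_j}[T_j,T_j^{-1}]$: each graded piece $Q^h(R_\bbX)_i$ has $K$-dimension $\deg(\bbX)$, and the canonical inclusion $(R_\bbX)_i\hookrightarrow Q^h(R_\bbX)_i$ is an isomorphism once $i\ge r_\bbX$ (cf.~\cite[Lemma~1.1]{Kre}). Since $\fC^\sigma_\bbX$ is a graded $R_\bbX$-submodule of $Q^h(R_\bbX)$, a dimension comparison gives immediately $(\fC^\sigma_\bbX)_i=Q^h(R_\bbX)_i$ for all $i\ge 0$ (there $\HF_\bbX(-i-1)=0$) and $(\fC^\sigma_\bbX)_i=\langle 0\rangle$ for all $i<-r_\bbX$ (there $\HF_\bbX(-i-1)=\deg(\bbX)$). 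I would prove both parts from these two identities.

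For part~(a): the first identity shows $1\in(\fC^\sigma_\bbX)_0$, i.e.\ $R_\bbX\subseteq\fC^\sigma_\bbX$. Hence any $f\in\delta^\sigma_\bbX=(\fC^\sigma_\bbX)^{-1}$ satisfies $f=f\cdot 1\in f\cdot\fC^\sigma_\bbX\subseteq R_\bbX$, so $\delta^\sigma_\bbX\subseteq R_\bbX$; it is closed under addition, closed under multiplication by elements of $R_\bbX$ (if $f\fC^\sigma_\bbX\subseteq R_\bbX$ and $r\in R_\bbX$ then $(rf)\fC^\sigma_\bbX\subseteq rR_\bbX\subseteq R_\bbX$), and graded because $\fC^\sigma_\bbX$ is; thus it is a homogeneous ideal of $R_\bbX$. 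To show $x_0^{2r_\bbX}\in\delta^\sigma_\bbX$, take a homogeneous $c\in(\fC^\sigma_\bbX)_i$: if $i<-r_\bbX$ then $c=0$, while if $i\ge-r_\bbX$ then $x_0^{2r_\bbX}c\in Q^h(R_\bbX)_{i+2r_\bbX}$ with $i+2r_\bbX\ge r_\bbX$, so $x_0^{2r_\bbX}c\in(R_\bbX)_{i+2r_\bbX}\subseteq R_\bbX$; hence $x_0^{2r_\bbX}\fC^\sigma_\bbX\subseteq R_\bbX$.

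For part~(b): $\HF_{\delta^\sigma_\bbX}(i)=0$ for $i<0$ is clear from $\delta^\sigma_\bbX\subseteq R_\bbX$. For $i\ge 2r_\bbX$ I would show $(\delta^\sigma_\bbX)_i=(R_\bbX)_i$: given $f\in(R_\bbX)_i$ and homogeneous $c\in(\fC^\sigma_\bbX)_k$, either $c=0$ (when $k<-r_\bbX$) or $fc\in Q^h(R_\bbX)_{i+k}$ with $i+k\ge 2r_\bbX-r_\bbX=r_\bbX$, so $fc\in(R_\bbX)_{i+k}\subseteq R_\bbX$; thus $f\in\delta^\sigma_\bbX$ and $\HF_{\delta^\sigma_\bbX}(i)=\HF_\bbX(i)=\deg(\bbX)$. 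Monotonicity of $\HF_{\delta^\sigma_\bbX}$ follows since $x_0$ is a non-zerodivisor on $R_\bbX$, so multiplication by $x_0$ injects $(\delta^\sigma_\bbX)_i$ into $(\delta^\sigma_\bbX)_{i+1}$; together with the two boundary values this yields the chain $0\le\HF_{\delta^\sigma_\bbX}(0)\le\cdots\le\HF_{\delta^\sigma_\bbX}(2r_\bbX)=\deg(\bbX)$. As $\HF_{\delta^\sigma_\bbX}$ is eventually the constant $\deg(\bbX)$, its Hilbert polynomial is that constant; hence $\ri(\delta^\sigma_\bbX)\le 2r_\bbX$, and $\ri(\delta^\sigma_\bbX)\ge r_\bbX$ because $\HF_{\delta^\sigma_\bbX}(r_\bbX-1)\le\HF_\bbX(r_\bbX-1)<\deg(\bbX)$ by the strict inequalities recorded in Section~\ref{Section2} (the case $r_\bbX=0$ being trivial, as then $\HF_{\delta^\sigma_\bbX}(-1)=0<\deg(\bbX)$).

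I do not anticipate a genuine obstacle here; the only point requiring slight care is obtaining the sharp constant $2r_\bbX$ (in both ``$x_0^{2r_\bbX}\in\delta^\sigma_\bbX$'' and in the stabilization of $\HF_{\delta^\sigma_\bbX}$) rather than a cruder bound like $3r_\bbX$. This is precisely where one must combine the vanishing of $\fC^\sigma_\bbX$ below degree $-r_\bbX$ with the fact that $Q^h(R_\bbX)$ coincides with $R_\bbX$ from degree $r_\bbX$ onward, instead of merely multiplying the known element $x_0^{2r_\bbX}$ into $R_\bbX$.
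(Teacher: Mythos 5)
Your argument is correct. The two structural facts you isolate --- that $\dim_K Q^h(R_\bbX)_i=\deg(\bbX)$ for every $i$, and that $(R_\bbX)_i\to Q^h(R_\bbX)_i$ is onto for $i\ge r_\bbX$ --- do combine with the stated Hilbert function of $\fC^\sigma_\bbX$ to give $(\fC^\sigma_\bbX)_i=Q^h(R_\bbX)_i$ for $i\ge 0$ and $(\fC^\sigma_\bbX)_i=\langle 0\rangle$ for $i<-r_\bbX$, and from there each claim of (a) and (b) follows exactly as you say; in particular you correctly get the sharp exponent $2r_\bbX$ by pairing the vanishing of $\fC^\sigma_\bbX$ below degree $-r_\bbX$ with the surjectivity of $R_\bbX\to Q^h(R_\bbX)$ from degree $r_\bbX$ on, and the lower bound $\ri(\delta^\sigma_\bbX)\ge r_\bbX$ from $\HF_{\delta^\sigma_\bbX}(r_\bbX-1)\le\HF_\bbX(r_\bbX-1)<\deg(\bbX)$. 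The one point of comparison worth noting is that the paper does not prove this proposition at all: it simply cites Proposition~3.7 of the reference \cite{KL}, where the statement is established in the source that constructs $\fC^\sigma_\bbX$ and $\delta^\sigma_\bbX$ in the first place. Your write-up is therefore a self-contained replacement for that citation, derived purely from the Hilbert function formula for $\fC^\sigma_\bbX$ recorded in Section~\ref{Section4} and from \cite[Lemma~1.1]{Kre}; this is essentially the same circle of ideas the cited source uses, so there is no mathematical divergence, only a difference in how much is left to the reference. If you wanted to be fully pedantic you could add one line justifying that $(\fC^\sigma_\bbX)^{-1}$ is a graded submodule of $Q^h(R_\bbX)$ (decompose $f$ into homogeneous components and compare degrees of $f_ic$ for homogeneous $c\in\fC^\sigma_\bbX$), but this is routine.
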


When $\bbX$ has CBP($d$), the Hilbert function of
the Dedekind different and its regularity index can be
described as follows. We use the notation
$\alpha_{\delta} = \min\{i\in\bbN
\mid (\delta^{\sigma}_\bbX)_i \ne \langle 0\rangle\}$.

\begin{proposition} \label{propSec4.8}
Let $K$ be an infinite field, let $\sigma$ be a trace map
of $Q^h(R_\bbX)/L_0$, and suppose that $\bbX$ has CBP($d$)
with $0 \le d \le r_\bbX-1$.
\begin{enumerate}
  \item[(a)]
  We have $d+1 \le \alpha_{\delta} \le 2r_{\bbX}$
  and $\HF_{\delta^{\sigma}_\bbX}(i) \le \HF_{\bbX}(i-d-1)$ for all $i\in\bbZ$.

  \item[(b)]
  Let $i_0$ be the smallest number such that
   $\HF_{\delta^{\sigma}_\bbX}(i_0) = \HF_{\bbX}(i_0-d-1) > 0$.
   Then we have $\HF_{\delta^{\sigma}_\bbX}(i)=\HF_{\bbX}(i-d-1)$
   for all $i \ge i_0$ and
   $$
   \ri(\delta^{\sigma}_\bbX) = \max\big\{\, i_0, r_\bbX+d+1 \,\big\}.
   $$
\end{enumerate}
\end{proposition}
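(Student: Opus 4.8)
The plan is to build everything on the key identification coming from Proposition~\ref{propSec4.3}: since $\bbX$ has CBP($d$), we have $\Ann_{R_\bbX}((\omega_{R_\bbX})_{-d}) = \langle 0\rangle$, and because $K$ is infinite and $\bbX$ is locally Gorenstein, Corollary~\ref{corSec4.4} produces a single homogeneous element $\varphi \in (\omega_{R_\bbX})_{-d}$ with $\Ann_{R_\bbX}(\varphi) = \langle 0\rangle$. Transporting $\varphi$ through the isomorphism $\Phi: \omega_{R_\bbX}(1) \hookrightarrow Q^h(R_\bbX)$, we obtain an element $c := \Phi(\varphi) \in (\fC^\sigma_\bbX)_{-d-1}$ which is a non-zerodivisor in $Q^h(R_\bbX)$ (its annihilator in $R_\bbX$ is zero and $R_\bbX$ is Cohen-Macaulay of dimension one, so $c$ has full support). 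This is the workhorse of both parts.

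For part~(a), first I would establish the lower bound $\alpha_\delta \ge d+1$. Suppose $f \in (\delta^\sigma_\bbX)_i \setminus \{0\}$ with $i \le d$; then $f \cdot \fC^\sigma_\bbX \subseteq R_\bbX$, so in particular $f \cdot c \in (R_\bbX)_{i-d-1}$, which forces $f\cdot c = 0$ since $i - d - 1 < 0$ and $(R_\bbX)_{j} = \langle 0\rangle$ for $j<0$; as $c$ is a non-zerodivisor this gives $f = 0$, a contradiction. Hence $(\delta^\sigma_\bbX)_i = \langle 0\rangle$ for $i \le d$, i.e. $\alpha_\delta \ge d+1$; the upper bound $\alpha_\delta \le 2r_\bbX$ is immediate from Proposition~\ref{propSec4.7}(a). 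For the Hilbert function inequality, I would exhibit an injective $K$-linear map $(\delta^\sigma_\bbX)_i \hookrightarrow (R_\bbX)_{i-d-1}$ for every $i$: multiplication by $c$ sends $(\delta^\sigma_\bbX)_i$ into $(R_\bbX)_{i-d-1}$ (by definition of the inverse ideal together with $c \in (\fC^\sigma_\bbX)_{-d-1}$), and it is injective because $c$ is a non-zerodivisor. Therefore $\HF_{\delta^\sigma_\bbX}(i) \le \HF_\bbX(i-d-1)$ for all $i \in \bbZ$.

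For part~(b), the multiplication map $\cdot c : \delta^\sigma_\bbX \to R_\bbX(-d-1)$ is an injective homomorphism of graded $R_\bbX$-modules whose cokernel $N$ is a finitely generated graded $R_\bbX$-module with $\HF_N(i) = \HF_\bbX(i-d-1) - \HF_{\delta^\sigma_\bbX}(i) \ge 0$. The point is that $N$ is a \emph{torsion} $R_\bbX$-module: since $c \cdot x_0^{2r_\bbX} \in x_0^{2r_\bbX}\fC^\sigma_\bbX \subseteq R_\bbX$ lies in $\delta^\sigma_\bbX$ and is a non-zerodivisor, the image of $\cdot c$ contains a non-zerodivisor, so $N$ is annihilated by a power of $x_0$; hence $\HF_N(i) = 0$ for $i \gg 0$, and once $\HF_N$ hits $0$ it stays $0$ because multiplication by $x_0$ on $N$ is eventually surjective degree by degree (a standard fact: for a finitely generated graded module over the one-dimensional ring $R_\bbX$ with $x_0$ a parameter, $\HF_N$ is non-increasing past its regularity index, and here $N$ has finite length so $\HF_N$ is eventually $0$ and non-increasing once it starts decreasing). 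Concretely: if $\HF_N(i_0) = 0$, then $(R_\bbX(-d-1)/{\rm im})_{i_0} = 0$, so $\cdot c$ is surjective in degree $i_0$; multiplying by $x_0$ and using that $(R_\bbX)_{i_0+1} = x_0 \cdot (R_\bbX)_{i_0}$ for $i_0 \ge r_\bbX-1$ while in lower degrees one argues directly, we get surjectivity in degree $i_0+1$, hence $\HF_N(i) = 0$ for all $i \ge i_0$. Translating back, $\HF_{\delta^\sigma_\bbX}(i) = \HF_\bbX(i-d-1)$ for all $i \ge i_0$. Finally, $\ri(\delta^\sigma_\bbX)$ is the last degree where $\HF_{\delta^\sigma_\bbX}$ fails to equal its Hilbert polynomial, which is the constant $\deg(\bbX)$; for $i \ge i_0$ we have $\HF_{\delta^\sigma_\bbX}(i) = \HF_\bbX(i-d-1)$, and this equals $\deg(\bbX)$ exactly when $i - d - 1 \ge r_\bbX$, i.e. $i \ge r_\bbX + d + 1$; combining with the fact that $\HF_{\delta^\sigma_\bbX}(i) < \deg(\bbX)$ whenever $i < \max\{i_0, r_\bbX+d+1\}$ (for $i<i_0$ because $\HF_{\delta^\sigma_\bbX}(i) \le \HF_\bbX(i-d-1) \le \deg(\bbX)$ with a strict gap somewhere, and for $i_0 \le i < r_\bbX+d+1$ because then $\HF_\bbX(i-d-1) < \deg(\bbX)$) yields $\ri(\delta^\sigma_\bbX) = \max\{i_0, r_\bbX + d + 1\}$.

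The main obstacle I anticipate is the "stabilization" step in part~(b) — proving cleanly that once $\HF_{\delta^\sigma_\bbX}(i) = \HF_\bbX(i-d-1) > 0$ for one value $i = i_0$, equality persists for all larger $i$. The slick way is to identify the cokernel $N$ of $\cdot c$ as a finite-length graded module and invoke that its Hilbert function, being the Hilbert function of a module over $K[x_0] = R_\bbX/(\text{non-zerodivisor relations})$ pushed forward, is non-increasing beyond $\max\{r_\bbX, r_\bbX + d + 1 - (d+1)\}$ and vanishes for $i \gg 0$; one must be a little careful that the degree where $N$'s Hilbert function could still be increasing does not exceed $i_0$, which follows because below $r_\bbX$ both $\HF_{\delta^\sigma_\bbX}$ and $\HF_\bbX(\cdot - d - 1)$ are themselves increasing in a controlled way. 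I would phrase this via the exact sequence $0 \to \delta^\sigma_\bbX \xrightarrow{\cdot c} R_\bbX(-d-1) \to N \to 0$ and a direct degree-by-degree surjectivity argument using $x_0$, as sketched above, rather than quoting a black-box regularity statement, since the bookkeeping with the shift $(-d-1)$ is the only genuinely delicate part.
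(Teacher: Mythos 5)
Your overall strategy coincides with the paper's: produce $c\in(\fC^\sigma_\bbX)_{-d-1}$ with $\Ann_{R_\bbX}(c)=\langle 0\rangle$ via Corollary~\ref{corSec4.4}, embed $\delta^\sigma_\bbX$ into $R_\bbX(-d-1)$ by multiplication by $c$ (the paper clears denominators and works with $\widetilde g=x_0^{r_\bbX+d+1}c\in(R_\bbX)_{r_\bbX}$, a purely cosmetic difference), and then propagate the equality $\HF_{\delta^\sigma_\bbX}(i)=\HF_\bbX(i-d-1)$ upward from $i_0$. Part (a) and the final two-case computation of $\ri(\delta^\sigma_\bbX)$ are fine as you give them.

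The one genuine soft spot is your justification of the stabilization step in (b). The ``standard fact'' you invoke --- that a finite-length graded module has non-increasing Hilbert function once it starts decreasing --- is false for general finite-length modules (e.g.\ $K\oplus K(-2)$ has Hilbert function $1,0,1,0,\dots$), and your concrete argument, surjectivity in degree $i_0+1$ obtained by ``multiplying by $x_0$'', only works once $i_0-d-1\ge r_\bbX$, where $(R_\bbX)_{i_0-d}=x_0(R_\bbX)_{i_0-d-1}$; the interesting range is precisely $i_0<r_\bbX+d+1$, which you defer to ``argue directly in lower degrees'' without carrying it out. What saves the argument --- and is exactly what the paper does --- is that the cokernel $N$ of $\cdot c:\delta^\sigma_\bbX\to R_\bbX(-d-1)$ is a quotient of the \emph{cyclic} module $R_\bbX(-d-1)$, generated in degree $d+1\le i_0$, so $N_{i_0+1}=(R_\bbX)_1\cdot N_{i_0}=\langle 0\rangle$. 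Concretely: write $f\in(R_\bbX)_{i_0-d}$ as $f=x_0g_0+\cdots+x_ng_n$ with $g_k\in(R_\bbX)_{i_0-d-1}$, lift each $g_k$ to $h_k\in(\delta^\sigma_\bbX)_{i_0}$ using surjectivity in degree $i_0$, and observe that $x_0h_0+\cdots+x_nh_n\in(\delta^\sigma_\bbX)_{i_0+1}$ because $\delta^\sigma_\bbX$ is an ideal of $R_\bbX$. With that replacement the proof is complete and agrees with the paper's.
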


\begin{proof}
Since $\fC_\bbX^\sigma \cong \omega_{R_\bbX}(1)$,
Corollary~\ref{corSec4.4} implies that there is
$g\in (\fC_\bbX^\sigma)_{-d-1}$ such that
$\Ann_{R_\bbX}(g)=\langle 0\rangle$. Notice that
$x_0$ is a non-zerodivisor of $R_\bbX$.
Then we find a non-zerodivisor $\widetilde{g}\in (R_\bbX)_{r_{\bbX}}$
such that $g=x_0^{-r_{\bbX}-d-1}\widetilde{g}$
by \cite[Proposition~3.7]{KL}.  Observe  that
$\widetilde{g}\cdot (\delta^{\sigma}_\bbX)_i
\subseteq x_0^{r_{\bbX}+d+1}(R_\bbX)_{i-d-1}$.
This implies $(\delta^{\sigma}_\bbX)_i = \langle 0\rangle$
for $i \le d$, and so $d+1 \le \alpha_{\delta}$.
Moreover, for all $i\in \bbZ$, we have
$$
\begin{aligned}
\HF_{\delta^{\sigma}_\bbX}(i) &= \dim_K(\delta^{\sigma}_\bbX)_i
= \dim_K (\widetilde{g}\cdot\delta^{\sigma}_\bbX)_i) \\
& \le \dim_K (x_0^{r_{\bbX}+d+1}(R_\bbX)_{i-d-1})
= \HF_{\bbX}(i-d-1).
\end{aligned}
$$
Thus claim~(a) is completely proved.

Now we prove claim~(b).
Clearly, we have $d+1 \le i_0 \le 2r_{\bbX}$.
By induction, we only need to show that
$\HF_{\delta^{\sigma}_\bbX}(i_0+1)=\HF_{\bbX}(i_0-d)>0$.
Let $f \in (R_\bbX)_{i_0-d}\setminus\{0\}$.
There are $g_0, \dots, g_n \in (R_\bbX)_{i_0-d-1}$
such that $f= x_0g_0 + x_1g_1 +\cdots+x_ng_n$.
By assumption, we have
$\widetilde{g}\cdot(\delta^{\sigma}_\bbX)_{i_0}
= x_0^{r_{\bbX}+d+1}(R_\bbX)_{i_0-d-1}$.
This enables us to write
$x_0^{r_{\bbX}+d+1}g_j = \widetilde{g}h_j$ for some
$h_j\in(\delta^{\sigma}_\bbX)_{i_0}$, where $j\in\{0,\dots,n\}$.
Thus we have
$$
\begin{aligned}
x_0^{r_{\bbX}+d+1}f
&= x_0^{r_{\bbX}+d+1} (x_0g_0 + x_1g_1 + \cdots + x_ng_n)
 = x_0\widetilde{g}h_0+x_1\widetilde{g}h_1+\cdots+x_n\widetilde{g}h_n \\
&= \widetilde{g} (x_0h_0 + x_1h_1 + \cdots + x_nh_n)
\end{aligned}
$$
and so $x_0^{r_{\bbX}+d+1}f \in
\widetilde{g}\cdot(\delta^{\sigma}_\bbX)_{i_0+1}$.
Hence $x_0^{r_{\bbX}+d+1}(R_\bbX)_{i_0-d} \!=\!
\widetilde{g}\cdot(\delta^{\sigma}_\bbX)_{i_0+1}$.
In other words, we get
$\HF_{\delta^{\sigma}_\bbX}(i_0+1)=\HF_{\bbX}(i_0-d)$.

Let $k = \max\big\{\, i_0, r_{\bbX}+d+1 \, \big\}$.
In order to prove the equality $\ri(\delta^{\sigma}_\bbX)=k$,
we consider the following two cases.

\medskip
\noindent\textbf{Case (1)}\quad Let $i_0 \ge r_{\bbX}+d+1$.
Then we have $k=i_0$. Observe that
$$
\deg(\bbX) \ge \HF_{\delta^{\sigma}_\bbX}(k) = \HF_{\bbX}(k-d-1)\ge
\HF_{\bbX}(r_{\bbX}) = \deg(\bbX).
$$
It follows that $\HF_{\delta^{\sigma}_\bbX}(k) = \deg(\bbX)$, and hence
$k \ge \ri(\delta^{\sigma}_\bbX)$.
Moreover, for $i < k=i_0$, we have $\HF_{\delta^{\sigma}_\bbX}(i)
< \HF_{\bbX}(i-d-1) \le \HF_{\bbX}(k-d-1) = \deg(\bbX)$.
Thus we get $\ri(\delta^{\sigma}_\bbX) = k$.

\medskip
\noindent\textbf{Case (2)}\quad Let $i_0 < r_{\bbX}+d+1$.
Then we have $k = r_{\bbX}+d+1$ and
$\HF_{\delta^{\sigma}_\bbX}(k) = \HF_{\bbX}(k-d-1)
= \HF_{\bbX}(r_{\bbX}) = \deg(\bbX)$.
This implies $k \ge \ri(\delta^{\sigma}_\bbX)$.
For $i < k$, we have $\HF_{\delta^{\sigma}_\bbX}(i) \le
\HF_{\bbX}(i-d-1) \le \HF_{\bbX}(r_{\bbX}-1) < \deg(\bbX)$.
Hence we obtain $\ri(\delta^{\sigma}_\bbX) = k$ again.
\end{proof}

In the special case that $\bbX$ is a locally Gorenstein
Cayley-Bacharach scheme, the regularity index
of the Dedekind different attains the maximal value.
This also follows from \cite[Proposition~4.8]{KLL}
with a different proof.

\begin{corollary}
In the setting of Proposition~\ref{propSec4.8},
assume that $\bbX$ is a Cayley-Bacharach scheme.
\begin{enumerate}
  \item[(a)] The regularity index of the Dedekind different
  $\delta^{\sigma}_\bbX$ is $2r_\bbX$.
  \item[(b)] The scheme $\bbX$ is arithmetically Gorenstein
  if and only if the Hilbert function
  of~$\delta^{\sigma}_\bbX$   satisfies
  $\HF_{\delta^{\sigma}_\bbX}(i)=\HF_{\bbX}(i-r_\bbX)$ for all
  $i\in\bbZ$.
\end{enumerate}
\end{corollary}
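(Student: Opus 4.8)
The plan is to reduce everything to Proposition~\ref{propSec4.8} applied with $d=r_\bbX-1$, which is legitimate since a Cayley-Bacharach scheme is precisely one satisfying CBP($r_\bbX-1$); note that then $r_\bbX+d+1=2r_\bbX$. From the proof of that proposition I would also retain the distinguished non-zerodivisor $\widetilde{g}\in(R_\bbX)_{r_\bbX}$ together with the fact that $\widetilde{g}\cdot(\delta^{\sigma}_\bbX)_i\subseteq x_0^{2r_\bbX}(R_\bbX)_{i-r_\bbX}$ for every $i\in\bbZ$. Part~(a) then follows by combining two bounds already at hand: Proposition~\ref{propSec4.7}(b) gives $\ri(\delta^{\sigma}_\bbX)\le 2r_\bbX$, while Proposition~\ref{propSec4.8}(b) gives $\ri(\delta^{\sigma}_\bbX)=\max\{i_0,2r_\bbX\}\ge 2r_\bbX$, so $\ri(\delta^{\sigma}_\bbX)=2r_\bbX$. (Equivalently, for $i<2r_\bbX$ part~(a) of Proposition~\ref{propSec4.8} yields $\HF_{\delta^{\sigma}_\bbX}(i)\le\HF_\bbX(i-r_\bbX)\le\HF_\bbX(r_\bbX-1)<\deg(\bbX)$, while $\HF_{\delta^{\sigma}_\bbX}(2r_\bbX)=\deg(\bbX)$ by Proposition~\ref{propSec4.7}(b).)

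For the ``only if'' direction of part~(b) I would argue structurally. If $\bbX$ is arithmetically Gorenstein, then $R_\bbX$ is a $1$-dimensional graded Gorenstein ring whose $a$-invariant equals $r_\bbX-1$, so $\omega_{R_\bbX}\cong R_\bbX(r_\bbX-1)$ and hence $\fC^{\sigma}_\bbX\cong\omega_{R_\bbX}(1)\cong R_\bbX(r_\bbX)$. Since $\fC^{\sigma}_\bbX$ is a homogeneous fractional $R_\bbX$-ideal of $Q^h(R_\bbX)$, this isomorphism forces $\fC^{\sigma}_\bbX=R_\bbX\cdot\theta$ for a homogeneous element $\theta\in Q^h(R_\bbX)$ of degree $-r_\bbX$ that is a non-zerodivisor, hence a unit of $Q^h(R_\bbX)$. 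Consequently $\delta^{\sigma}_\bbX=(\fC^{\sigma}_\bbX)^{-1}=R_\bbX\cdot\theta^{-1}\cong R_\bbX(-r_\bbX)$, which gives $\HF_{\delta^{\sigma}_\bbX}(i)=\HF_\bbX(i-r_\bbX)$ for all $i\in\bbZ$.

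For the ``if'' direction of part~(b) I would exploit the inclusion recalled in the first paragraph. Assuming $\HF_{\delta^{\sigma}_\bbX}(i)=\HF_\bbX(i-r_\bbX)$ for all $i$, and using that $\widetilde{g}$ and $x_0$ are non-zerodivisors of $R_\bbX$, the subspaces $\widetilde{g}\cdot(\delta^{\sigma}_\bbX)_i$ and $x_0^{2r_\bbX}(R_\bbX)_{i-r_\bbX}$ have the same finite dimension in every degree, so the inclusion is an equality $\widetilde{g}\cdot\delta^{\sigma}_\bbX=x_0^{2r_\bbX}R_\bbX$. Hence $\delta^{\sigma}_\bbX=R_\bbX\cdot(x_0^{2r_\bbX}\widetilde{g}^{-1})$ and its inverse $\fC^{\sigma}_\bbX=R_\bbX\cdot(x_0^{-2r_\bbX}\widetilde{g})$ are principal homogeneous fractional ideals generated by units of $Q^h(R_\bbX)$; since $\fC^{\sigma}_\bbX\cong\omega_{R_\bbX}(1)$, the canonical module $\omega_{R_\bbX}$ is then a free $R_\bbX$-module of rank one, whence $R_\bbX$ is Gorenstein and $\bbX$ is arithmetically Gorenstein. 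The step I expect to be the main obstacle is precisely this last one: converting the Hilbert-function equality into the arithmetically Gorenstein property. It requires returning to the proof of Proposition~\ref{propSec4.8} to pin down the correct non-zerodivisor, promoting the containment $\widetilde{g}\cdot\delta^{\sigma}_\bbX\subseteq x_0^{2r_\bbX}R_\bbX$ to an equality of fractional ideals, and verifying the essentially formal fact that a principal Dedekind complementary module generated by a unit of $Q^h(R_\bbX)$ is equivalent to $R_\bbX$ being Gorenstein.
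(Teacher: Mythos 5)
Your proof is correct in substance, and part (a) coincides with what the paper does (it simply says ``follows directly from the proposition'': with $d=r_\bbX-1$ one has $r_\bbX+d+1=2r_\bbX$, so $\ri(\delta^{\sigma}_\bbX)=\max\{i_0,2r_\bbX\}=2r_\bbX$ by Propositions~\ref{propSec4.7}(b) and~\ref{propSec4.8}(b)). For part (b) you genuinely diverge from the paper: the authors dispose of it with a citation to \cite[Proposition~5.8]{KL}, whereas you reprove the equivalence from scratch via the structure of $\fC^{\sigma}_\bbX$. Your ``only if'' direction ($\omega_{R_\bbX}\cong R_\bbX(r_\bbX-1)$, hence $\fC^{\sigma}_\bbX=R_\bbX\theta$ with $\theta$ a unit of $Q^h(R_\bbX)$ of degree $-r_\bbX$, hence $\delta^{\sigma}_\bbX=\theta^{-1}R_\bbX$) is sound, the only point worth spelling out being that a homogeneous generator with trivial annihilator in $R_\bbX$ is indeed a unit of $Q^h(R_\bbX)$ because $x_0^{2r_\bbX}\theta\in R_\bbX$ is then a non-zerodivisor. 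The one step in the ``if'' direction that needs repair is the phrase ``its inverse $\fC^{\sigma}_\bbX=R_\bbX\cdot(x_0^{-2r_\bbX}\widetilde g)$'': double inversion of a fractional ideal need not recover the original ideal, so you cannot pass from $\delta^{\sigma}_\bbX=(x_0^{-2r_\bbX}\widetilde g)^{-1}R_\bbX$ to $\fC^{\sigma}_\bbX$ by formally inverting again. The fix is already in your own setup: the element $g=x_0^{-2r_\bbX}\widetilde g$ was chosen in $(\fC^{\sigma}_\bbX)_{-r_\bbX}$ in the proof of Proposition~\ref{propSec4.8}, which gives $gR_\bbX\subseteq\fC^{\sigma}_\bbX$, while $g^{-1}\in\delta^{\sigma}_\bbX=(\fC^{\sigma}_\bbX)^{-1}$ gives $g^{-1}\fC^{\sigma}_\bbX\subseteq R_\bbX$, i.e.\ $\fC^{\sigma}_\bbX\subseteq gR_\bbX$; hence $\fC^{\sigma}_\bbX=gR_\bbX$ and $\omega_{R_\bbX}\cong R_\bbX(r_\bbX-1)$ is free, so $R_\bbX$ is Gorenstein. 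With that adjustment your argument is a complete, self-contained substitute for the external reference, at the cost of being longer than the paper's one-line citation.
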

\begin{proof}
Claim (a) follows directly from the proposition,
and claim (b) follows by \cite[Proposition~5.8]{KL}.
\end{proof}

\bigbreak
\subsection*{Acknowledgments.}
The second author thanks the University of Passau for its hospitality and support during part of the preparation of
this paper. The third and fourth authors
would also like to acknowledge the support from
the Vietnam National Foundation (TN-8).

\bigbreak

\end{document}